\tikzset{%
	symbol/.style={%
		draw=none,
		every to/.append style={%
			edge node={node [sloped, allow upside down, auto=false]{$#1$}}}
	}
}
\newtheorem{Theorem}{Theorem}
\newtheorem{proposition}[Theorem]{Proposition}
\newtheorem{lemma}[Theorem]{Lemma}
\newtheorem{corollary}[Theorem]{Corollary}
\theoremstyle{definition}
\newtheorem{remark}[Theorem]{Remark}
\newtheorem{definition}[Theorem]{Definition}
\DeclareMathOperator{\Abb}{\mathbb{A}}
\DeclareMathOperator{\Fbb}{\mathbb{F}}
\DeclareMathOperator{\Gbb}{\mathbb{G}}
\DeclareMathOperator{\Acal}{\mathcal{A}}
\DeclareMathOperator{\Gcal}{\mathcal{G}}
\DeclareMathOperator{\Ocal}{\mathcal{O}}
\DeclareMathOperator{\mfrak}{\mathfrak{m}}
\DeclareMathOperator{\pfrak}{\mathfrak{p}}
\DeclareMathOperator{\qfrak}{\mathfrak{q}}
\DeclareMathOperator{\id}{id}
\DeclareMathOperator{\Ker}{Ker}
\DeclareMathOperator{\SSS}{\mathbb{S}}
\DeclareMathOperator{\Set}{\mathbf{Set}}
\DeclareMathOperator{\C}{\mathbb{C}}
\DeclareMathOperator{\Z}{\mathbb{Z}}
\DeclareMathOperator{\N}{\mathbb{N}}
\DeclareMathOperator{\Gal}{Gal}
\DeclareMathOperator{\Ab}{\mathbf{Ab}}
\DeclareMathOperator{\Grp}{\mathbf{Grp}}
\DeclareMathOperator{\Q}{\mathbb{Q}}
\DeclareMathOperator{\Ext}{Ext}
\DeclareMathOperator{\cd}{cd}
\DeclareMathOperator{\CalO}{\mathcal{O}}
\DeclareMathOperator{\Spec}{Spec}
\DeclareMathOperator{\Cat}{\mathbf{Cat}}
\DeclareMathOperator{\Top}{\mathbf{Top}}
\DeclareMathOperator{\Coker}{Coker}
\DeclareMathOperator{\Fp}{\mathbb{F}_{\mathnormal{p}}}
\DeclareMathOperator{\A}{\mathbb{A}}
\DeclareMathOperator{\op}{op}
\DeclareMathOperator{\Iner}{I}
\DeclareMathOperator{\G}{\mathbb{G}}
\DeclareMathOperator{\QAGrp}{\mathbf{QAlgGrp}}
\DeclareMathOperator{\PAGrp}{\mathbf{ProAlgGrp}}
\DeclareMathOperator{\GL}{GL}
\DeclareMathOperator{\ProfAGrp}{\mathbf{ProAb}}
\DeclareMathOperator{\Frob}{Frob}
\let\epsilon\varepsilon
\newcommand{\pullbackcorner}[1][dr]{\save*!/#1-1.2pc/#1:(-1,1)@^{|-}\restore}
\newcommand{\Galf}[0]{Galf}
\numberwithin{Theorem}{section}
\numberwithin{equation}{section}
\title{Serre-Hazewinkel Local Class Field Theory and a Geometric Proof of the Local Langlands Correspondence for $\GL(1)$}
\author{Geoff Vooys}
\email{gmvooys@ucalgary.ca}
\date{\today}
\subjclass{Primary 11S31; Secondary 14L40, 11F70}
\begin{document}

\maketitle
\begin{abstract}
In this expository paper we provide a geometric proof of the local Langlands Correspondence for the groups $\operatorname{GL}_{1}$ defined over $p$-adic fields $K$. We do this by redeveloping the theory of proalgebraic groups and use this to derive local class field theory in the style of Serre and Hazewinkel. In particular, we show that the local class field theory of Serre and Hazewinkel is valid for both equal characteristic and mixed characteristic ultrametric local fields. Finally, we use this to prove an equivalence of the categories of smooth representations of $K^{\ast}$ with continuous representations of $W_K^{\text{Ab}}$ in order to deduce the Local Langlands Correspondence for $\operatorname{GL}_{1,K}$.
\end{abstract}
\section{Introduction}
Local Class Field Theory is a fundamental area of number theory, representation theory, Galois cohomology, and arithmetic geometry. In fact, the Local Langlands correspondence for $p$-adic $\GL(1)$ is essentially given by local class field theory for $p$-adic fields (cf.\@ Theorem \ref{Cor: LLC for padic GL1}). In this article we seek to give a basic introduction to local class field theory; we will do so from a different perspective than the ultrametric analytic and algebraic number theoretic proofs from which local class field theory is usually derived (cf.\@ \cite{NeukirchANT}, for example, on such a development of local class field theory). Instead, we will deduce local class field theory from the theory of proalgebraic groups of Serre (cf.\@ \cite{SerreProAlgGrp} and \cite{SerreCorps}) and the further development of the theory of proalgebraic groups of Hazewinkel (as summarized in \@ \cite{DemazureGabriel}). In presenting this, we follow also the combined exposition of \cite{SuzYosh} to some degree.

The biggest reason we derive local class field theory in this way is to provide a schematic background to the theory. By providing this geometry to the problem, we can also introduce geometric methods to the study of local class field theory and show how the resulting field theory is induced from the study of (pro)algebraic groups; this will also have the benefit of explaining that the Serre-Hazewinkel proalgebraic approach to local class field theory is valid not just for equal characteristic local fields, but also in the mixed characteristic (i.e., $p$-adic) case as well. The final reason we derive local class field theory in this way is to  introduce the theory of proalgebraic groups to a modern audience, as proalgebraic groups have seen use and development in artimetic geometry, differential Galois Theory, model theory, and representation theory; cf.\@ \cite{FreeProalg}, \cite{ModelTheoryProAlg}, \cite{DiffGaloisRatFun}, and \cite{MasoudTravis} for examples and instances of these applications of proalgberaic groups.

We will now fix some notation. Let $K$ be a complete discrete valuation field (with valuation $v$) of characteristic 0, let $R$ be its valution ring, i.e., its local ring of integers, and let $k = R/\mathfrak{m}$ be its (perfect) residue field of characteristic $p > 0$. Set $K^{{\text{Ab}}}$ be the maximal Abelian extension of $K$. Moreover, we fix $U_K := \lbrace x \in K \; | \; v(x) = 0 \rbrace$ as the group of valuative units of $K$. Classical local class field theory states that if $k = \mathbb{F}_{q}$ for $q = p^n$ then there is a canonical group homomorphism
\[
K^{\ast} \to \Gal(K^{{\text{Ab}}}/K)
\]
that induces a commuting diagram
\[
\xymatrix{
0  \ar[r] & U_k \ar[r] \ar[d]_{\cong} & K^{\ast} \ar[r] \ar[d] & \Z\ar[r] \ar[d] & 0 \\
0 \ar[r] & \Iner(K^{{\text{Ab}}}/K) \ar[r] & \Gal(K^{{\text{Ab}}}/K) \ar[r] & \Gal(\overline{k}/k) \ar[r] & 0
}
\]
with exact rows in $\Ab$, where $\Iner(K^{\text{Ab}}/K)$ is the inertia group  of $K^{\text{Ab}}/K$; it is also worth remarking that there is an isomorphism
\[
\Iner(K^{\text{Ab}}/K) \cong \Iner(\overline{K}/K)^{\text{Ab}}.
\] 
Serre later extended this persepective to the case in which $k$ is any algebraically closed field of characteristic $p$ (such as $k = \overline{\Fbb_p(t)}$, for instance), and then Hazewinkel extended it further to the case in which $k$ is a perfect, but not necessarily algebraically closed, field of characteristic $p$. Suzuki and Yoshida then further generalized this to the case in which $k$ was a general perfect field, and we largely follow the three papers above in presenting this material. 

The idea behind this approach to LCFT is to make a fundamental group which relates the representation-theoretic side of things to the geometric side of things. In order to make this precise we must therefore construct a geometric framework in which we can introduce the coverings and fundamental groups. In particular, we will need to work with a certain class of group schemes over the residue field $k$ that will allow us to work appropriately. However, the groups we will work with will be infinite dimensional group schemes over $k$, so we will need to work with some sort of fundamental groups which are not, for instance, finite {\'e}tale. In particular, we will need a way to realize the group of units $U_K$ as a group scheme over $k$. However, the scheme we will attach to $U_K$ is what is called a proalgebraic group and is a limit of algebraic groups in a certain sense, so we can restrict our attention to a nice (Abelian) category of commutative group schemes.

\section{Background on QuasiAlgebraic Groups, Proalgebraic Groups, and The Proalgebraic Fundamental Group}
We begin by fixing a field $k$ of characteristic $p$. Because $k$ has characteristic $p$, there is a Frobenius endomorphism $\operatorname{Fr}(p):k \to k$ given by $x \mapsto x^p$. For any scheme $X$ over $\Spec k$, this characteristic $p$ structure implies that every structure ring $\CalO_X(U)$ of $X$ a $k$-algebra as well. This in turn implies that there is a Frobenius endomorphism $\operatorname{Fr}:X \to X$ induced by taking $x \mapsto x^p$ in each ring $\CalO_X(U)$ and acting as the identity on $\lvert X\rvert$; that this is compatible with the restriction maps is trivial to check. This determines a morphism
\[
\operatorname{Fr}:X \to X
\]
called the absolute Frobenius of $X$. In particular, if $k$ be a field of characteristic $p > 0$ a scheme $X$ over $\Spec k$ is said to be {\em perfect} whenever the absolute Frobenius $\operatorname{Fr}:X \to X$ is an automorphism.

One issue that arises when working with arbitrary schemes over a finite field $\Fbb_q$ is that they are not always perfect. For instance, for any algebraic field extension $L/\Fp$, the group scheme $\G_{a,L} = \Abb^{1}_{L}$ is not perfect because $x$ has no $p^{\text{th}}$ root in $L[x]$. Perhaps more worryingly, even some reductive groups, like $\G_{m,L}$ need not be perfect, as neither $x$ {\em nor} $y$ has a $p^{\text{th}}$ root in $L[x,y]/(xy-1)$. Thus we will proceed by introducing a category of group schemes over the finite field $k$ which arise by taking the perfection of commutative group varieties over $k$ in the standard pro-way, i.e., by adding all $p^{n}$-th roots of elements of the structure sheaves.

Begin by fixing a group variety $G$ over $\Spec k$. Now, for every $n \in \N$, define the scheme $G^{(n)}$ by declaring that as a locally ringed space
\[
\lvert G^{(n)} \rvert := \lvert G \rvert,
\]
and replacing the sheaf $\CalO_{G}$ by the sheaf induced by pushing $\CalO_G$ forward and then taking the pullback of the composite
\[
\xymatrix{
G \ar[r] \ar[dr]_{f} & \Spec k \ar[d]^{\Spec \operatorname{Fr}(p^n)} \\
 & \Spec k	
}
\]
where $\operatorname{Fr}(p^n)$ is the Frobenius automorphism of $\Spec k$ at $p^n$. Explicitly, $\CalO_{G^{(n)}} = f^{\ast}f_{\ast}\CalO_G$. There are then canonical morphisms
\[
G^{(n+k)} \to G^{(n)}
\]
for all $n, k \in \N$ that correspond to producing higher powers of the Frobenius endomorphism; note that $G^{(n)}$ is a group scheme whenever $G$ is a group variety. We then define the scheme $G^{(\infty)}$ via
\[
G^{(\infty)} := \lim_{\longleftarrow} G^{(n)}.
\]
The following lemma characterizes some of the important properties of $G^{(\infty)}$.
\begin{lemma}\label{Quasialgebraic group  is group scheme}
Let $G$ be a group variety over $\Spec k$ and let $G^{(\infty)}$ be the scheme constructed above. Then:
\begin{enumerate}
	\item The scheme $G^{(\infty)}$ is a group scheme;
	\item The scheme $G^{(\infty)}$ is perfect.
\end{enumerate}
\end{lemma}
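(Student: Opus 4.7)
For part (1), the plan is to first observe that each transition map $G^{(n+k)} \to G^{(n)}$ is a morphism of group schemes; this follows from the naturality of the Frobenius-twist construction, since the functor $(-)^{(n)}$ on $k$-schemes preserves finite products and so sends the group scheme diagrams defining the operations on $G$ to corresponding diagrams for $G^{(n)}$. Since these transition maps are affine---indeed, identities on underlying topological spaces, differing only by a morphism of structure sheaves---the cofiltered limit $G^{(\infty)} = \lim_{\longleftarrow} G^{(n)}$ exists in the category of $k$-schemes and is computed locally as spectra of filtered colimits of rings. The forgetful functor from commutative group schemes over $k$ to $k$-schemes creates these cofiltered limits, so $G^{(\infty)}$ inherits a natural group scheme structure from the compatible families of operations $(\mu_n)$, $(\iota_n)$, and $(e_n)$.

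For part (2), the plan is to identify the absolute Frobenius $\operatorname{Fr}\colon G^{(\infty)} \to G^{(\infty)}$ with a shift automorphism of the inverse system. The construction of $G^{(n)}$ already folds the $n$-th power of the Frobenius of $\Spec k$ into the structure sheaf, so the $p$-th power map on each $G^{(n)}$ factors compatibly through the transition map $G^{(n+1)} \to G^{(n)}$. Consequently, under the identification $G^{(\infty)} = \lim_{\longleftarrow} G^{(n)}$, the Frobenius of $G^{(\infty)}$ agrees with the shift $(G^{(n)})_{n \in \N} \mapsto (G^{(n+1)})_{n \in \N}$ of the inverse system. Since the shifted system is cofinal with the original, this shift induces an isomorphism on limits, whence $\operatorname{Fr}_{G^{(\infty)}}$ is an automorphism and $G^{(\infty)}$ is perfect.

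The main obstacle will be carrying out the identification in part (2) rigorously, because one must carefully distinguish the absolute Frobenius (identity on points, $p$-th power on functions) from the relative Frobenius morphisms implicitly used to define the $G^{(n)}$. The cleanest reduction is to the affine case $G = \Spec A$: there $G^{(\infty)} = \Spec \colim_{n} A^{(n)}$, where $A^{(n)}$ corresponds to $G^{(n)}$, and one checks directly that every element of the colimit ring admits a $p^n$-th root for some $n$, so the $p$-th power map has a two-sided inverse. Once this is in place, the group scheme verification in part (1) is essentially formal, being underwritten by the fact that the transition maps are group homomorphisms and that cofiltered limits of affine schemes exist.
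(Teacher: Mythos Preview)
Your proposal is correct and follows essentially the same approach as the paper's (which is only a sketch): for (1) you take limits of the group operations $\mu_n$, $\iota_n$, $e_n$ and use that limits commute with products, and for (2) you identify the absolute Frobenius on $G^{(\infty)}$ with the shift of the inverse system, which is exactly what the paper means by ``colimiting along the sheaves after precomposing with all possible Frobenius morphisms.'' Your version is in fact more careful than the paper's sketch---you explicitly justify existence of the cofiltered limit via affineness of the transition maps and flag the absolute-versus-relative Frobenius distinction---so there is nothing to correct.
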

\begin{proof}[Sketch of Proof]
(1): This follows directly from the fact that limits commute. In particular, if $\mu_n:G^{(n)} \times G^{(n)} \to G^{(n)}$ is the multiplication of each $G^{(n)}$, then we see that
\[
\lim_{\longleftarrow}(G^{(n)} \times G^{(n)}) \cong \lim_{\longleftarrow}G^{(n)} \times \lim_{\longleftarrow}G^{(n)} = G^{(\infty)} \times G^{(\infty)}.
\]
It follows that $\tilde{\mu}:G^{(\infty)} \times G^{(\infty)} \to G^{(\infty)}$ is given by
\[
\mu_{\infty} = \lim_{\longleftarrow} \mu_{n}.
\]
Similarly, $1_{G^{(\infty)}} = \lim 1_{G^{(n)}}$ and $\nu_{\infty} = \lim \nu_n$. The verification that these maps make $G^{(\infty)}$ into a group scheme then follows from the fact that each object $G^{(n)}$ is a group with operations $\mu_n, \nu_n, 1_{G^{(n)}}$.

(2): By construction, it follows that the induced Frobenius morphism on $G^{(\infty)}$ is an automorphism. In particular, this follows from the construction induced by colimiting along the sheaves after precomposing with all possible Frobenius morphisms.
\end{proof}
\begin{definition}[\cite{SerreProAlgGrp}]
A {\em quasi-algebraic group} is a {commutative} group scheme $H$ over $k$ for which there exists a {commutative} group variety $G$ over $k$ such that $H \cong G^{(\infty)}$. The category of all such groups and their group homomorphisms will be denoted $\QAGrp_{/k}$. If the field $k$ is clear from context, we simply write $\QAGrp$ instead.
\end{definition}
\begin{definition}
The pro-category of $\QAGrp$ is the category of all {\em pro-algebraic groups} over $k$, and will be denoted $\PAGrp_{/k}$. As above, if the field $k$ is clear from context, we write $\PAGrp$.
\end{definition}
\begin{remark}
A group scheme $G$ over $\Spec k$ that is an object of $\PAGrp_{/k}$ is called a {\em proalgebraic group}. More explicitly, let us see what information determines a proalgebraic group. Since a proalgebraic group is a pro-object of $\QAGrp_{/k}$, the following data determines such a group (cf.\@ \cite{SerreProAlgGrp} and \cite{SerreCorps}):
\begin{enumerate}
	\item A group scheme $G$ together with a collection of subgroup schemes $S$ such that for all $H \in S$, the quotient scheme $G/H$ exists and is a quasialgebraic group over $k$;
	\item If $H \in S$ and if $H^{\prime} \in S$, then $H \cap H^{\prime} \in S$. Moreover, the maps
	\[
	\frac{G}{H \cap H^{\prime}} \to \frac{G}{H}
	\]
	and
	\[
	\frac{G}{H \cap H^{\prime}} \to \frac{G}{H^{\prime}}
	\]
	are both morphisms of quasialgebraic groups;
	\item If $H \in S$ and if $K \in S$ with $H \hookrightarrow K$, then $K$ arises as the inverse image of some subgroup of $G/H$;
	\item There is an isomorphism of group schemes
	\[
	G \cong \lim_{\substack{\longleftarrow \\ H \in S}} \frac{G}{H}.
	\]
\end{enumerate}
Assume that $(G,S)$ and $(H,T)$ are pairs of data described above and regard $G$ and $H$ as proalgebraic groups. If there is a morphism $f \in \PAGrp(G,T)$, then $f$ gives a morphism $f:(G,S) \to (H,T)$ in the relevant sense, i.e., in particular, $f^{-1}(T) \subseteq S$. More explicitly, for any $K \in T$, $f^{-1}(K) \in S$. Moreover, the induced map $G/f^{-1}(K) \to H/K$ is also a morphism of quasialgebraic groups.
\end{remark}
The reason we restrict our attention to commutative quasialgebraic groups, and hence to commutative proalgebraic groups, is because this makes $\PAGrp_{/k}$ into an Abelian category. It may be worth extending this to noncommutative proalgebraic groups, as the construction of $G^{(\infty)}$ did not rely on commutativity in any way, but for the moment and for recovering Serre-Hazewinkel LCFT it is not necessary.
\begin{proposition}[\cite{SerreProAlgGrp}, Proposition 2.6]
The category $\PAGrp_{/k}$ is an Abelian category that satisfies the axiom Ab.5 and has a projective cogenerator.
\end{proposition}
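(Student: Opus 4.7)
The plan is to work up from $\QAGrp_{/k}$ to $\PAGrp_{/k}$ in stages. First I would show $\QAGrp_{/k}$ is Abelian: for any morphism $\phi\colon G \to H$ of commutative quasialgebraic groups, one computes the kernel as the perfection of the scheme-theoretic fiber of $\phi$ over the identity, and the cokernel as the perfection of the quotient of commutative group varieties $H/\phi(G)$ (such categorical quotients existing by standard algebraic group theory). Both operations preserve perfectness because taking $(-)^{(\infty)}$ commutes with the relevant finite limits and with the quotient construction, and the resulting group schemes are again perfections of commutative group varieties. The standard argument then shows that the canonical map from the coimage to the image is an isomorphism because the underlying morphism of commutative algebraic groups already has this property.

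Next I would invoke the classical fact that the pro-category of an Abelian category is again Abelian: given a morphism $f\colon (G_i)_{i \in I} \to (H_j)_{j \in J}$ in $\PAGrp_{/k}$, after reindexing along a suitable cofinal map one may assume $I = J$ and that $f$ is represented level-wise by maps $f_i\colon G_i \to H_i$, at which point kernels, cokernels, images and coimages in $\PAGrp_{/k}$ can be computed level-wise in $\QAGrp_{/k}$ and reassembled into pro-systems. The axiom Ab.5 is then obtained by combining the corresponding property in $\QAGrp_{/k}$ (which follows from standard facts about filtered colimits of commutative affine group schemes of finite type, or equivalently of the Hopf algebras representing them) with the observation that filtered colimits in $\PAGrp_{/k}$ are inherited from term-wise filtered colimits in $\QAGrp_{/k}$.

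The most delicate point, and the main obstacle, is constructing a projective cogenerator. The idea is to assemble a sufficiently large proalgebraic group $P$ out of universal covers of the basic commutative building blocks in characteristic $p$ --- namely the perfections of $\G_a$, of $\G_m$, of the Witt vector groups $W_n$, and of the finite \'etale cyclic groups. One verifies that $P$ is projective by invoking the structure theory of commutative proalgebraic groups, which provides a canonical decomposition (into unipotent, multiplicative, and \'etale parts) relative to which each building block admits an explicit projective cover inside $\PAGrp_{/k}$. Cogeneration is then established by noting that every nonzero morphism into a quasialgebraic group must factor nontrivially through at least one summand of $P$, which in turn follows from the classification of simple commutative quasialgebraic groups in characteristic $p$. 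The precise construction is carried out in the cited reference of Serre, and I would follow that route closely, with the main technical burden being the verification that the chosen $P$ simultaneously detects nonvanishing of maps and lifts through arbitrary epimorphisms in the pro-category.
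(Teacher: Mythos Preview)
The paper does not supply its own proof of this proposition; it is simply cited from Serre's original article. Your outline is therefore not competing against anything in the present paper, and as a reconstruction of Serre's argument its broad shape is reasonable: one first shows $\QAGrp_{/k}$ is Abelian, then uses the standard fact that the pro-category of an Abelian category is Abelian, and finally builds an explicit projective object out of the basic commutative building blocks ($\G_a^{(\infty)}$, $\G_m^{(\infty)}$, Witt groups, finite \'etale groups) via the structure theory of commutative algebraic groups in characteristic $p$. That is indeed close to what Serre does.

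Two points deserve tightening. First, your handling of Ab.5 is too casual: filtered colimits in a pro-category are \emph{not} computed term-wise in the way you suggest, and the general fact one gets for free is exactness of cofiltered \emph{limits} (Ab.5$^{\ast}$), not of filtered colimits. Whatever exactness axiom is being asserted, it requires a genuine argument specific to this situation rather than an appeal to ``term-wise'' behaviour. Second, your verification of the (co)generator property is phrased as detecting nonvanishing via maps \emph{out of} $P$, which is the condition for a \emph{generator}; a cogenerator is detected by maps \emph{into} it. Either the statement in the paper intends ``projective generator'' (which is what Serre's structure theory most naturally yields and what is actually used later to form projective resolutions and define $\pi_n = L_n\pi_0$), or you need to rework that paragraph so that the roles of source and target are correct. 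Either way, this is the step where your sketch would not yet compile into a proof.
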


In order to give $U_K$ the structure of a proalgebraic group, we will need to see it as a limit of the groups $\Gbb_{m,R/\mfrak^n}^{(\infty)}$. This will connect with our $p$-adic intuition of realizing $U_K$ as the limit of the groups $U_K^{(n)}$ of units in an $n$-infinitesimal neighborhood of the special fibre of $\Spec R$. However, to make this argument we need a way of viewing the Artinian rings $R/\mfrak^n$ as varieities over $k$.

We now must show how to view each of the Artinian local rings $R/\mathfrak{m}^n$ as varieties over $k$; the argument we give here is essentially due to Marvin J.\@ Greenberg, but is presented in \cite{SerreCorps}. First, recall that the ring of truncated Witt vectors over $k$, $W_{n}(k)$, may be viewed as a $k$-variety by seeing $\mathbf{W}_n$ as the group scheme with underlying scheme $\A_{k}^{n+1}$ whose addition, negation, multiplication, unit, and zero are all given by the standard truncated Witt polynomials; see \cite{SerreLocalFields}, \cite{Borger1}, for instance, for details on this process. It also follows from basic arguments in the theory of Witt vectors that $W_n(k)$ is an Artinian local ring with $W_n(k)/\mathfrak{n} \cong k$, where $\mathfrak{n}$ is the maximal ideal of $W_n(k)$. We can then find a sufficiently large $n \in \N$ so that we have a ring homomorphism $W_n(k) \to R/\mathfrak{m}^n$ which realizes $R/\mathfrak{m}^n$ as a finite-type $W_n(k)$-module. In particular, as a $W_n(k)$-module, we have an isomorphism
\[
\frac{R}{\mathfrak{m}^n} \cong \bigoplus_{i = 0}^{\ell} W_{n_i}(k)
\]
for $0 \leq n_i \leq n$  which  also respects the ring structure. Since each of the $W_{n_i}(k)$ has the structure of a $k$-variety, Transport-of-Structure gives a $k$-variety structure to $R/\mathfrak{m}^n$ in such a way that does not depend on the isomorphism
\[
\frac{R}{\mathfrak{m}^{n}} \cong \prod_{i=0}^{\ell} W_{n_i}(k)
\]
chosen; this is the way in which we will view $R/\mathfrak{m}^n$ as a $k$-variety throughout this chapter.

Let us now proceed to show how to give the group $U_K$ the structure of a proalgebraic group. Define the quasialgebraic group
\[
\G_{m,k}^{(\infty)} := \lim_{\longleftarrow} \G_{m,k}^{(n)};
\]
we will use this group, together with some of its base changes to the varieties $\G_{m,R/\mathfrak{m}^n}$, to construct the group scheme $\mathbf{U}_K$ which coincides with $U_K$ on global points. We need to now understand the structure of $U_K$ as a group in order to make it into a proalgebraic group. In particular, we would like to see how to view $U_K$ as a $p$-adic type of limit. So, to do this, we first define the (multiplicative) subgroups
\[
U_{K}^{(n)} := 1 + \mathfrak{m}^n \leq U_K
\]
for all $n \in \N$. Elementary arguments in the theory of local fields then show that there are two important isomorphisms of groups, the first connecting each group of units in $R/\mathfrak{m}$ to $U_K/U_{K}^{(n)}$
\[
\left(\frac{R}{\mathfrak{m}^n}\right)^{\ast} \cong \frac{U_K}{U_K^{(n)}},
\]
and the second connecting $U_K$ to the limit against the quotients of $U_K$ over the groups of principal units
\[
U_K \cong \lim_{\longleftarrow} \frac{U_K}{U_K^{(n)}} \cong \lim_{\longleftarrow} \left(\frac{R}{\mathfrak{m}^n}\right)^{\ast}.
\]
Define the proalgebraic group $\mathbf{U}_K$ over $k$ by first considering the groups $\G_{m,R/\mathfrak{m}^{n}}^{(\infty)}$, where $R/\mathfrak{m}^n$ has the variety structure considered above, and then taking
\[
\mathbf{U}_K := \lim_{\longleftarrow} \G_{m,R/\mathfrak{m}^n}^{(\infty)}.
\]
This leads to the following proposition:
\begin{proposition}\label{Prop UK is UK}
Consider the scheme $\mathbf{U}_K$ defined above. Then the following hold:
\begin{enumerate}
	\item $\mathbf{U}_K$ is a commutative proalgebraic group;
	\item $\mathbf{U}_K(k) \cong U_K$;
	\item For every $n \in \N$, there is a closed subgroup $\mathbf{U}_K^{(n)}$ of $\mathbf{U}_K$ associated to $U_K^{(n)}$ in that $\mathbf{U}_K^{(n)}(k) = U_K^{(n)}$;
\end{enumerate}
\end{proposition}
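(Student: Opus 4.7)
The plan is to dispatch the three claims in order, with the real content in (1) and (2) and part (3) being a kernel construction inside the Abelian category $\PAGrp_{/k}$.

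For (1), I would first observe that each $R/\mathfrak{m}^n$, via the Greenberg construction reviewed just above the proposition, is a finite commutative $k$-algebra and hence a commutative $k$-variety. Thus the functor $B \mapsto (R/\mathfrak{m}^n \otimes_k B)^{\ast}$ is representable by a commutative group variety $\G_{m,R/\mathfrak{m}^n}$ over $k$, and Lemma \ref{Quasialgebraic group  is group scheme} then promotes $\G_{m,R/\mathfrak{m}^n}^{(\infty)}$ to a commutative quasialgebraic group. The ring surjections $R/\mathfrak{m}^{n+1} \twoheadrightarrow R/\mathfrak{m}^n$ are $k$-variety morphisms and induce morphisms of quasialgebraic groups on the perfections; $\mathbf{U}_K$ is by construction the cofiltered limit in $\PAGrp_{/k}$, with the distinguished subgroup family $S$ given by the kernels of the projections onto the finite stages. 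Commutativity and the axioms (1)--(4) of a proalgebraic group from the remark above follow termwise from the finite-stage case.

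For (2), the idea is to commute $k$-points with the inverse limit. Since $k$ is perfect, $\operatorname{Fr}(p^n)$ is an automorphism of $\Spec k$, so the Frobenius-twist construction defining $(-)^{(n)}$ does not affect $k$-points; hence $\G_{m,R/\mathfrak{m}^n}^{(\infty)}(k) \cong \G_{m,R/\mathfrak{m}^n}(k) \cong (R/\mathfrak{m}^n)^{\ast}$, with the last isomorphism coming from the functor-of-points description of $\G_{m,R/\mathfrak{m}^n}$. Cofiltered inverse limits of affine $k$-schemes commute with $\Hom(\Spec k,-)$, so
\[
\mathbf{U}_K(k) \cong \lim_{\longleftarrow}\,\G_{m,R/\mathfrak{m}^n}^{(\infty)}(k) \cong \lim_{\longleftarrow}\left(\frac{R}{\mathfrak{m}^n}\right)^{\ast} \cong U_K,
\]
where the final isomorphism is the classical one recalled in the paragraph preceding the proposition.

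For (3), I would define $\mathbf{U}_K^{(n)}$ to be the kernel in the Abelian category $\PAGrp_{/k}$ of the canonical projection $\mathbf{U}_K \twoheadrightarrow \G_{m,R/\mathfrak{m}^n}^{(\infty)}$; this kernel exists as a closed subgroup of $\mathbf{U}_K$ by the preceding proposition. The transition system at the level of $k$-points consists of the surjections $(R/\mathfrak{m}^{n+1})^{\ast} \twoheadrightarrow (R/\mathfrak{m}^n)^{\ast}$, so the Mittag-Leffler condition holds and the $k$-points of $\mathbf{U}_K^{(n)}$ compute the kernel of $U_K \twoheadrightarrow (R/\mathfrak{m}^n)^{\ast}$, which by the classical identification is precisely $1 + \mathfrak{m}^n = U_K^{(n)}$.

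The main obstacle is (2): one must verify rigorously that the Frobenius-twist perfection $(-)^{(\infty)}$ is pointwise trivial on the perfect residue field $k$, and that the inverse limit defining $\mathbf{U}_K$ in $\PAGrp_{/k}$ genuinely computes the naive inverse limit on $k$-points (rather than only up to a Mittag-Leffler obstruction). Parts (1) and (3) are then essentially bookkeeping once the Abelian category structure and the limit description of $\PAGrp_{/k}$ from the earlier proposition are in hand.
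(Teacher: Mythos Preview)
Your argument is correct and follows essentially the same line as the paper's proof for parts (1) and (2); in fact you supply more justification than the paper does (e.g., why perfection does not disturb $k$-points when $k$ is perfect, and why $k$-points commute with the cofiltered limit), whereas the paper simply writes down the chain of isomorphisms.

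The only point of divergence is in (3). You define $\mathbf{U}_K^{(n)}$ as the kernel in $\PAGrp_{/k}$ of the projection $\mathbf{U}_K \to \G_{m,R/\mathfrak{m}^n}^{(\infty)}$ and then read off the $k$-points via Mittag--Leffler. The paper instead builds $\mathbf{U}_K^{(n)}$ directly as a pro-object: it takes the subgroup sheaf $1 + \mathscr{M}^n$ of $\G_{m,R}$ and repeats the $\mathbf{U}_K$ construction with the tower $U_K^{(n)}/U_K^{(m)}$ for $m \geq n$. These two descriptions yield the same closed subgroup, so the difference is cosmetic; your kernel formulation is arguably cleaner since it invokes the Abelian category structure already established, while the paper's version has the minor advantage of making the filtration by the $\mathbf{U}_K^{(n)}$ visibly compatible with the defining pro-system.
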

\begin{proof}
(1): This holds from the fact that for each $n \in \N$, $\G_{m,R/\mathfrak{m}^n}^{(\infty)}$ is a commutative quasialgebraic group.

(2): For this we calculate that
\[
\mathbf{U}_K(k) = \lim_{\longleftarrow}\left(\G_{m,R/\mathfrak{m}^{n}}^{(\infty)}(k)\right) \cong \lim_{\longleftarrow}\left(\frac{R}{\mathfrak{m}^n}\right)^{\ast} \cong \lim_{\longleftarrow}\frac{U_K}{U_K^{(n)}} \cong U_K.
\]

(3): The scheme $\mathbf{U}_{K}^{(n)}$ associated to $U_{K}^{(n)}$ comes from the fact that the sheaf $1 + \mathscr{M}^n$ is a subgroup sheaf of $\G_{m,R}$ and then doing the same construction as $\mathbf{U}_K$ mutatis mutndis with $U_K/U_{K}^{(m)}$ replaced by $U_K^{(n)}/U_K^{(m)}$ for $m \geq n$. 

\end{proof}

\begin{remark}
From $(3)$, it follows that
\[
\frac{\mathbf{U}_K}{\mathbf{U}_K^{(1)}} \cong \G_{m,k}^{(\infty)}
\]
by noting that
\[
\frac{\mathbf{U}_K}{\mathbf{U}_K^{(1)}} \cong \lim_{\substack{\longleftarrow \\ n \in \N}} \frac{\G_{m,R/\mathfrak{m}^{n}}^{(\infty)}}{\G_{m,R/\mathfrak{m}^{n+1}}^{(\infty)}} \cong \lim_{\substack{\longleftarrow \\ n \in \N}} \G_{m,k}^{(\infty)} = \G_{m,k}^{(\infty)}.
\]
\end{remark}

Using the general theory of Witt vectors and strict $p$-rings (cf.\@ \cite{SerreLocalFields} for a classical perspective), since the group scheme $\G_{m,k}^{(\infty)}$ is perfect it lifts via Witt vectors to a scheme over $\Spec R$. In particular, since the quotient $q_{\mathbf{U}_K}:\mathbf{U}_K^{(1)} \to \mathbf{U}_K/\mathbf{U}_K^{(1)}$ is equivalent to reducing the scheme modulo the maximal ideal $\mathfrak{m}$ of $R$, the Teichm{\"u}ller section $\tau:k \to R$ induces a section of $q_{\mathbf{U}_K^{(1)}}$, after post-composing with the isomorphism between the quotient and $\G_{m,k}^{(\infty)}$. This implies that the diagram
\[
\xymatrix{
	\G_{m,k}^{(\infty)} \ar[r] \ar@{=}[d] & \mathbf{U}_K \ar[d]^{q_{\mathbf{U}_{K}^{(1)}}} \\
	\G_{m,k}^{(\infty)} & \mathbf{U}_K/\mathbf{U}_{K}^{(1)} \ar[l]^{\cong}
}
\]
commutes, which in turn gives us the proposition below after appealing to the (Short) Five Lemma.

\begin{proposition}
The Teichm{\"u}ller section $\G_{m,k}^{(\infty)} \to \mathbf{U}_K$ induces a splitting of proalgebraic groups
\[
\mathbf{U}_K \cong \G_{m,k}^{(\infty)} \times \mathbf{U}_K/\mathbf{U}_K^{(1)},
\]
where the product is taken in $\PAGrp_{/k}$.
\end{proposition}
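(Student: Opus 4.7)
The plan is to assemble the discussion preceding the proposition into a formal splitting of the short exact sequence
\[
0 \to \mathbf{U}_K^{(1)} \to \mathbf{U}_K \xrightarrow{q_{\mathbf{U}_K^{(1)}}} \mathbf{U}_K/\mathbf{U}_K^{(1)} \to 0
\]
in the abelian category $\PAGrp_{/k}$. The crucial input is the Teichm{\"u}ller-induced morphism $s:\G_{m,k}^{(\infty)} \to \mathbf{U}_K$ constructed just above, which is to serve as a section of the quotient map after identifying $\mathbf{U}_K/\mathbf{U}_K^{(1)}$ with $\G_{m,k}^{(\infty)}$ via the preceding remark.

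First I would verify that $s$ is a genuine morphism in $\PAGrp_{/k}$ and that it splits $q_{\mathbf{U}_K^{(1)}}$. At each finite level $n$, strict $p$-ring theory lifts the perfect group scheme $\G_{m,k}^{(\infty)}$ to a scheme over $\Spec R/\mathfrak{m}^n$, and the Teichm{\"u}ller section $\tau:k \to R$ produces a morphism $s_n:\G_{m,k}^{(\infty)} \to \G_{m,R/\mathfrak{m}^n}^{(\infty)}$; multiplicativity of $s_n$ is exactly the defining property of Teichm{\"u}ller representatives in strict $p$-rings. Naturality in the base gives compatibility of the $s_n$ with the transition maps of the pro-system defining $\mathbf{U}_K$, so they assemble into $s = \lim_{\longleftarrow} s_n$ in $\PAGrp_{/k}$. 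Commutativity of the diagram displayed immediately before the proposition then reduces to the tautological fact that the Teichm{\"u}ller lift of an element of $k^{\times}$, followed by reduction modulo $\mathfrak{m}$, returns the original element, passed through the perfection and the limit.

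Having a section of the quotient in the abelian category $\PAGrp_{/k}$, I would then form the morphism
\[
\varphi:\mathbf{U}_K^{(1)} \times \G_{m,k}^{(\infty)} \to \mathbf{U}_K, \qquad (u,g) \mapsto u \cdot s(g),
\]
place it into the evident morphism of short exact sequences whose outer columns are $\id_{\mathbf{U}_K^{(1)}}$ and the isomorphism $\G_{m,k}^{(\infty)} \xrightarrow{\sim} \mathbf{U}_K/\mathbf{U}_K^{(1)}$, and invoke the (Short) Five Lemma (as prompted in the excerpt) to conclude that $\varphi$ is an isomorphism. Rewriting one of the factors via the same isomorphism yields the stated decomposition.

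The main obstacle I anticipate is the first step: verifying that the Teichm{\"u}ller lift passes cleanly through both the perfection functor $(-)^{(\infty)}$ and the Greenberg-style $k$-variety structure on $R/\mathfrak{m}^n$, since that structure was only defined up to a choice of decomposition $R/\mathfrak{m}^n \cong \bigoplus_i W_{n_i}(k)$. One must check that the multiplicative Teichm{\"u}ller map is insensitive to this choice and interacts correctly with the inverse limit used to define $\mathbf{U}_K$. Once this compatibility is in hand, the splitting step is a standard piece of abelian-category formalism.
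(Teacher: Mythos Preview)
Your proposal is correct and follows exactly the approach the paper indicates in the paragraph immediately preceding the proposition: construct the Teichm{\"u}ller-induced section of $q_{\mathbf{U}_K^{(1)}}$ and then invoke the (Short) Five Lemma to obtain the splitting. Your added care in checking that the level-wise maps $s_n$ are multiplicative and compatible with the pro-system is a welcome elaboration of what the paper leaves implicit; note, incidentally, that the displayed isomorphism in the proposition almost certainly contains a typo (one factor should be $\mathbf{U}_K^{(1)}$ rather than $\mathbf{U}_K/\mathbf{U}_K^{(1)}$, as confirmed by the later use in Lemma~\ref{Lemma: UL is proalg group and norm map is morphisme}), so your map $\varphi$ already lands on the intended target without any further rewriting.
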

\begin{proposition}
	The proalgebraic group $\mathbf{U}_K$ is connected.
\end{proposition}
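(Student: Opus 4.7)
The plan is to detect connectedness of $\mathbf{U}_K$ stage by stage through its defining pro-system. By construction, $\mathbf{U}_K \cong \lim_{\longleftarrow} \G_{m,R/\mathfrak{m}^n}^{(\infty)}$, and a proalgebraic group is connected precisely when every quasialgebraic quotient in a cofinal defining system is connected, since any nontrivial decomposition of $\mathbf{U}_K$ as a product of proalgebraic subgroups would be witnessed on some finite stage $\G_{m,R/\mathfrak{m}^n}^{(\infty)}$. Thus it suffices to verify that each $\G_{m,R/\mathfrak{m}^n}^{(\infty)}$ is a connected quasialgebraic group.

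For this, I would analyze the underlying $k$-variety $\G_{m,R/\mathfrak{m}^n}$, using that $R/\mathfrak{m}^n$ is an Artinian local ring with residue field $k$ endowed with its $k$-variety structure through the Witt/Greenberg construction recalled earlier in the section. The Teichm{\"u}ller splitting yields a decomposition $(R/\mathfrak{m}^n)^{\ast} \cong k^{\ast} \times (1 + \mathfrak{m}/\mathfrak{m}^n)$ which lifts to an isomorphism of $k$-group varieties $\G_{m,R/\mathfrak{m}^n} \cong \G_{m,k} \times V_n$, where $V_n$ is the unipotent $k$-group representing the one-units. The group $V_n$ admits a finite filtration whose graded pieces are the $k$-vector groups $\mathfrak{m}^i/\mathfrak{m}^{i+1}$, so $V_n$ is a successive extension of connected unipotent groups and is itself connected. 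Combined with the connectedness of $\G_{m,k}$, this shows $\G_{m,R/\mathfrak{m}^n}$ is a connected algebraic group over $k$.

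Passing from a variety to its perfection preserves connectedness: each twist $G^{(\ell)}$ shares the underlying topological space of $G$, and any idempotent in $\CalO(G^{(\infty)}) = \varinjlim_{\ell} \CalO(G^{(\ell)})$ pulls back under a sufficiently high Frobenius shift to an idempotent of $\CalO(G)$, which must be trivial when $G$ is connected. Applying this to $G = \G_{m,R/\mathfrak{m}^n}$ yields the connectedness of each $\G_{m,R/\mathfrak{m}^n}^{(\infty)}$, completing the proof via the reduction above. I expect the main obstacle to lie in the first step: making precise that connectedness in $\PAGrp_{/k}$ is detected on any cofinal system of quasialgebraic quotients, which requires a little care with Serre's connected-components functor on proalgebraic groups and the fact that projective limits of connected proalgebraic groups remain connected.
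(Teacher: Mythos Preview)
Your proposal is correct and follows essentially the same approach as the paper: reduce connectedness of $\mathbf{U}_K$ to connectedness of each quasialgebraic stage $\G_{m,R/\mathfrak{m}^n}^{(\infty)}$, and verify the latter directly. The paper's proof is terser---it simply asserts that each $\G_{m,R/\mathfrak{m}^n}$ is connected as a subscheme of some $\G_{a,k}^m$---whereas you supply more structure via the Teichm{\"u}ller splitting, the unipotent filtration on one-units, and the explicit check that perfection preserves connectedness; but the overall strategy is identical.
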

\begin{proof}
Since
\[
\mathbf{U}_K \cong \lim_{\longleftarrow} \G_{m,R/\mathfrak{m}^n}^{(\infty)} \cong \lim_{\longleftarrow} \G_{m,R/\mathfrak{m}^n}^{(\infty)},
\]
and since each of the groups $\G_{m,R/\mathfrak{m}^n}^{(\infty)}$ is itself connected (as the general construction of the schemes $\G_{m,-}$ are connected and the underlying scheme of each $R/\mathfrak{m}^n$ is some connected subscheme of some $\G_{a,k}^m$ for $m $ sufficiently large), the result follows.
\end{proof}

\begin{definition}
Let $(G,S)$ be a proalgebraic group over $k$. Then we say that $G$ is {\em connected} (respectively that $G$ is {\em zero dimensional}) if for every subgroup $H \in S$, the group scheme $G/H$ is connected (respectively finite).
\end{definition}

It is worth observing that for any proalgebraic group $G$ there is a maximal closed subgroup $G_0$ of $G$ which is connected (and is defined by the maximal profinite quotient of $G$). In particular, this assignment is functorial, and so is taking quotients by this maximal connected subgroup. Thus there is a functor $\pi_0:\PAGrp_{/k} \to \ProfAGrp$, where $\ProfAGrp$ is the Abelian category of Abelian profinite groups. Note that $\pi_0(G)$ is given by the quotient group
\[
\pi_0(G) := \frac{G}{G^0},
\]
where $G^{0}$ is the neutral component of $G$, i.e., the connected component of the identity of $G$.
Note that since we have quotiented out the maximal connected subgroup  of $G$, $\pi_0(G)$ is a zero-dimensional proalgebraic group. Similarly, if $G$ is a zero-dimensional proalgebraic group, then $G$ may be identified as the perfect constant group scheme on $\pi_0(G)$, which is in particular an Abelian profinite group.

\begin{definition}[\cite{SerreProAlgGrp} and \cite{SuzYosh}]
The group $\pi_0^{k}(G)$ is called the {\em group of connected components of $G$} or the {\em zero-th homotopy group of $G$}.
\end{definition}

We would now like to understand this $0$-th homotopy group more in depth.  Let us begin by studying the categorical properties of the functor $\pi_0$. Observe that since $\pi_0$ is a functor defined by taking a quotient, which is in particular a type of colimit, it follows that $\pi_0$ is right exact by the commutativity of colimits with other colimits. Moreover, the functors $\pi_0$ are not left exact, although they do commute with {\em filtered} limits; thus there should be an analogous long exact homotopy sequence of groups induced by taking left derived functors. However, while this may be formally done via the theory of derived categories (as the functor $\pi_0$ is additive), we should try to understand what hom-spaces the functors $L_i\pi_0$ will classify, in analogy to the fact that for the topological fundamental groups $\pi_n^{\Top}(X,x) \cong \Top_{\ast}((\SSS^{n},1),(X,x))$.

To see what the $L_i\pi_0$ classify, we will follow \cite{SerreProAlgGrp} and use some basic homological algebra of derived functors. In particular, we will understand that the $\Ext$ functors correspond to right-derived functors of hom-functors. Let $\mathbf{N}$ be a zero-dimensional proalgebraic group over $k$ and let $N$ be the profinite group associated to $N$; note this follows because each quotient $\mathbf{N}/\mathbf{H}$ of $\mathbf{N}$ by a specified closed subscheme $\mathbf{H}$ is finite, and that $N = \mathbf{N}(\overline{k})$. Then we get the following lemma:
\begin{lemma}[\cite{SerreProAlgGrp}]\label{Lemma: Factorizing G,N to pi0G, N}
The functor
\[
\PAGrp(-,\mathbf{N}):\PAGrp^{\op} \to \ProfAGrp
\]
factors as
\[
\xymatrix{
\PAGrp^{\op} \ar[dr]_{\pi_0} \ar[rr]^{\PAGrp(-,\mathbf{N})} & & \ProfAGrp \\
 & \ProfAGrp^{\op} \ar[ur]_{\ProfAGrp(-,N)}
}
\]
in the category $\Cat$ of categories. In particular,
\[
\PAGrp(G,\mathbf{N}) \cong \ProfAGrp(\pi_0(G), N).
\]
\end{lemma}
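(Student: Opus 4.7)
The plan is to realize this factorization as a manifestation of an adjunction: $\pi_{0}$ is left adjoint to the inclusion of zero-dimensional proalgebraic groups into all proalgebraic groups, and under the remark preceding the lemma the former category is equivalent to $\ProfAGrp$ via $\mathbf{N} \mapsto N = \mathbf{N}(\overline{k})$. So the task reduces to producing a natural bijection
\[
\PAGrp(G, \mathbf{N}) \; \cong \; \PAGrp\bigl(\pi_{0}(G), \mathbf{N}\bigr)
\]
induced by precomposition with the quotient $q \colon G \twoheadrightarrow \pi_{0}(G) = G/G^{0}$, and then transporting across this equivalence.

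First I would prove the central vanishing claim: every morphism $f \in \PAGrp(G, \mathbf{N})$ restricts to the trivial morphism on the neutral component $G^{0}$. Writing $\mathbf{N}$ as the cofiltered limit of its finite quotients $\mathbf{N}/\mathbf{H}$, it suffices to show that each composite $f_{\mathbf{H}} \colon G^{0} \to \mathbf{N} \to \mathbf{N}/\mathbf{H}$ is trivial. Since $G^{0}$ is connected as a proalgebraic group, each of its quasialgebraic quotients is connected; factoring $f_{\mathbf{H}}$ through a suitable quasialgebraic quotient of $G^{0}$, we have a morphism from a connected group scheme into a finite (hence zero-dimensional) one that sends $1 \mapsto 1$, so the scheme-theoretic image lies in the connected component of the identity of $\mathbf{N}/\mathbf{H}$, which is trivial. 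Passing to the limit over $\mathbf{H}$ gives $f|_{G^{0}} = 0$.

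Next I would invoke the universal property of the quotient $q \colon G \to G/G^{0} = \pi_{0}(G)$ in the Abelian category $\PAGrp_{/k}$: since $G^{0} \subseteq \ker f$, the morphism $f$ factors uniquely as $f = \overline{f} \circ q$ for some $\overline{f} \colon \pi_{0}(G) \to \mathbf{N}$. The assignment $f \mapsto \overline{f}$ is a bijection onto $\PAGrp(\pi_{0}(G), \mathbf{N})$, with inverse given by precomposition with $q$. Finally, using the identification recalled just before the lemma --- that a zero-dimensional proalgebraic group is the perfect constant group scheme on its associated profinite group of $\overline{k}$-points, and that this identification is an equivalence of categories between zero-dimensional proalgebraic groups and $\ProfAGrp$ --- the set $\PAGrp(\pi_{0}(G), \mathbf{N})$ is in natural bijection with $\ProfAGrp(\pi_{0}(G), N)$. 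Chaining the two bijections gives the desired isomorphism, and naturality in $G$ is immediate because both bijections are induced by (pre)composition.

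I expect the main obstacle to be the vanishing claim $f|_{G^{0}} = 0$. All the remaining steps are formal manipulations with adjunctions and universal properties, but this claim requires one to be careful about the distinction between the several notions of ``connected'' in play (connectedness of $G$ in $\PAGrp$ versus of the schemes $G/H$, and versus the topological connectedness of underlying spaces of finite $k$-group schemes), and to verify that the images of connected quasialgebraic groups inside finite group schemes are trivial --- an assertion that ultimately rests on the fact that a connected quasialgebraic group has no nontrivial finite quotient in $\QAGrp_{/k}$, which itself follows from the perfection and connectedness built into the definition of $G^{(\infty)}$.
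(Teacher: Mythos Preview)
Your proposal is correct and follows essentially the same approach as the paper's sketch: both argue that a morphism $G \to \mathbf{N}$ is trivial on the neutral component $G^{0}$ (the paper phrases this as ``$N$ is totally disconnected, hence constant on the neutral component''), and then invoke the universal property of the quotient $G \to \pi_{0}(G)$. Your version simply fills in more detail on the vanishing step by reducing to finite quotients of $\mathbf{N}$, which is exactly what the paper's one-line appeal to total disconnectedness is implicitly doing.
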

\begin{proof}[Sketch]
Since $\mathbf{N}$ is zero-dimensional, morphisms $G \to \mathbf{N}$ lie in bijection with morphisms of profinite groups $G(\overline{k}) \to \mathbf{N}(k)$. Moreover, since $N$ is profinite, it is totally disconnected; as such, it is constant on the neutral component of $G$. Thus any map $G \to \mathbf{N}$ uniquely factors through $\pi_0(G)$, which was to be shown.
\end{proof}

The above lemma tells us that the $0$-th fundamental group classifies maps with profinite codomain in a very relevant sense. However, since a profinite group takes the form
\[
N \cong \lim_{\longleftarrow} N/H,
\]
where each quotient $N/H$ is a finite group, we have that
\[
\ProfAGrp(\pi_0(G),N) \cong \ProfAGrp\left(\pi_0(G),\lim_{\longleftarrow}\frac{N}{H}\right) \cong \lim_{\longleftarrow}\ProfAGrp\left(\pi_0(G), \frac{N}{H}\right)
\]
and so $\pi_0^{k}(G)$ determines maps out of $G$ with locally finite image, which is  to say locally finite isogenies.

We now only need one more technical result about the category $\PAGrp_{/k}$, which involves projective resolutions. In particular, we will cite, but not prove (as it is technical but not as informative as one may hope), a proposition that shows projective objects in $\PAGrp_{/k}$ may be taken exactly from projective objects in the full subctegory $\PAGrp_{/k}^{0}$ of zero-dimensional proalgebraic groups. This will allow us  to deduce that when one considers zero-dimensional proalgebraic groups, it suffices to take projective resolutions whose objects are all zero-dimensional, and then deduce further with the help of Lemma \ref{Lemma: Factorizing G,N to pi0G, N} that there is a spectral sequence for $\Ext$ groups involving $\pi_0$.

\begin{proposition}[Proposition 5, Section 4 of \cite{SerreProAlgGrp}]
Let $\G$ be a zero-dimensional proalgebraic group over $k$ and let $G$ be the associated profinite group to $\G$. Then the following are equivalent:
\begin{enumerate}
	\item $\G$ is a projective object in $\PAGrp_{/k}^{0}$;
	\item $\G$ is a projective object in $\PAGrp_{/k}$;
	\item $G$ is isomorphic to a product of the groups $\Z_{\ell}$ for integer primes $\ell$;
	\item $G$ is torsion-free;
	\item The group $\Ab(G,\Q/\Z)$ is divisible.
\end{enumerate}
\end{proposition}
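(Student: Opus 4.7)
The plan is to work entirely in the category $\ProfAGrp$ of Abelian profinite groups, using the equivalence $\PAGrp_{/k}^{0} \simeq \ProfAGrp$ sending $\G$ to $G = \G(\overline{k})$, together with Pontryagin duality, which gives an anti-equivalence of Abelian categories between $\ProfAGrp$ and the category of discrete torsion Abelian groups via $G \mapsto G^{\vee} := \Hom_{\mathrm{cts}}(G, \Q/\Z)$.

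For $(3) \Leftrightarrow (4) \Leftrightarrow (5)$: under Pontryagin duality, ``multiplication by $n$'' on $G$ dualizes to ``multiplication by $n$'' on $G^{\vee}$, so $G$ is torsion-free iff $G^{\vee}$ is divisible, giving $(4) \Leftrightarrow (5)$. For $(3) \Leftrightarrow (4)$, decompose $G \cong \prod_{\ell} G_{\ell}$ into pro-$\ell$ components and dualize to $G^{\vee} \cong \bigoplus_{\ell} G_{\ell}^{\vee}$; by the classical structure theorem, a divisible torsion Abelian group is a direct sum of Pr{\"u}fer groups $\Q_{\ell}/\Z_{\ell}$, whose Pontryagin dual is precisely a product of copies of $\Z_{\ell}$. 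The equivalence $(1) \Leftrightarrow (3)$ follows by the same duality: projectivity of $\G$ in $\PAGrp_{/k}^{0}$ is equivalent to injectivity of $G^{\vee}$ in discrete torsion Abelian groups, and since direct sums of divisibles remain divisible (hence injective), the injectives of the latter category are exactly the direct sums $\bigoplus \Q_{\ell}/\Z_{\ell}$, dualizing back to products of $\Z_{\ell}$'s.

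The comparison $(1) \Leftrightarrow (2)$ is the main technical step. The direction $(2) \Rightarrow (1)$ is immediate because the inclusion $\iota : \PAGrp_{/k}^{0} \hookrightarrow \PAGrp_{/k}$ is a fully faithful exact functor, so any lift available against surjections in the larger category works against surjections in the smaller. For $(1) \Rightarrow (2)$, the strategy is to show $\Ext^{1}_{\PAGrp_{/k}}(\G, H) = 0$ for every $H \in \PAGrp_{/k}$. Using the connected-component short exact sequence $0 \to H^{0} \to H \to \pi_{0}(H) \to 0$, the $\pi_{0}(H)$-contribution vanishes because any extension of a zero-dimensional group by a zero-dimensional group is zero-dimensional, so $\Ext^{1}_{\PAGrp_{/k}}(\G, \pi_{0}(H)) = \Ext^{1}_{\PAGrp_{/k}^{0}}(\G, \pi_{0}(H)) = 0$ by $(1)$. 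The essential obstacle — and the reason the author cites Serre rather than reproducing the proof — is showing $\Ext^{1}_{\PAGrp_{/k}}(\G, C) = 0$ for $\G$ zero-dimensional projective and $C$ connected, i.e., that every extension of a profinite proalgebraic group by a connected one splits; this requires the spectral sequence for $\Ext$ via the derived functors of $\pi_{0}$ mentioned in the paragraph preceding the statement, together with the analysis of projective resolutions indicated there.
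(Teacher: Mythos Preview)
Your treatment of $(3)\Leftrightarrow(4)\Leftrightarrow(5)$ via Pontryagin duality is correct and is essentially what the paper sketches (the paper phrases it through $\Ab(G,\Q/\Z)\cong\varinjlim\Ab(G,\Z/n\Z)$, which amounts to the same thing). Your argument for $(1)\Leftrightarrow(3)$ by dualizing projectivity in $\ProfAGrp$ to injectivity in discrete torsion groups is also fine; the paper instead links $(1)$ directly to $(4)$, but that is an inessential reorganization. Likewise $(2)\Rightarrow(1)$ is handled the same way in both.

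Where your proposal goes wrong is in the justification of the hard implication. You correctly isolate the obstacle as $\Ext^{1}_{\PAGrp_{/k}}(\G,C)=0$ for $C$ connected, but you then invoke ``the spectral sequence for $\Ext$ via the derived functors of $\pi_0$ mentioned in the paragraph preceding the statement.'' This is backwards: that paragraph is announcing that the present proposition (via its corollary on projective resolutions in $\PAGrp_{/k}^{0}$ versus $\PAGrp_{/k}$) will be \emph{used to derive} the spectral sequence, not the other way around. Moreover, the spectral sequence the paper states is $\Ext^{p}(\pi_q(G),\mathbf{N})\Rightarrow\Ext^{p+q}(G,\mathbf{N})$ with $\mathbf{N}$ zero-dimensional in the \emph{second} variable, so it computes extensions \emph{into} zero-dimensional groups; it says nothing directly about $\Ext^{1}(\G,C)$ with $C$ connected in the second slot.

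The paper's route avoids this trap by not attempting $(1)\Rightarrow(2)$ at all. Instead it closes the cycle with $(3)\Rightarrow(2)$, citing Serre's Proposition~4: one shows concretely that the proalgebraic group attached to a product of $\Z_\ell$'s is projective in the full category $\PAGrp_{/k}$. That argument uses the explicit structure of $\Z_\ell$ (e.g.\ realizing it via isogenies of connected groups and lifting along those), not the spectral sequence. So your outline is sound up to the last step, but the tool you name there is both logically downstream of the proposition and of the wrong variance; the correct input is Serre's direct projectivity result for $\prod_\ell \Z_\ell$.
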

\begin{proof}[Sketch]
The general theory of torsion profinite groups shows that $(3), (4),$ and $(5)$ are all equivalent when one realizes that
\[
\Ab(G,\Q/\Z) \cong \Ab\left(G,\lim_{\longrightarrow} \frac{\Z}{n\Z}\right) \cong \lim_{\longrightarrow} \Ab\left(G,\frac{\Z}{n\Z}\right).
\]
The fact that $(2) \implies (1)$ is trivial, while the equivalence of $(1)$ and $(4)$ may be shown directly from the fact that $\G$ is associated to $G$ and then using the projective condition to imply that you cannot have the desired lifts if $G$ has nontrivial torsion. Finally, $(3) \implies (2)$ is Proposition 4 of Section 4 of \cite{SerreProAlgGrp}.
\end{proof}
\begin{corollary}
If $G$ is a zero-dimensional proalgebraic group, any projective resolution of $G$ in $\PAGrp_{/k}^{0}$ is also a projective resolution of $G$ in $\PAGrp_{/k}$.
\end{corollary}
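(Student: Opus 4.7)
The plan is to unpack the definition of a projective resolution and verify the two requirements carry across the inclusion $\iota \colon \PAGrp_{/k}^{0} \hookrightarrow \PAGrp_{/k}$. By definition, a projective resolution of $G$ in $\PAGrp_{/k}^{0}$ is an exact complex
\[
\cdots \to P_2 \to P_1 \to P_0 \to G \to 0
\]
in $\PAGrp_{/k}^{0}$ with each $P_i$ a projective object of $\PAGrp_{/k}^{0}$. To see that this is a projective resolution of $G$ in $\PAGrp_{/k}$, I need (a) each $P_i$ is projective in the larger category $\PAGrp_{/k}$, and (b) the complex remains exact when viewed in $\PAGrp_{/k}$.

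Condition (a) is immediate from the preceding proposition: each $P_i$ is zero-dimensional, so the equivalence of conditions (1) and (2) there yields that projectivity in $\PAGrp_{/k}^{0}$ and in $\PAGrp_{/k}$ coincide.

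For condition (b), the idea is to show that the inclusion $\iota$ is an exact functor, equivalently that kernels and cokernels of morphisms between zero-dimensional proalgebraic groups, computed in $\PAGrp_{/k}$, remain zero-dimensional and so agree with those computed in the full subcategory. To verify this, I would use the defining property that a proalgebraic group $H$ is zero-dimensional exactly when every quasialgebraic quotient $H/H'$ is finite: quotients of $G$ are already quotients of the inverse system exhibiting $G$ as zero-dimensional, hence are finite; and for a closed subgroup $H \leq G$, the induced projective system of quotients $H/(H \cap H')$ for $H' \in S$ consists of closed subgroup schemes of the finite group schemes $G/H'$, hence is again finite. Thus kernels and cokernels are preserved by $\iota$, and any exact sequence in $\PAGrp_{/k}^{0}$ is exact in $\PAGrp_{/k}$.

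Combining (a) and (b) gives the corollary. The only real obstacle is the verification of exactness of $\iota$, which is the content of the paragraph above; since the Serre-type stability of zero-dimensionality under subobjects and quotients is essentially a direct consequence of the characterization recalled in the excerpt, this is routine rather than deep.
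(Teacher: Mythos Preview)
Your proposal is correct and matches what the paper intends: the corollary is stated there without proof, as an immediate consequence of the preceding proposition, and your argument makes explicit exactly the two points (projectivity transfers by the proposition, exactness transfers because $\PAGrp_{/k}^{0}$ is closed under subobjects and quotients in $\PAGrp_{/k}$) that this tacitly relies on.
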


\begin{definition}
If $G$ is a proalgebraic group over $k$, then we define the {\em higher homotopy groups of $G$} to be the left derived functors of $\pi_{0}$ of $G$, i.e., for all $n \in \N$,
\[
\pi_{n}(G) := L_{n}\pi_{0}(G).
\]
The group $\pi_1(G)$ is called the {\em fundamental group of $G$}.
\end{definition}

Note that from this definition, it follows that if
\[
\xymatrix{
0 \ar[r] & G \ar[r] & H \ar[r] & K \ar[r] & 0	
}
\]
is a short exact sequence of proalgebraic groups, then there is an induced long exact sequence of proalgebraic groups:
\[
\xymatrix{
 \cdots \ar[r] & \pi_{n}(G) \ar[r] & \pi_{n}(H) \ar[r] & \pi_{n}(K) \ar[r]^-{\delta_{n-1}} & \pi_{n-1}(G) \ar[r] & \cdots \ar[r] & \pi_{2}(G) \ar[d]^{\delta_1} \\
 &\pi_{0}(K)  & \pi_{0}(H)\ar[l] & \pi_{0}(G)\ar[l] & \pi_1(K) \ar[l]^{\delta_0} & \pi_1(H)\ar[l] & \pi_1(G) \ar[l]	
}
\]
We will use this to finally see the spectral sequence between $\Ext$ groups and $\pi_i^k$ groups, which will be particularly important to the proof of Theorem \ref{Theorem: LCFT of Hazewinkel}.
\begin{Theorem}\label{Theorem: Spectral Sequence for fundamental groups}
Let $G$ be a proalgebraic group over $k$ and let $\mathbf{N}$ be a zero-dimensional proalgebraic group  over $k$ with associated profinite group $N$. For any $p,q \in \N$, there is a spectral sequence
\[
\Ext^{p}(\pi_q^{k}(G),\mathbf{N}) \Rightarrow \Ext^{p+q}(G,\mathbf{N}).
\]
In particular, if $G$ is connected there is an isomoprhism
\[
\Ext^{1}(G,\mathbf{N}) \cong \ProfAGrp(\pi_1(G),N).
\]
\end{Theorem}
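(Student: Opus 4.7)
The plan is to realize the spectral sequence as an instance of Grothendieck's spectral sequence for the composition of derived functors arising from the factorization in Lemma \ref{Lemma: Factorizing G,N to pi0G, N}. That lemma gives $\hom_{\PAGrp}(-, \mathbf{N}) \cong \hom_{\ProfAGrp}(-, N) \circ \pi_0$, where $\pi_0$ is right exact (being a colimit construction) with left derived functors $\pi_q$ by definition, and $\hom_{\ProfAGrp}(-, N)$ is a left exact contravariant functor on $\ProfAGrp$ with right derived functors $\Ext^p_{\ProfAGrp}(-, N)$. Applying Grothendieck's theorem in $\PAGrp^{\op}$ to account for the variance, one obtains the desired convergent spectral sequence $E_2^{p,q} = \Ext^p_{\ProfAGrp}(\pi_q(G), N) \Rightarrow \Ext^{p+q}_{\PAGrp}(G, \mathbf{N})$, provided we verify the acyclicity hypothesis of the theorem.

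The principal obstacle is this acyclicity: one must show that if $P$ is projective in $\PAGrp$, then $\pi_0(P)$ is acyclic for $\hom_{\ProfAGrp}(-, N)$, i.e., $\Ext^p_{\ProfAGrp}(\pi_0(P), N) = 0$ for $p > 0$ and every zero-dimensional $\mathbf{N}$. To establish this, I would first observe that Lemma \ref{Lemma: Factorizing G,N to pi0G, N} exhibits $\pi_0$ precisely as the left adjoint of the fully faithful inclusion $\iota \colon \ProfAGrp \hookrightarrow \PAGrp$ sending a profinite group to the corresponding zero-dimensional proalgebraic group. I would then check that $\iota$ is exact: left exactness is automatic (it is a right adjoint), while right exactness follows because the subcategory of zero-dimensional proalgebraic groups is closed under taking quotients in $\PAGrp$. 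Consequently, its left adjoint $\pi_0$ preserves projectives, so $\pi_0(P)$ is projective in $\ProfAGrp$ whenever $P$ is projective in $\PAGrp$. Projectivity then yields $\Ext^p_{\ProfAGrp}(\pi_0(P), N) = 0$ for all $p > 0$ directly.

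For the isomorphism in the connected case, one inspects the $E_2$-page when $G$ is connected, so that $\pi_0(G) = 0$. Then $E_2^{p,0} = \Ext^p_{\ProfAGrp}(0, N) = 0$ for every $p$, and the only potential contribution to $\Ext^1_{\PAGrp}(G, \mathbf{N})$ on the diagonal $p + q = 1$ is $E_2^{0,1} = \ProfAGrp(\pi_1(G), N)$. For this term, the outgoing differential $d_2 \colon E_2^{0,1} \to E_2^{2,0} = 0$ is zero; all incoming differentials $d_r \colon E_r^{-r, r} \to E_r^{0,1}$ vanish because $E_r^{-r,r} = 0$ for $r \geq 1$; and higher outgoing differentials also vanish as they land in bidegrees with negative $q$-coordinate. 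Hence $E_\infty^{0,1} = E_2^{0,1}$ and $E_\infty^{1,0} = 0$, so the abutment filtration on $\Ext^1_{\PAGrp}(G, \mathbf{N})$ collapses to a single nonzero graded piece, yielding the claimed isomorphism $\Ext^1_{\PAGrp}(G, \mathbf{N}) \cong \ProfAGrp(\pi_1(G), N)$.
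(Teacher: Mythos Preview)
Your argument is correct and is the standard derivation of this spectral sequence via the Grothendieck composition-of-derived-functors machinery applied to the factorization of Lemma~\ref{Lemma: Factorizing G,N to pi0G, N}. The paper itself supplies no proof of this theorem; it is stated as a fact (ultimately going back to Serre's \emph{Groupes proalg\'ebriques}), so there is nothing against which to compare your approach beyond noting that it is the natural and expected one.

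Two minor remarks. First, the theorem as written in the paper has $\Ext^p(\pi_q^k(G), \mathbf{N})$ with target $\mathbf{N}$ in $\PAGrp$, whereas your spectral sequence naturally lands in $\Ext^p_{\ProfAGrp}(\pi_q(G), N)$; these coincide precisely because of the Corollary immediately preceding the definition of the higher homotopy groups (projective resolutions in $\PAGrp_{/k}^{0}$ are already projective resolutions in $\PAGrp_{/k}$), and it would be worth saying so explicitly. Second, your justification that $\iota$ is right exact (``closed under quotients'') is really the statement that cokernels of maps between zero-dimensional proalgebraic groups, computed in $\PAGrp$, remain zero-dimensional and hence agree with the cokernels computed in $\ProfAGrp$; phrasing it this way makes the exactness of $\iota$ transparent and then the preservation of projectives by the left adjoint $\pi_0$ follows immediately, giving the acyclicity hypothesis you need.
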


This will tell us, as usual, about the connectivity/hole based structure of proalgebraic groups and the way this relates with extensions by these fundamental groups. However, we should also have a notion of simply connected proalgebraic groups, which should play roughly the same role as simply connected groups in the classical sense.

\begin{definition}
A connected proalgebraic group $G$ is {\em simply connected} if $G$ has no nontrivial isogenies, i.e., if for any short exact sequence of proalgebraic groups
\[
\xymatrix{
0 \ar[r] & N \ar[r] & G^{\prime} \ar[r] & G \ar[r] & 0	
}
\]
where $G^{\prime}$ is connected and $N$ is zero-dimensional, then $N = 0$.
\end{definition}

This now leads us to the Local Class Field Theory of Serre. It proceeds by studying the category $\PAGrp_{/k}$ and its (higher) homotopy groups in the case in which $k = \overline{k}$, i.e., when $k$ is algebraically closed.

\section{The Local Class Field Theory of Serre}
Throughout this section we assume that $k$ is algebraically closed, i.e., we assume that $k = \overline{k}$ for some algebraic closure $\overline{k}$ of $k$. A deep theorem of Serre (cf.\@ Theorem 1, Section 2 of \cite{SerreCorps}) then illustrates that for any proalgberaic group $G$ over $k$, there is a simply connected group $\overline{G}$ together with a morphism $f:\overline{G} \to G$ such that $\pi_{0}(G) = \Coker(f)$ and $\pi_1(G) = \Ker(f)$. In this case, we can infer that the functor $\overline{G}$ is exact (Theorem 1, Section 2 of \cite{SerreCorps}), which allows us to deduce that $\pi_{n}^{k}(G) = 0$ for all proalgebraic groups $G$ and all $n \geq 2$. Thus the long exact homotopy sequence applied to the short exact sequence
\[
\xymatrix{
0 \ar[r] & G \ar[r] & H \ar[r] & K \ar[r] & 0
}
\]
gives us the exact sequence of homotopy groups
\[
\xymatrix{
0 \ar[r] & \pi_1(G) \ar[r] & \pi_1(H) \ar[r] & \pi_1(K) \ar[r]^{\delta_0} & \pi_0(G) \ar[r] & \pi_0(H) \ar[r] & \pi_0(K) \ar[r] & 0
}
\]
in this case. From here we can deduce the following Theorem that will, in essence, give us Serre's Local Class Field Theory.
\begin{Theorem}[cf.\@ \cite{SuzYosh}]\label{Theorem: LCFT of Serre}
If $k$ is algebraically closed, then there exists a canonical isomorphism of profinite groups
\[
\pi_1(\mathbf{U}_K) \cong \Gal(K^{\text{Ab}}/K).
\]
\end{Theorem}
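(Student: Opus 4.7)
The plan is to identify $\pi_{1}(\mathbf{U}_{K})$ and $\Gal(K^{\text{Ab}}/K)$ by arranging that both corepresent the same contravariant functor on finite abelian groups, and then to invoke Yoneda rigidity for profinite abelian groups. The first half of this identification comes essentially for free from the earlier setup: because $\mathbf{U}_{K}$ is connected, Theorem \ref{Theorem: Spectral Sequence for fundamental groups} provides, for every zero-dimensional proalgebraic group $\mathbf{N}$ with associated profinite abelian group $N$, the natural isomorphism
\[
\Ext^{1}(\mathbf{U}_{K}, \mathbf{N}) \cong \ProfAGrp\bigl(\pi_{1}(\mathbf{U}_{K}), N\bigr).
\]

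The heart of the argument is to construct, functorially in finite abelian $N$, a second natural isomorphism
\[
\Ext^{1}(\mathbf{U}_{K}, \mathbf{N}) \cong \Hom_{\text{cont}}\bigl(\Gal(K^{\text{Ab}}/K), N\bigr).
\]
Continuous maps $\Gal(K^{\text{Ab}}/K) \to N$ correspond to finite abelian Galois extensions $L/K$ together with an injection $\Gal(L/K) \hookrightarrow N$; since $k = \overline{k}$, every such $L/K$ is totally ramified, so $L$ has residue field $k$ and one can form the proalgebraic group $\mathbf{U}_{L}$ via the construction of Section 2. Given an isogeny $0 \to \mathbf{N} \to \mathbf{G} \to \mathbf{U}_{K} \to 0$ in $\PAGrp_{/k}$, one takes $\overline{k}$-points and uses $\mathbf{U}_{K}(\overline{k}) \cong U_{K}$ from Proposition \ref{Prop UK is UK} to obtain a principal $N$-bundle over $U_{K}$; a Lang--Steinberg-style argument, valid precisely because $k = \overline{k}$, then descends this bundle to a unique abelian étale $N$-cover of $\Spec K$, producing the extension $L/K$. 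The converse builds the isogeny out of the norm morphism $\mathbf{U}_{L} \to \mathbf{U}_{K}$, with the splitting $\mathbf{U}_{K} \cong \G_{m,k}^{(\infty)} \times \mathbf{U}_{K}/\mathbf{U}_{K}^{(1)}$ used to treat the tame (Kummer) contributions on $\G_{m,k}^{(\infty)}$ and the wild (Artin--Schreier--Witt) contributions on $\mathbf{U}_{K}^{(1)}$ separately; one then checks that the two assignments are mutually inverse and functorial in $N$.

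Chaining the two natural isomorphisms gives $\ProfAGrp\bigl(\pi_{1}(\mathbf{U}_{K}), N\bigr) \cong \Hom_{\text{cont}}\bigl(\Gal(K^{\text{Ab}}/K), N\bigr)$ naturally in finite abelian $N$. Since every profinite abelian group is the inverse limit of its finite abelian quotients, this natural equivalence of corepresented functors forces a canonical isomorphism of profinite groups $\pi_{1}(\mathbf{U}_{K}) \cong \Gal(K^{\text{Ab}}/K)$, as required. The main obstacle is the construction of the bijection in the heart step: this is the whole geometric content of Serre's local class field theory, and it is precisely where the hypothesis $k = \overline{k}$ is essential, because over a residue field with nontrivial Galois extensions one would additionally have to classify isogenies of $\mathbf{U}_{K}$ arising from $\Gal(\overline{k}/k)$ (unramified contributions). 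These are absent in the algebraically closed case, so that the classification of isogenies of $\mathbf{U}_{K}$ is purely by ramified abelian extensions of $K$.
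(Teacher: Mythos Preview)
Your outline is correct and lands on the same theorem, but the route differs from the paper's. The paper does not argue by Yoneda from the outset: it first builds an explicit homomorphism $\theta:\pi_1(\mathbf{U}_K)\to\Gal(K^{\text{Ab}}/K)$ by, for each finite abelian $L/K$, applying the homotopy long exact sequence to $0\to\Ker(\mathbf{N}_{L/K})\to\mathbf{U}_L\to\mathbf{U}_K\to 0$ and identifying $\pi_0(\Ker(\mathbf{N}_{L/K}))\cong\Gal(L/K)$ via the Tate cohomology computation $\widehat{H}^{-1}(\Gal(L/K),U_L)\cong\widehat{H}^{-2}(\Gal(L/K),\Z)$. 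Surjectivity of $\theta$ is then immediate from this construction, and injectivity is reduced (as in your approach) to showing $\theta^{\ast}$ is surjective on $\Hom(-,C_\ell)$ for cyclic $C_\ell$; the paper defers this existence step to Serre. Your Yoneda packaging is cleaner and makes the functoriality manifest, while the paper's version has the advantage of producing the reciprocity map $\theta$ concretely and making the role of the norm and Hilbert 90 explicit.

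One point to tighten: the phrase ``Lang--Steinberg-style argument\ldots descends this bundle to a unique abelian \'etale $N$-cover of $\Spec K$'' is not quite the right mechanism. Lang's theorem concerns Frobenius on groups over finite fields and does not directly produce extensions of $K$ from isogenies of $\mathbf{U}_K$ over an algebraically closed $k$. What actually carries the existence direction is exactly what you invoke two lines later: the decomposition $\mathbf{U}_K\cong\G_{m,k}^{(\infty)}\times\mathbf{U}_K^{(1)}$ together with the explicit Kummer classification of isogenies of $\G_{m,k}^{(\infty)}$ and the Artin--Schreier--Witt classification of isogenies of the pro-unipotent factor, matched against the corresponding extensions of $K$. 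I would drop the Lang--Steinberg reference and simply say that the existence step is carried out factor-by-factor via Kummer and Artin--Schreier--Witt theory, as in Section~6 of Serre's paper.
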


To prove this this, we first will collect some lemmas and propositions about the structure present in this situation and how $k$ being algebraically closed influences the theory (which in turn will make the Local Class Field Theory of Hazewinkel more clear as well). In what follows we will use Galois cohomological arguments without apology. For those interested in that theory, we defer the reader to \cite{SerreCorps} for purely Galois cohomological details, and to \cite{BrownCoh} for purely group cohomological details. Instead we will recall basic facts as we need them. For instance, if $A$ is a $G$-module for some Galois group $G$, then we recall that the Tate cohomology with coefficients in $A$ is given by, for all $n \in \Z$ with norm map $N:H_{0}(G,A) \to H^{0}(G,A)$ given by $a \mapsto \sum_{g \in G} ga$,
\[
\widehat{H}^{n}(G,A) := \begin{cases}
H^{n}(G,A) & {\text{if}}\, n \geq 1; \\
\Coker N & {\text{if}}\, n = 0; \\
\Ker N  & {\text{if}}\, n = -1; \\
H_{-(n+1)}(G,A) & {\text{if}}\, n \leq -2.
\end{cases}
\]
In particular, if $L/K$ is a finite Galois extension of $K$, then $L^{\ast}$ is naturally a left $\Gal(L/K)$-module and $\widehat{H}^{q}(\Gal(L/K),L^{\ast}) = 0$ for all $q \in \Z$.
\begin{remark}
Assume that $K$ is a field and that $L$ is a field extension of $K$. We say that $L$ (or $L/K$ if we wish to be particularly explicit) is a {\em {\Galf}} extension of $K$, or simply a {{\Galf}} extension, if and only if $L/K$ is a finite Galois extension. For example, the field $\Q(\sqrt{2})$ is a {{\Galf}} extension of $\Q$.
\end{remark}

Let us now recall/consider an important series of facts from the theory of local fields and Galois cohomology that are useful in our development of Serre's Local Class Field Theory. If $K$ is a complete valued field and if $L/K$ is a {\Galf} extension, then $L$ is also a complete valued field and the group of valuative units in $L$, $U_L$ has an analogous definition to $U_K$. Then there is a short exact sequence of $\Gal(L/K)$-modules
\[
\xymatrix{
0 \ar[r] & U_L \ar[r] & L^{\ast} \ar[r] & \Z\ar[r] & 0
}
\]
where $\Z$ has a trivial $\Gal(L/K)$-module structure (or, if you prefer, the module structure is induced by the augmentation map of rings $\epsilon:\Z[\Gal(L/K)] \to \Z$ given by $n_gg \mapsto n_g$). Since $\widehat{H}^{q}(\Gal(L/K),L^{\ast}) = 0$ for all $q \in \Z$, it then follows from the cohomology long exact sequence that
\[
\widehat{H}^{q}(\Gal(L/K),U_L) \cong \widehat{H}^{q-1}(\Gal(L/K),U_K)
\]
for all $q \in \Z$. In particular, if we set $q = 0$, it follows that the norm map $N_{L/K}:U_L \to U_K$ is surjective and hence the diagram
\[
\xymatrix{
0 \ar[r] & \Ker N_{L/K} \ar[r] & U_L \ar[r]^{N_{L/K}} & U_K \ar[r] & 0	
}
\]
is an exact sequence of Abelian groups.

We proceed with an aim to prove Theorem \ref{Theorem: LCFT of Serre} now by proving some lemmas that will be necessary for proving Proposition \ref{Prop: Surjection from fundamental group  to Galois group} below. The first will be to show that we can give $\mathbf{U}_L$ the structure of a proalgebraic group in the same way that we gave $\mathbf{U}_K$ the structure of a proalgebraic group, and moreover in such a way that the norm map $N_{L/K}:U_{L} \to U_K$ lifts to a map of proalgebraic groups. 

\begin{lemma}\label{Lemma: UL is proalg group and norm map is morphisme}
The group $U_L$ may be given the structure of a proalgebraic group over $k$ in such a way that the norm map $N_{L/K}:U_L \to U_K$ lifts to a morphism $\mathbf{N}_{L/K}:\mathbf{U}_L \to \mathbf{U}_K$ of proalgebraic groups. In particular, the subgroup $\Ker(N_{L/K})$ lifts to a proalgebraic group over $k$.
\end{lemma}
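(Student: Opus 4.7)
The plan is to mimic the construction of $\mathbf{U}_K$ in the setting of $L$, promote the norm to a compatible family of morphisms of quasialgebraic groups at each truncation level, and take inverse limits; the kernel then falls out from the fact that $\PAGrp_{/k}$ is Abelian. First, observe that $L$ is itself a complete discrete valuation field whose valuation ring $S$ is a finite $R$-module, with maximal ideal $\mathfrak{n}$ and residue field $\ell$ finite over $k$. Greenberg's construction applies equally well to each Artinian quotient $S/\mathfrak{n}^m$: choose $m$ large enough so that $W_m(k) \to S/\mathfrak{n}^m$ makes the latter a finitely generated $W_m(k)$-module (via the finite extension $\ell/k$), and then transport the variety structure along a Witt-vector decomposition exactly as was done for $R/\mathfrak{m}^n$. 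Mirroring the construction of $\mathbf{U}_K$, set
\[
\mathbf{U}_L := \lim_{\longleftarrow} \G_{m,S/\mathfrak{n}^m}^{(\infty)}.
\]
The argument of Proposition \ref{Prop UK is UK} then shows that $\mathbf{U}_L$ is a commutative proalgebraic group over $k$ with $\mathbf{U}_L(k) \cong U_L$.

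Next, I would promote the classical norm to the morphism $\mathbf{N}_{L/K}$ by building it at every finite level. Let $e$ be the ramification index of $L/K$, so that $\mathfrak{m}S = \mathfrak{n}^e$ and $S/\mathfrak{n}^{en}$ is a finitely generated $R/\mathfrak{m}^n$-module. The norm $N_{L/K}(x) = \det(m_x)$, with $m_x$ denoting multiplication-by-$x$ on $S/\mathfrak{n}^{en}$ regarded as an $R/\mathfrak{m}^n$-module, is then a polynomial expression in the matrix entries of $m_x$ in any fixed $R/\mathfrak{m}^n$-generating set; since those entries depend polynomially on $x$ in the transported Greenberg coordinates, $N_{L/K}$ descends to a morphism of $k$-varieties $\G_{m,S/\mathfrak{n}^{en}} \to \G_{m,R/\mathfrak{m}^n}$ for every $n$. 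These morphisms are manifestly compatible with the transition maps of the two towers, since the norm commutes with further truncation. Applying the functor $(-)^{(\infty)}$ and then passing to the inverse limit (using that $n \mapsto en$ is cofinal) produces the desired morphism
\[
\mathbf{N}_{L/K} : \mathbf{U}_L \longrightarrow \mathbf{U}_K
\]
in $\PAGrp_{/k}$, and by construction it recovers $N_{L/K}$ on global $k$-points.

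Finally, because $\PAGrp_{/k}$ is an Abelian category, the kernel $\Ker(\mathbf{N}_{L/K})$ exists as a proalgebraic group over $k$, and since the functor $(-)(k)$ commutes with limits and the kernel is an equalizer, its global $k$-points recover $\Ker(N_{L/K})$. The step I expect to be the main obstacle is the polynomiality of the norm in Greenberg coordinates: one must verify that a Witt-vector decomposition of $S/\mathfrak{n}^{en}$ as a $W_n(k)$-module is sufficiently compatible with the $R/\mathfrak{m}^n$-module structure for both the entries of $m_x$ and the resulting determinant to be polynomial in the affine coordinates inherited from Greenberg's construction. This amounts to a careful relative version of the transport-of-structure argument already used for $R/\mathfrak{m}^n$, but it is the place where the cleanest-looking statement in the lemma hides the most bookkeeping.
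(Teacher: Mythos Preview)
Your argument is correct, but it follows a different route from the paper's. The paper works in the Serre setting where $k=\overline{k}$, so the residue field $\ell$ of $L$ is already $k$; it then invokes the Teichm{\"u}ller splitting
\[
\mathbf{U}_K \cong \G_{m,k}^{(\infty)} \times \mathbf{U}_K^{(1)}, \qquad \mathbf{U}_L \cong \G_{m,k}^{(\infty)} \times \mathbf{U}_L^{(1)},
\]
and defines $\mathbf{N}_{L/K}$ componentwise: the identity on the $\G_{m,k}^{(\infty)}$ factor and a lift of $N_{L/K}|_{U_L^{(1)}}$ on the principal-unit factor. You instead realize the norm at each truncation level as the determinant of multiplication, show this is polynomial in Greenberg coordinates, and pass to the inverse limit along the cofinal system $n\mapsto en$. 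Your approach is the more robust one: it does not depend on the Teichm{\"u}ller splitting, handles the two factors uniformly, and would go through verbatim without assuming $k$ algebraically closed. The paper's approach is shorter because it piggybacks on a structural decomposition already established, but it leaves the lift on $\mathbf{U}_L^{(1)}$ just as unexplained as your ``polynomiality in Greenberg coordinates'' step, so the bookkeeping you flag is present in both arguments. Your final paragraph, deducing the kernel from the Abelianness of $\PAGrp_{/k}$, matches the paper exactly.
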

\begin{proof}
The fact that $U_L$ may be made into a proalgebraic group over $k$ follows mutatis mutandis from the construction of $\mathbf{U}_K$ and the fact that $k = \overline{k}$. In particular, if $\ell$ is the residue field of $L$, $\ell/k$ is an algebraic field extension. Thus $\ell = k$ and $\G_{m,\ell}^{(\infty)} = \G_{m,k}^{(\infty)}$. The fact that $N_{L/K}$ may then be lifted follows from the fact that
\[
\mathbf{U}_{K} \cong \mathbf{U}_{K}^{(1)} \times \G_{m,k}^{(\infty)}
\]
and
\[
\mathbf{U}_{L} \cong \mathbf{U}_{L}^{(1)} \times \G_{m,k}^{(\infty)},
\]
so we can define $\mathbf{N}_{L/K}$ by restricting $N_{L/K}$ to $U_{L}^{(1)}$, verifying its image lands in $U_{K}^{(1)}$, and then lifting to $\mathbf{N}_{L/K}^{(1)}:\mathbf{U}_{L}^{(1)} \to \mathbf{U}_{K}^{(1)}$, and then by taking its component on $\G_{m,k}^{(\infty)}$ to be the identity. We then have that $\Ker(\mathbf{N}_{L/K})$ is a closed subscheme of $\mathbf{U}_{L/K}$, and it is routine to verify that
\[
\Ker(\mathbf{N}_{L/K})(k) = \Ker(N_{L/K}).
\]
This completes the proof of the lemma.
\end{proof}

Let us now consider the subgroup
\[
\Ker(N_{L/K})^{0} := \left\langle \frac{\sigma u}{u} \; : \; \sigma \in \Gal(L/K), u \in U_{L} \right\rangle
\]
of $Ker(N_{L/K})$. Note that this is the group of $k$-rational points of the maximal connected subgroup $\Ker(\mathbf{N}_{L/K})_{0}$ of $\Ker(\mathbf{N}_{L/K})$, i.e., 
\[
\Ker(\mathbf{N}_{L/K})^{0}(k) = \Ker(N_{L/K})^{0}.
\]
Then, by Hilbert 90, it follows that
\begin{align*}
\frac{\Ker(N_{L/K})}{\Ker(N_{L/K})^{0}} &\cong \widehat{H}^{-1}(\Gal(L/K),U_{L}) \cong \widehat{H}^{-2}(\Gal(L/K),\Z) \cong H_{1}(\Gal(L/K),\Z) \\
&\cong \frac{\Gal(L/K)}{[\Gal(L/K),\Gal(L/K)]} =: \Gal(L/K)^{\text{Ab}}.
\end{align*}
Thus we have an isomorphism
\[
\Gal(L/K)^{\text{Ab}} \cong \frac{\Ker(\mathbf{N}_{L/K}(k)}{\Ker(\mathbf{N}_{L/K})^{0}(k)}.
\]
Since $\Ker(\mathbf{N}_{L/K})$ is connected, as it is a closed subgroup of a connected group scheme, we also have that the homotopy group
\[
\pi_{0}(\Ker(\mathbf{N}_{L/K})) = \frac{\Ker(\mathbf{N}_{L/K})(k)}{\Ker(\mathbf{N}_{L/K})^0(k)}
\]
is finite. In particular, the calculation above shows that there is an association
\[
\pi_{0}(\Ker(\mathbf{N}_{L/K})) \sim \Gal(L/K)^{\text{Ab}}.
\]
We will use this association, together with the observation that if a Galois field extension $L/K$ is Abelian then $\Gal(L/K) \cong \Gal(L/K)^{\text{Ab}}$, to prove the proposition below.

\begin{proposition}\label{Prop: Surjection from fundamental group  to Galois group}
There is an epimorphism of profinite groups
\[
\theta:\pi_1(\mathbf{U}_{K}) \to \Gal(K^{\text{Ab}}/K).
\]
\end{proposition}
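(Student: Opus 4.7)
The plan is to extract the map $\theta$ from the connecting homomorphism of the long exact homotopy sequence associated to the short exact sequence of proalgebraic groups obtained by taking kernels of norm maps, then pass to the limit over all finite abelian extensions. Concretely, for each finite abelian extension $L/K$ contained in $K^{\text{Ab}}$, Lemma \ref{Lemma: UL is proalg group and norm map is morphisme} produces a short exact sequence
\[
\xymatrix{
0 \ar[r] & \Ker(\mathbf{N}_{L/K}) \ar[r] & \mathbf{U}_L \ar[r]^{\mathbf{N}_{L/K}} & \mathbf{U}_K \ar[r] & 0
}
\]
in $\PAGrp_{/k}$, where surjectivity on $k$-points comes from the Tate cohomology vanishing $\widehat{H}^{0}(\Gal(L/K),U_L) = 0$ and surjectivity as a morphism of proalgebraic groups can be verified on each finite level quotient.

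Next I would apply the long exact sequence of homotopy groups. Because $k$ is algebraically closed, Serre's simple connectedness theorem gives $\pi_n(G) = 0$ for every proalgebraic group $G$ and every $n \geq 2$, so the long exact sequence collapses to
\[
\xymatrix{
0 \ar[r] & \pi_1(\Ker\mathbf{N}_{L/K}) \ar[r] & \pi_1(\mathbf{U}_L) \ar[r] & \pi_1(\mathbf{U}_K) \ar[r]^-{\delta_0} & \pi_0(\Ker\mathbf{N}_{L/K}) \ar[r] & \pi_0(\mathbf{U}_L) \ar[r] & \pi_0(\mathbf{U}_K) \ar[r] & 0.
}
\]
Since $\mathbf{U}_K$ and $\mathbf{U}_L$ are both connected (as shown in the previous section), their zeroth homotopy groups vanish, and $\delta_0$ becomes a surjection $\pi_1(\mathbf{U}_K) \twoheadrightarrow \pi_0(\Ker\mathbf{N}_{L/K})$. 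Using the Hilbert 90/Tate cohomology computation already established, namely
\[
\pi_0(\Ker\mathbf{N}_{L/K}) \cong \Gal(L/K)^{\text{Ab}},
\]
and the fact that $L/K$ is itself abelian, this produces a surjection
\[
\theta_{L/K} : \pi_1(\mathbf{U}_K) \twoheadrightarrow \Gal(L/K).
\]

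I would then verify that the family $\{\theta_{L/K}\}$ is compatible: if $L \subseteq L'$ are two finite abelian extensions of $K$, the norm maps and the induced maps on kernels are compatible with the Galois restriction $\Gal(L'/K) \to \Gal(L/K)$, and naturality of the long exact sequence of derived functors of $\pi_0$ forces compatibility of the connecting maps $\delta_0$. Consequently, the $\theta_{L/K}$ assemble into a single morphism of profinite groups
\[
\theta : \pi_1(\mathbf{U}_K) \longrightarrow \lim_{\substack{\longleftarrow \\ L/K \text{ fin.\ ab.}}} \Gal(L/K) \cong \Gal(K^{\text{Ab}}/K).
\]
Finally, because each $\theta_{L/K}$ is surjective and the source $\pi_1(\mathbf{U}_K)$ is profinite (hence compact, so its image in the limit is closed and the inverse system satisfies Mittag-Leffler), the limit map $\theta$ is itself surjective.

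The main obstacle is the compatibility step: verifying that the connecting maps $\delta_0$ genuinely intertwine with restriction of Galois groups requires chasing through the identification $\pi_0(\Ker\mathbf{N}_{L/K}) \cong \Gal(L/K)^{\text{Ab}}$ along a morphism of short exact sequences of proalgebraic groups induced by the intermediate norm $\mathbf{N}_{L'/L}$. Everything else (collapse of the long exact sequence, connectedness of $\mathbf{U}_K$, surjectivity of norms) is either established earlier in the excerpt or follows from classical Tate cohomology, so the real work is the naturality/compatibility diagram that makes the $\theta_{L/K}$ into a cone over the Galois tower.
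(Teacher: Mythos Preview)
Your approach is essentially the same as the paper's: both extract the connecting map $\delta_0$ from the long exact homotopy sequence attached to $0 \to \Ker\mathbf{N}_{L/K} \to \mathbf{U}_L \to \mathbf{U}_K \to 0$, identify $\pi_0(\Ker\mathbf{N}_{L/K})$ with $\Gal(L/K)$, and pass to the limit over finite abelian $L/K$. Your version is in fact more careful at two points---you deduce surjectivity of $\theta_{L/K}$ from the vanishing of $\pi_0(\mathbf{U}_L)$ (the paper gives a looser ``nonzero map from infinite to finite'' remark), and you invoke compactness to justify that the inverse limit of surjections remains surjective---both of which are improvements over the paper's sketch.
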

\begin{proof}
Observe that
\[
\Gal(K^{\text{Ab}}/K) = \lim_{\substack{\longleftarrow \\ L/K\, {\text{{\Galf},\, Abelian}}}} \Gal(L/K) \cong \lim_{\substack{\longleftarrow \\ L/K\, {\text{{\Galf},\, Abelian}}}}\pi_0(\Ker(\mathbf{N}_{L/K})).
\]
We now argue locally, i.e., based on each term of the filtration determining the limit above. Fix an Abelian {\Galf} extension $L/K$. From the homotopy long exact sequence we have that there is a natural map
\[
\pi_1^{k}(\mathbf{U}_K) \to \pi_0(\Ker(\mathbf{N}_{L/K}));
\]
since $\pi_0(\mathbf{N}_{L/K}) \sim \Gal(L/K)^{\text{Ab}} = \Gal(L/K)$, we can replace $\pi_0^{k}(\Ker(\mathbf{N}_{L/K}))$ with $\Gal(L/K)$ and obtain a map of profinite groups
\[
\pi_1(\mathbf{U}_K) \to \Gal(L/K);
\]
this map may be shown to be an epimorphism by noting that it is first nonzero, and then that $\pi_1(\mathbf{U}_K)$ is infinite and $\Gal(L/K) = \Gal(L/K)$ is finite. Taking the limit in $L$ then allows us to conclude that there is an epimorphism
\[
\pi_1(\mathbf{U}_K) \to \Gal(K^{\text{Ab}}/K),
\]
which establishes the proposition.
\end{proof}
\begin{proposition}\label{Prop: Serre existence thm}
The map $\theta:\pi_1^{k}(\mathbf{U}_K) \to \Gal(K^{\text{Ab}}/K)$ is monic.
\end{proposition}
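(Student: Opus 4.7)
The plan is to prove $\ker\theta = 0$ by first using the long exact homotopy sequence to identify the kernel as an explicit intersection, and then combining Theorem \ref{Theorem: Spectral Sequence for fundamental groups} with a classical existence result to show this intersection is trivial.

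First, for each finite abelian Galois extension $L/K$, I would apply the long exact homotopy sequence to the short exact sequence
$$0 \to \Ker(\mathbf{N}_{L/K}) \to \mathbf{U}_L \to \mathbf{U}_K \to 0$$
from Lemma \ref{Lemma: UL is proalg group and norm map is morphisme}. Since both $\mathbf{U}_L$ and $\mathbf{U}_K$ are connected (so $\pi_0 = 0$), and since $\pi_0(\Ker(\mathbf{N}_{L/K})) \cong \Gal(L/K)^{\text{Ab}} = \Gal(L/K)$ for $L/K$ abelian, the tail of the sequence reduces to
$$\pi_1(\mathbf{U}_L) \to \pi_1(\mathbf{U}_K) \xrightarrow{\theta_L} \Gal(L/K) \to 0.$$
Consequently $\ker\theta_L = \im(\pi_1(\mathbf{U}_L) \to \pi_1(\mathbf{U}_K))$, and since $\theta$ is the inverse limit of the $\theta_L$ over all such $L$, we obtain
$$\ker\theta \;=\; \bigcap_{L/K\text{ fin.\ ab.}} \im\!\big(\pi_1(\mathbf{U}_L) \to \pi_1(\mathbf{U}_K)\big).$$

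To show this intersection is trivial, I would dualize. Since $\mathbf{U}_K$ is connected, Theorem \ref{Theorem: Spectral Sequence for fundamental groups} yields $\Ext^{1}(\mathbf{U}_K, \mathbf{N}) \cong \ProfAGrp(\pi_1(\mathbf{U}_K), N)$ for every zero-dimensional proalgebraic group $\mathbf{N}$. Because $\pi_1(\mathbf{U}_K)$ and $\Gal(K^{\text{Ab}}/K)$ are both profinite, injectivity of $\theta$ is equivalent by Pontryagin duality to surjectivity of the pullback $\theta^{\ast}$ on continuous characters into $\Q/\Z$. Under the above Ext isomorphism, such characters of $\pi_1(\mathbf{U}_K)$ correspond to finite isogenies of $\mathbf{U}_K$ by constant groups $\Z/n\Z$, while the characters of $\Gal(K^{\text{Ab}}/K)$ correspond to finite cyclic extensions of $K$; the problem thus reduces to showing that every finite isogeny cover of $\mathbf{U}_K$ is detected by some finite abelian extension $L/K$, equivalently that for each nonzero $\xi \in \pi_1(\mathbf{U}_K)$ some $\theta_L$ separates $\xi$ from zero.

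The main obstacle is this final step, which amounts to the local existence theorem: every open subgroup of finite index in $U_K$ is a norm subgroup $N_{L/K}(U_L)$ for some finite abelian $L/K$. I would establish it by exploiting the Teichm{\"u}ller decomposition $\mathbf{U}_K \cong \G_{m,k}^{(\infty)} \times \mathbf{U}_K^{(1)}$: the finite isogenies of the $\G_{m,k}^{(\infty)}$-factor are classified by Kummer theory (and, in residue characteristic $p$, Artin--Schreier--Witt theory) over the algebraically closed $k$, while those of the principal-unit factor are controlled by the successive quotients $\mathbf{U}_K^{(n)}/\mathbf{U}_K^{(n+1)}$, which are perfections of $\G_{a,k}$. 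Translating each such isogeny back to a genuine abelian extension of $K$ via faithfulness of the rational-points functor on the connected proalgebraic subcategory over $k = \overline{k}$ then completes the argument.
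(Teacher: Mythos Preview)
Your strategy coincides with the paper's: both reduce injectivity of $\theta$ to surjectivity of the dual map $\theta^{\ast}$ on homs into finite cyclic groups via Theorem \ref{Theorem: Spectral Sequence for fundamental groups}, and both identify the remaining content as the local existence theorem, which the paper simply defers to Section~6 of \cite{SerreCorps}. One small correction to your final paragraph: Artin--Schreier--Witt theory governs the $p$-primary isogenies of the \emph{principal-unit} factor $\mathbf{U}_K^{(1)}$ (which is pro-unipotent, filtered by copies of $\G_{a,k}^{(\infty)}$), not of $\G_{m,k}^{(\infty)}$; over algebraically closed $k$ of characteristic $p$ the perfection $\G_{m,k}^{(\infty)}$ has no nontrivial $p$-isogenies, since $\mu_{p}$ is infinitesimal and is killed by passing to the perfection, so Kummer theory alone handles that factor.
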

\begin{proof}[Sketch]
This proof is particularly technical, so we only sketch it here. Ultimately it boils down to showing that for all finite cyclic groups $C_{\ell}$ of prime order $\ell$ with $\gcd(\ell, p) = 1$ that
\[
\theta^{\ast}:\ProfAGrp(\Gal(K^{\text{Ab}}/K),C_{\ell}) \to \ProfAGrp_{/k}(\pi_1^{k}(\mathbf{U}_K),C_{\ell})
\]
is an epimorphism in $\Set$, where we regard $N$ as a profinite group with trivial limit structure. For precise details, we defer the reader to section $6$ of \cite{SerreCorps}.
\end{proof}

We finally have all the necessary ingredients for the proof of Serre's local class field theory: Theorem \ref{Theorem: LCFT of Serre}.
\begin{proof}[Proof of Theorem \ref{Theorem: LCFT of Serre}]
The map $\theta:\pi_1^{k}(\mathbf{U}_K) \to \Gal(K^{\text{Ab}}/K)$ is an epimorphism by  Proposition \ref{Prop: Surjection from fundamental group  to Galois group}, and monic by Proposition \ref{Prop: Serre existence thm}. Then, since $\ProfAGrp$ is an Abelian category and hence a regular category, it follows that $\theta$ is an isomorphism and we are done.
\end{proof}

\section{The Local Class Field Theory of Hazewinkel}
From here we develop the LCFT of Hazewinkel. It involves some further nontrivial Galois cohomology and a familiarity with local fields that we take for granted here. However, whenever possible, we will present the theory of ramification and local fields with maximal generality, as, for instance, the splitting of the map $\Gal(\overline{K}/K) \to \Gal(\overline{k}/k)$ does not depend on $K$ and $k$ being of mixed characteristic, but instead on the $p$-cohomological dimension of $\Gal(\overline{k}/k)$ being bounded above by $1$. Much of this theory is a little more delicate than the previous section, as we now have to worry about what happens when the residue field $k$ of the local field $K$ is not algebraically closed (and in particular so we have a nontrivial residual Galois action at play). The theorem that describes Hazewinkel's local class field theory takes this more delicate situation and deduces it from the local class field theory of Serre.

\begin{Theorem}\label{Theorem: LCFT of Hazewinkel}
For a perfect field $k$, there is an isomorphism of proalgebraic groups over $k$
\[
\pi_1^{k}(\mathbf{U}_K) \cong \Iner(K^{\text{Ab}}/K).
\]
\end{Theorem}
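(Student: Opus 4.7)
The plan is to reduce Hazewinkel's theorem to Serre's (Theorem \ref{Theorem: LCFT of Serre}) by a Galois-descent argument along the maximal unramified extension. Let $\overline{k}$ be an algebraic closure of $k$, set $\Gamma := \Gal(\overline{k}/k)$, and let $\hat{K}$ denote the completion of the maximal unramified extension $K^{\text{nr}}$ of $K$ inside a fixed algebraic closure $\overline{K}$. Then $\hat{K}$ is a complete discretely valued field with residue field $\overline{k}$, and $\hat{K}/K$ is Galois with group $\Gamma$. Since $\overline{k}$ admits no nontrivial algebraic extensions, every finite abelian extension of $\hat{K}$ is totally ramified, so one identifies
\[
\Gal(\hat{K}^{\text{Ab}}/\hat{K}) \cong \Iner(K^{\text{Ab}}/K)
\]
canonically, as the kernel of $\Gal(K^{\text{Ab}}/K) \twoheadrightarrow \Gal(\overline{k}/k)$.

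Next, I would establish the base-change compatibility $\mathbf{U}_K \times_{\Spec k} \Spec \overline{k} \cong \mathbf{U}_{\hat{K}}$ in $\PAGrp_{/\overline{k}}$. Unwinding Proposition \ref{Prop UK is UK}, both sides are inverse limits of the groups $\G_{m,-}^{(\infty)}$ attached to the truncations of the valuation ring, and the Greenberg/Witt-vector realization of $R/\mathfrak{m}^n$ as a $k$-variety base-changes in the obvious way to the realization of $\hat{R}/\hat{\mathfrak{m}}^n$ as an $\overline{k}$-variety, since passage to $\hat{K}$ is exactly unramified base change on residue fields. Granted this, Theorem \ref{Theorem: LCFT of Serre} applied to $\hat{K}$ yields
\[
\pi_1^{\overline{k}}(\mathbf{U}_K \times_{\Spec k} \Spec \overline{k}) \;\cong\; \pi_1^{\overline{k}}(\mathbf{U}_{\hat{K}}) \;\cong\; \Iner(K^{\text{Ab}}/K),
\]
and both sides carry canonical, and canonically compatible, $\Gamma$-actions---on the right through conjugation in the short exact sequence $0 \to \Iner(K^{\text{Ab}}/K) \to \Gal(K^{\text{Ab}}/K) \to \Gal(\overline{k}/k) \to 0$.

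Finally, since $\pi_0$ is a colimit-type functor and $\pi_1 = L_1 \pi_0$, the comparison map $\pi_1^k(\mathbf{U}_K) \to \pi_1^{\overline{k}}(\mathbf{U}_K \times_{\Spec k} \Spec \overline{k})$ is $\Gamma$-equivariant, and I would show it identifies $\pi_1^k(\mathbf{U}_K)$ with $\Iner(K^{\text{Ab}}/K)$ as proalgebraic groups over $k$. The hard part is exactly this last descent step: one must verify that the formation of $\pi_1 = L_1 \pi_0$ commutes with the (non-finite) base change along $k \to \overline{k}$ in a $\Gamma$-equivariant sense. The cleanest route is to invoke Theorem \ref{Theorem: Spectral Sequence for fundamental groups} to convert the statement into a comparison of $\Ext^1$-groups in $\PAGrp_{/k}$ and in $\PAGrp_{/\overline{k}}$, and then translate this into a computation of $\Gamma$-invariant extensions of $\mathbf{U}_{\hat{K}}$ by zero-dimensional proalgebraic groups; the Hilbert 90 and Tate-cohomology vanishing inputs $\widehat{H}^q(\Gal(L/K), L^\ast) = 0$ used in the Serre case remain available after replacing $K$ by $\hat{K}$ throughout, and the analogues of Propositions \ref{Prop: Surjection from fundamental group  to Galois group} and \ref{Prop: Serre existence thm} then yield the descended surjectivity and injectivity.
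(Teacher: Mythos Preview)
Your overall strategy --- reduce to Serre's theorem by base-changing along $k \to \overline{k}$ and then descending --- is exactly the paper's, and your identification $\mathbf{U}_K \times_{\Spec k} \Spec \overline{k} \cong \mathbf{U}_{\hat{K}}$ is correct. But there is a genuine error in the first paragraph that undermines the rest of the argument.

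You assert that $\Gal(\hat{K}^{\text{Ab}}/\hat{K}) \cong \Iner(K^{\text{Ab}}/K)$ canonically. This is false. Writing $I_K := \Gal(K^{\text{sep}}/K^{\text{un}})$ for the absolute inertia group, one has $\Gal(\hat{K}^{\text{Ab}}/\hat{K}) \cong I_K^{\text{Ab}}$, whereas $\Iner(K^{\text{Ab}}/K)$ is the \emph{image} of $I_K$ in $\Gal(K^{\text{sep}}/K)^{\text{Ab}}$. The latter is a proper quotient of the former in general: one must kill not only $[I_K,I_K]$ but also the commutators coming from the $\Gamma$-action on $I_K$. Concretely, already the tame quotient of $I_K$ (Lemma \ref{Lemma: Iso of Gal groups}) carries a nontrivial $\Gamma$-action, so $I_K^{\text{Ab}}$ and its image in $G_K^{\text{Ab}}$ differ.

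The paper repairs this by taking $\Gamma$-\emph{coinvariants} of both sides of Serre's isomorphism $\pi_1^{\overline{k}}(\mathbf{U}_{\hat{K}}) \cong \Gal(\hat{K}^{\text{Ab}}/\hat{K})$. On the Galois side, the identification $\Gal(\hat{K}^{\text{Ab}}/\hat{K})_{\Gamma} \cong \Iner(K^{\text{Ab}}/K)$ is Lemma \ref{Lemma: Gal coinvariants are intertia}, and it rests on the splitting $\Gal(K^{\text{sep}}/K) \cong I_K \rtimes \Gamma$ of Proposition \ref{Prop: Splitting of GalKbar to Galkbar} and Corollary \ref{Cor: splitting of GK to Gk}; this is a substantial input (ultimately $\cd_p(\Gamma) \leq 1$) that your proposal never invokes. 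On the fundamental-group side, the paper obtains $\pi_1^{\overline{k}}(\mathbf{U}_{\hat{K}})_{\Gamma} \cong \pi_1^{k}(\mathbf{U}_K)$ by Pontryagin-dualizing and reading off $\Ext^1_k(\mathbf{U}_K,\Q/\Z) \cong \Ext^1_{\overline{k}}(\mathbf{U}_{\hat{K}},\Q/\Z)^{\Gamma}$ from a base-change spectral sequence; your phrase ``$\Gamma$-invariant extensions'' gestures at this, but the passage from invariants on $\Ext^1$ to coinvariants on $\pi_1$ must be made explicit. Finally, your closing suggestion --- rerunning Propositions \ref{Prop: Surjection from fundamental group  to Galois group} and \ref{Prop: Serre existence thm} directly over $k$ --- is not what the paper does and would amount to proving the theorem from scratch rather than descending from Serre's case.
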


We will prove this theorem by reducing it to Serre's Theorem \ref{Theorem: LCFT of Serre} above; this strategy follow the exposition of Suzuki and Yoshida in \cite{SuzYosh}. To do this, however, we will need to recall some Galois-theoretic results about local fields. In particular, we will follow some exercises of \cite{SerreGaloisCoh} that show the short reduction morphism
\[
\Gal(\overline{K}/K) \to \Gal(\overline{k}/k)
\]
splits. The strategy begins by controlling and understanding the tamely ramified extensions of $K$ and then using this to handle the wilder ramified extension fields in smaller steps.
\begin{definition}[\cite{SerreGaloisCoh}]
If $K$ is a discrete valuation field with residue field $k$ of positive characteristic $p > 0$, then we say that a {\Galf} extension $L/K$ is {\em tamely ramified} if the relative inertia group $\Iner(L/K)$ is of order coprime to $p$.
\end{definition}
It is a reasonably routine exercise to show that the following are equivalent for a finite tamely ramified field extension $L/K$. However, in the interest of getting a better understanding of tamely ramified extensions (and what distinguishes them from their wilder cousins), we present a proof of the equality of the different perspectives on tamely ramified fields. In the proposition below, we let $R$ be the ring of integers of $K$ and $S$ be the ring of integers of $L$. The maximal ideal of $R$ will be denoted $\pfrak$, while the maximal ideal of $S$ will be written $\qfrak$. Note that by elementary arguments there is an equality of $S$-ideals
\[
S\pfrak = \qfrak^{e},
\]
where $\pi$ is a uniformizer of $R$ and $e = v_{L}(\pi)$ is the ramification index of $L/K$. Finally, we will write $\ell/k$ to be the  corresponding induced field extension based on the canonical maps $R/\pfrak \to S/\qfrak$.
\begin{proposition}\label{Prop: Equivalent ways of seeing tamely ramified extensions}
The following properties are equivalent:
\begin{enumerate}
	\item The field extension $\ell/k$ is separable and $\gcd(e,p) = 1$;
	\item The order of the inertia group $\Iner(L/K)$ is coprime to $p$;
	\item If $L^{\prime}$ is the maximal subextension $L/L^{\prime}/K$ unramified over $K$, then $\ell/k$ is separable and $p$ does not divide $[L:L^{\prime}]$.
\end{enumerate}
\end{proposition}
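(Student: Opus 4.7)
The plan is to reduce all three statements to a single structural decomposition of the inertia group and then read off the equivalences. Write $e = e(L/K)$ for the ramification index, $f = [\ell:k]$ for the residue degree, and factor $f = f_s \cdot f_i$ with $f_s = [\ell:k]_s$ the separable part and $f_i = [\ell:k]_i = p^a$ the inseparable part of the residue extension.

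First I would establish the standard structural facts about $\Iner(L/K)$: (a) the maximal subextension $L^{\prime}$ of $L/K$ unramified over $K$ is precisely the fixed field of $\Iner(L/K)$, so that $\lvert \Iner(L/K)\rvert = [L:L^{\prime}]$; (b) $L^{\prime}/K$ is unramified of degree $[L^{\prime}:K] = f_s$, with residue field equal to the maximal separable subextension $\ell_s$ of $\ell/k$; and (c) the extension $L/L^{\prime}$ has ramification index $e$ and purely inseparable residue extension $\ell/\ell_s$ of degree $p^a$, giving $\lvert \Iner(L/K)\rvert = [L:L^{\prime}] = e \cdot p^a$.

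With this in hand, the equivalence is a matter of unpacking. Condition (1) asserts $p \nmid e$ together with $a = 0$. Condition (2) asserts $\gcd(e p^a, p) = 1$, which is the same conjunction. Condition (3) asserts $a = 0$ (so that $\ell/k$ is separable) together with $p \nmid [L:L^{\prime}] = e p^a = e$, yet again the same conjunction. Hence (1), (2), and (3) are equivalent.

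The main obstacle lies in steps (b) and (c): in the generality considered here, where $k$ need not be perfect, one must argue carefully that $L^{\prime}$ realizes exactly the separable part of the residue extension. I would appeal to Hensel's lemma to lift any finite separable subextension $k(\bar\alpha) \subseteq \ell$ to an unramified subextension of $L/K$ of matching degree, and conversely observe that any unramified subextension has separable residue extension by the standard definition of unramified. The identity $[L:L^{\prime}] = e \cdot p^a$ then follows from multiplicativity of ramification indices and residue degrees in towers combined with $[L^{\prime}:K] = f_s$ and $[L:K] = ef$. Depending on how much ramification theory the reader is expected to have at hand, one may either cite these facts from Serre's \emph{Local Fields} directly, or sketch the Hensel-theoretic construction of $L^{\prime}$ inline to keep the argument self-contained.
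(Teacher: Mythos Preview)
Your proposal is correct and rests on the same core identity the paper uses, namely $\lvert \Iner(L/K)\rvert = e\cdot[\ell:k]_{\text{insep}}$. The organization differs slightly: the paper cites this formula as known and then runs a cycle of implications, handling $(1)\Leftrightarrow(3)$ by a separate valuation computation of $[L:L^{\prime}]$, whereas you first identify $L^{\prime}$ as the fixed field of inertia to get $\lvert\Iner(L/K)\rvert=[L:L^{\prime}]=e\,p^{a}$ in one stroke and then read off all three conditions simultaneously. Your packaging is a bit cleaner and makes explicit where the Hensel-lifting input is needed (the paper's argument tacitly assumes $[L^{\prime}:K]=f$, which is fine once $\ell/k$ is separable but is exactly the point you isolate), so nothing is lost and the argument is arguably more transparent.
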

\begin{proof}
$(1) \implies (2)$: Recall that the order of the relative inertia group is given by
\[
e[\ell:k]_{\text{insep}}.
\]
Since $\ell/k$ is assumed separable we have $[\ell:k]_{\text{insep}} = 1$ and $\gcd(e,p) = 1$, so
\[
\gcd(e[\ell:k]_{\text{insep}}, p) = \gcd(e,p) = 1.
\]

$(2) \implies (1):$ If $\gcd(e[\ell:k]_{\text{insep}},p) = 1$ then $\gcd(e,p) = 1$ and $\gcd([\ell:k]_{\text{insep}},p) = 1$. However, the characteristic of $k$ is $p$, so any nontrivial inseparable subextension of $\ell/k$ must have degree dividing $p$. Thus there can be no such inseparable subextension and so $\ell/k$ is separable.

$(1) \implies (3):$ In this case write $[L:K] = n$ and note that $[L:K] = [L:L^{\prime}][L^{\prime}:K]$, where $[L^{\prime}:K] = f$ is the unramified degree of the extension. But then it follows that $n = ef$ and $[L^{\prime}:L^{\prime}] = [L:K]/[L^{\prime}:K] = e$, which is what was desired.

$(3) \implies (1):$ Finally, observe that
\[
e = v_{L/K}(\pi)
\]
for some uniformizer $\pi$ of $L^{\prime}$ and $K$ (which may be chosen because $L^{\prime}/K$ is unramified). However,
\[
v_{L/K}(\pi) = v_{L/L^{\prime}}(\pi)v_{L^{\prime}/K}(\pi) = v_{L/L^{\prime}}(\pi) = e.
\]
Moreover, $v_{L/L^{\prime}}(\pi) = [L:L^{\prime}]$ because the extension is  totally ramified and $L^{\prime}$ is the maximal unramified subextension, so this gives that $[L:L^{\prime}] = e$ and hence proves that $\gcd(e,p) = 1$.
\end{proof}

Now define the field, where the colimit is taken in the category $\mathbf{Field}$ of fields,
\[
K^{\text{mod}} := \lim_{\substack{\longrightarrow \\ L/K\, {\text{tamely\, ramified}}}} L
\]
and note that $K \hookrightarrow K^{\text{un}} \hookrightarrow K^{\text{mod}} \hookrightarrow K^{\text{sep}}$. The residue fields of $K^{\text{un}}$ and $K^{\text{mod}}$ are $k^{\text{sep}}$, while that of  $K^{\text{sep}}$ is $\overline{k}$. 

In what proceeds we define the group schemes, for all $n \in \N^{\times}$,
\[
\mu_n := \Spec \frac{k[x]}{(x^n-1)}
\]
so that for any field $\ell/k$, 
\[
\mu_n(\ell) = \mu_n(\Spec \ell) = \lbrace a \in \ell \; | \; a^n = 1\rbrace.
\]
We will use these groups to show that the limit of the $\mu_n(\overline{k})$ along $n$ coprime to $p$ is isomorphic to the Galois group $\Gal(K^{\text{mod}}/K^{\text{un}})$, which will in turn allow us to show that the map $\Gal(K^{\text{mod}}/K) \to \Gal(\overline{k}/k)$ splits.
\begin{lemma}[Exercise 2.a of II.4.3 of \cite{SerreGaloisCoh}]\label{Lemma: Iso of Gal groups}
There is an isomorphism of profinite groups
\[
\Gal(K^{\text{mod}}/K^{\text{un}}) \cong \lim_{\substack{\longleftarrow \\ \gcd(n,p) = 1}} \mu_n(\overline{k}).
\]
\end{lemma}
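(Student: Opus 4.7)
The plan is to reduce this to a classical description of the tamely ramified extensions of a local field with separably closed residue field, and then to run the Kummer-theoretic identification one level at a time before passing to the limit.

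First, I would show that every finite tamely ramified extension $L/K^{\text{un}}$ has the form $K^{\text{un}}(\pi^{1/n})$ for some $n$ coprime to $p$, where $\pi$ is a uniformizer of $K$ (equivalently of $K^{\text{un}}$). Since the residue field of $K^{\text{un}}$ is $k^{\text{sep}}$, the unramified part of any $L/K^{\text{un}}$ is trivial, so by Proposition \ref{Prop: Equivalent ways of seeing tamely ramified extensions} $L/K^{\text{un}}$ is totally tamely ramified of some degree $n$ with $\gcd(n,p) = 1$. A standard Hensel's lemma / Eisenstein-polynomial argument then shows that any uniformizer of $L$ can be adjusted by a unit to have $n$-th power equal to $\pi$, so that $L = K^{\text{un}}(\pi^{1/n})$; since $\gcd(n,p) = 1$ and $\mu_n(k^{\text{sep}}) \subseteq K^{\text{un}}$ (because $x^n - 1$ is separable over $k^{\text{sep}}$ and lifts through Hensel), the extension $L/K^{\text{un}}$ is Galois.

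Second, for each such $n$, I would use Kummer theory: the map
\[
\chi_n \colon \Gal(K^{\text{un}}(\pi^{1/n})/K^{\text{un}}) \longrightarrow \mu_n(\overline{k}), \qquad \sigma \longmapsto \frac{\sigma(\pi^{1/n})}{\pi^{1/n}},
\]
is a well-defined injective homomorphism (independent of the chosen $n$-th root since any two differ by an element of $\mu_n(K^{\text{un}})$), and it is surjective because $|\Gal(K^{\text{un}}(\pi^{1/n})/K^{\text{un}})| = n = |\mu_n(\overline{k})|$ (where the last equality uses $\gcd(n,p) = 1$ so that $\mu_n$ is \'etale). Compatibility under divisibility $n \mid m$ is immediate: restricting $\sigma|_{K^{\text{un}}(\pi^{1/n})}$ corresponds under the $\chi$ maps to the transition $\mu_m(\overline{k}) \to \mu_n(\overline{k})$, $\zeta \mapsto \zeta^{m/n}$, which witnesses the inverse system.

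Third, writing $K^{\text{mod}}$ as the filtered union of the $K^{\text{un}}(\pi^{1/n})$ over $n$ coprime to $p$ (ordered by divisibility), I would pass to the inverse limit of the compatible isomorphisms $\chi_n$ to obtain the desired isomorphism of profinite groups
\[
\Gal(K^{\text{mod}}/K^{\text{un}}) \;\cong\; \lim_{\substack{\longleftarrow \\ \gcd(n,p) = 1}} \Gal(K^{\text{un}}(\pi^{1/n})/K^{\text{un}}) \;\cong\; \lim_{\substack{\longleftarrow \\ \gcd(n,p) = 1}} \mu_n(\overline{k}).
\]

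The main obstacle I expect is the first step, establishing that every totally tamely ramified extension of $K^{\text{un}}$ is a radical extension $K^{\text{un}}(\pi^{1/n})$: this uses that the residue field is separably closed (so all $n$-th roots of unity with $\gcd(n,p)=1$ already exist in $K^{\text{un}}$) together with a Hensel/Eisenstein argument to replace an arbitrary uniformizer of $L$ by an $n$-th root of a uniformizer of $K$. Once that structural fact is in hand, the Kummer identification and the limit are essentially formal.
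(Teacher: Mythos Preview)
Your proposal is correct and follows essentially the same line as the paper's proof: both identify the totally tamely ramified extensions as radical extensions $K'(\pi^{1/e})$ of degree coprime to $p$, invoke the Kummer map $\sigma \mapsto \sigma(\pi^{1/e})/\pi^{1/e}$ to obtain $\Gal \cong \mu_e(\overline{k})$, and then pass to the limit. The only organizational difference is that the paper indexes over tamely ramified extensions $L/K$ and extracts the inertia piece via the maximal unramified subextension $L'/K$, whereas you work directly over $K^{\text{un}}$ from the start; this is a matter of bookkeeping rather than a different argument.
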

\begin{proof}
First we observe that unramified extensions are tamely ramified, as $\Iner(L/K) = 1$ for any unramified {\Galf} extension $L/K$, and so in this case $\Iner(L/K) = \mu_1(\overline{k})$. Now, for any $L/K$ tamely ramified of ramification degree $e = v_L(\pi)$, where $\pi$ is a uniformizer of $K$ and $v$ is the valuation on $L$. Consider the short exact sequence of groups
\[
\xymatrix{
	1 \ar[r] & \Iner(L/K) \ar[r] & \Gal(L/K) \ar[r] & \Gal(\ell/k) \ar[r] & 1
}
\]
where $\ell$ is the residue field of $L$. Fix the degree $[\ell:k] = f = [\ell:k]_{\text{sep}}$, and recall that if $[L:K] = n$ then $n = ef$. Let $L/L^{\prime}/K$ be the maximal {\Galf} unramified subextension of $L/K$; consequently the residue fields of $L$ and $L^{\prime}$ are both $\ell$, while $[L^{\prime}:K] = [\ell:k] = f$. Then, from the diagram of groups (with exact rows),
\[
\xymatrix{
1 \ar@{=}[r] & \Iner(L^{\prime}/K)	\ar[r] & \Gal(L^{\prime}/K) \ar[r] & \Gal(\ell/k) \ar[r] & 1 \\
1 \ar[r] & \Iner(L/K) \ar[r] & \Gal(L/K) \ar[r] & \Gal(\ell/k) \ar[r] & 1
}
\]
we get that
\[
\Iner(L/L^{\prime}) \cong \Iner(L/K) \cong \frac{\Gal(L/K)}{\Gal(L^{\prime}/K)} \cong \Gal(L/L^{\prime}).
\]
From the fact that $[L:L^{\prime}] = e$ and from the Eisenstein equation of ramification, it follows that $L/L^{\prime}$ is obtained by adjoining an $e$-th root of $\pi$ to $L^{\prime}$ and hence
\[
\Gal(L/L^{\prime}) \cong \mu_{e}(\ell)
\]
by an argument about ideal factorizations in the local Dedekind domain extension $\CalO_{L}/\CalO_{L^{\prime}}$; however, by base changing to $\overline{k}$ we find also that
\[
\Gal(L/L^{\prime}) \cong \mu_{e}(\ell) \cong \mu_{e}(\overline{k})
\]
by the fact that $L/L^{\prime}$ is Galois. However, this implies then that
\begin{align*}
\Gal(K^{\text{mod}}/K^{\text{un}}) & \cong \Iner(K^{\text{mod}}/K) = \lim_{\substack{\longleftarrow \\ L/K\, {\text{tame}}}} \Iner(L/K) \cong \lim_{\substack{\longleftarrow \\ L/L^{\prime}\, {\text{{\Galf}\, ramified}} \\ {\text{of\, degree}\, (e,p) = 1}}}\Gal(L/L^{\prime}) \\
 & \cong \lim_{\substack{\longleftarrow \\ (n,p) = 1}} \mu_{n}(\overline{k}),
\end{align*}
which proves the lemma.
\end{proof}
This leads us to the next lemma, which is the splitting of $\Gal(K^{\text{mod}}/K) \to \Gal(\overline{k}/k)$; note that it is perhaps more appropriate to write $\Gal(k^{\text{sep}}/k)$ in place of $\Gal(\overline{k}/k)$ in order to match $K^{\text{mod}}$ with its residue field, but because Galois extensions only see separable extensions, we have the equality $\Gal(\overline{k}/k) = \Gal(k^{\text{sep}}/k)$, and so we use what is present in the literature.
\begin{lemma}\label{Lemma: Extension over Kmod spilts}
The extension of groups
\[
\xymatrix{
1 \ar[r] & \Gal(K^{\text{mod}}/K^{\text{un}}) \ar[r] & \Gal(K^{\text{mod}}/K) \ar[r] & \Gal(K^{\text{un}}/K) \ar[r] & 1	
}
\]
splits.
\end{lemma}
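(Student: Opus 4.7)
The plan is to construct an explicit section of the quotient map $\Gal(K^{\text{mod}}/K) \to \Gal(K^{\text{un}}/K)$ by exploiting the Kummer-theoretic description of tame extensions rather than going through cohomology. First I would invoke Proposition \ref{Prop: Equivalent ways of seeing tamely ramified extensions} together with the analysis in the proof of Lemma \ref{Lemma: Iso of Gal groups} to identify
\[
K^{\text{mod}} = K^{\text{un}}\bigl(\pi^{1/n} \,:\, (n,p)=1\bigr)
\]
for a fixed uniformizer $\pi$ of $K$. The crucial point is that for every $n$ prime to $p$, all $n$-th roots of unity already lie in $K^{\text{un}}$: the polynomial $x^n-1$ is separable over $k$ and splits over $k^{\text{sep}}$, so its roots lift uniquely (by Hensel) to $K^{\text{un}}$. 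Hence the tame cyclic extensions over $K^{\text{un}}$ are precisely the Kummer extensions obtained by adjoining roots of a uniformizer.

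Next, I would select a compatible family $\{\pi^{1/n}\}_{(n,p)=1}$ satisfying $(\pi^{1/n})^{n/m} = \pi^{1/m}$ whenever $m \mid n$. This can be built by fixing, for each prime $\ell \neq p$, a compatible $\ell$-power tower $\pi^{1/\ell^k}$ with $(\pi^{1/\ell^{k+1}})^{\ell} = \pi^{1/\ell^k}$ (which exists because at each stage $x^\ell - \pi^{1/\ell^k}$ has a root in $K^{\text{mod}}$), and then combining these multiplicatively across primes. With this data in hand I would define
\[
s\colon \Gal(K^{\text{un}}/K) \to \Gal(K^{\text{mod}}/K)
\]
by sending $\sigma$ to the unique automorphism of $K^{\text{mod}}$ that restricts to $\sigma$ on $K^{\text{un}}$ and fixes every $\pi^{1/n}$. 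This is well-defined because the minimal polynomial of $\pi^{1/n}$ over $K^{\text{un}}$ is $x^n - \pi$ (its degree must be $n$, as $K^{\text{un}}(\pi^{1/n})/K^{\text{un}}$ is totally ramified of that degree), $\sigma$ fixes $\pi \in K$, and the compatible choice of roots ensures the extensions glue coherently up the tower.

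To conclude, I would verify that $s$ is a group homomorphism and genuinely splits the sequence. Both $s(\sigma\tau)$ and $s(\sigma)s(\tau)$ restrict to $\sigma\tau$ on $K^{\text{un}}$ and fix each $\pi^{1/n}$, so they coincide on a generating set of $K^{\text{mod}}$ over $K$; the splitting property is then immediate from $s(\sigma)|_{K^{\text{un}}} = \sigma$. The main obstacle is really just the initial setup — confirming that the Kummer description of $K^{\text{mod}}/K^{\text{un}}$ holds in full generality (as opposed to one tame extension at a time) and that a coherent compatible root system can be built across all $n$ prime to $p$ — since once these are in place the group-theoretic verification is essentially formal.
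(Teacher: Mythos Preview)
Your proposal is correct and follows essentially the same route as the paper: both fix a compatible system of $n$-th roots $\pi_n$ of a uniformizer $\pi$ (for $n$ prime to $p$) and take the splitting to be given by the elements of $\Gal(K^{\text{mod}}/K)$ that fix every $\pi_n$. The only cosmetic difference is that the paper packages this as a complement subgroup $H \leq \Gal(K^{\text{mod}}/K)$ and then invokes the Five Lemma to identify $H \cong \Gal(K^{\text{un}}/K)$, whereas you construct the section $s$ directly and verify the homomorphism property on generators; the image of your $s$ is exactly the paper's $H$.
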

\begin{proof}
Begin by choosing a uniformizer $\pi$ of $K$. Now, for all $n \geq 1$ with $n \in \N$ and $\gcd(n,p) =1$, define $\pi_n$ to be a fixed $n$-th root of $p$. Then for any $m,n \geq 1$ with $\gcd(m,p) = \gcd(n,p) = 1$, we have that $\pi_{mn}$ is an $(mn)$-th root and so, since $\pi_{nm}^{m}$ is an $n$-th root of $\pi$, we can arrange the multiplications to be taken so that
\[
(\pi_{mn})^{m} = \pi_{n}.
\]
Now let $H \leq \Gal(K^{\text{mod}}/K)$ be the fix subgroup of the $\pi_n$, i.e., for all $n \in \N$ with $\gcd(n,p) = 1$ and for all $\sigma \in H$, $\sigma(\pi_n) = \pi_n$. We will now prove that $\Gal(K^{\text{mod}}/K^{\text{un}}) \rtimes H \cong \Gal(K^{\text{mod}}/K)$ by showing that $H$ and $\Gal(K^{\text{mod}}/K^{\text{un}})$ have trivial intersection.

First observe that $\id_{K^{\text{mod}}} \in H \cap \Gal(K^{\text{mod}}/K^{\text{un}})$, so let $\sigma \in \Gal(K^{\text{mod}}/K^{\text{un}})$ with $\sigma \ne \id_{K^{\text{mod}}}$. Then there exists an $x \in K^{\text{mod}}$ such that $\sigma(x) \ne x$. Because $K^{\text{mod}}$ takes the form
\[
K^{\text{mod}} = \lim_{\substack{\longrightarrow \\ L/K\, {\text{tame}}}} L
\]
and the colimit is filtered, we can find a tamely ramified extension $L/K$ for which $x$ is represented by an element of $L$ and for which the image of this element under every further embedding $M/L$ remains constant. We will denote this element by $x$ as well. The choice of the field $L$ may be made so that the morphism $\sigma$ is realized minimally by an element in the Galois group $\Gal(L/K)$ in a fashion analogous to the representative of $x$. 

To proceed, let $\pi \in K$ be a uniformizer and let $\omega \in L$ be a uniformizer. Since the field $L$ is tamely ramified, it is a {\Galf} extension of $K$ whose residue field $\ell/k$ is a separable extension with ramification $e = v_{L}(\pi)$ coprime to $p$; moreover, we fix $L/L^{\prime}/K$ the maximal ({\Galf}) unramified subextension of $L$. Using that $L$ is a discrete valuation field, we can write
\[
x = \sum_{n=m}^{\infty} a_n\omega^{n}
\]
for some $m \in \Z$. We also take the $a_n$ to arise as
\[
a_n = \tau(\alpha_n),
\]
where $\tau$ is the Teichm{\"u}ller section $\tau:\ell \to L^{\prime}$. Then we have that
\[
\sum_{n = m}^{\infty} a_n\omega^n = \sum_{n=m}^{\infty} \tau(\alpha_n)\omega^n.
\]
Because $\sigma \in \Gal(K^{\text{mod}}/K^{\text{un}})$, $\sigma$ acts as the identity on $K^{\text{un}}$ and restricts to the identity morphism on $\overline{k}$. However, this implies that as a morphism in $\Gal(L/K)$, $\sigma$ acts as the identity on $\ell$. Applying $\sigma$ to $x$ then implies that
\[
\sigma(x) = \sigma\left(\sum_{n = m}^{\infty} \tau(\alpha_n)\omega^n\right) =\sum_{n = m}^{\infty} \sigma(\tau(\alpha_n))\sigma(\omega)^n = \sum_{m=n}^{\infty} \tau(\tilde{\sigma}(\alpha))\sigma(\omega)^n = \sum_{n = m}^{\infty} \tau(\alpha_n)\sigma(\omega)^n
\]
and, because $\sigma(x) \ne x$, it follows that $\sigma(\omega) \ne \omega$. Note that $\tilde{\sigma}$ is the restriction of $\sigma$ from $\Ocal_{L}$ to $\ell$. As such, $\sigma \notin H$ and consequently $H \cap \Gal(K^{\text{mod}}/K^{\text{un}}) = \lbrace \id_{K^{\text{mod}}}\rbrace$. This allows us to conclude that $G \cong \Gal(K^{\text{mod}}/K^{\text{un}}) \rtimes H$.

We now must prove that $H \cong \Gal(K^{\text{un}}/K)$ in order to complete the proof. Observe that if $\sigma \in H$ is a morphism fixing all the $\pi_n$, then following an argument analogous to the one given above shows that in some properly {\Galf} tamely ramified extension $L/K$, $\sigma$ acts on elements $x$ as
\[
\sigma(x) = \sum_{n = m}^{\infty} \sigma(\tau(\alpha_n))\sigma(\pi_e)^n
\]
where we choose the uniformizer of $L$ to be $\pi_e$. Then, since $\sigma$ fixes all the $\pi_e$, the action above reduces to
\[
\sigma(x) = \sum_{n = m}^{\infty} \sigma(\tau(\alpha_n))\pi_e^n.
\]
Because of this, if $\sigma \ne \id_{K^{\text{mod}}}$, it follows that $\sigma$ is a morphism in $\Gal(K^{\text{un}}/K)$. It may be readily checked that this inclusion $H \to \Gal(K^{\text{un}}/K)$ is a group homomorphism. Now consider the diagram
\[
\xymatrix{
\Gal(K^{\text{mod}}/K^{\text{un}}) \ar[r] & \Gal(K^{\text{mod}}/K^{\text{un}}) \rtimes H \ar[d]^{\cong} \ar[r] & H \ar[d] \ar[r] & 1 \ar[r] & 1 \\
\Gal(K^{\text{mod}}/K^{\text{un}}) \ar[r] \ar@{=}[u] & \Gal(K^{\text{mod}}/K) \ar[r]  & \Gal(K^{\text{un}}/K) \ar[r] & 1 \ar@{=}[u] \ar[r] & 1 \ar@{=}[u]	
}
\]
and note that each row is exact. It follows from the Five Lemma in the category $\Grp$ that the inclusion $H \to \Gal(K^{\text{un}}/K)$ is an isomorphism and so we are done.
\end{proof}
\begin{corollary}
The extension of groups
\[
\xymatrix{
1 \ar[r] & \Gal(K^{\text{mod}}/K^{\text{un}}) \ar[r] & \Gal(K^{\text{mod}}/K) \ar[r] & \Gal(k^{\text{sep}}/k) \ar[r] & 1	
}
\]
splits.
\end{corollary}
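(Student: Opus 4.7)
The plan is to reduce this corollary directly to Lemma \ref{Lemma: Extension over Kmod spilts} using the standard identification between the Galois group of the maximal unramified extension and the absolute Galois group of the residue field. First I would recall that since $k$ is perfect, the residue field extension associated to any finite unramified extension $L/K$ is a finite separable (in fact Galois) extension $\ell/k$, and that the reduction-of-residues map induces an isomorphism $\Gal(L/K) \xrightarrow{\cong} \Gal(\ell/k)$. Passing to the limit over all finite unramified subextensions $L/K$, these compatible isomorphisms assemble into an isomorphism of profinite groups
\[
\rho : \Gal(K^{\text{un}}/K) \xrightarrow{\cong} \Gal(k^{\text{sep}}/k),
\]
where we again use that for Galois-theoretic purposes $\Gal(\overline{k}/k) = \Gal(k^{\text{sep}}/k)$.

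Next I would invoke Lemma \ref{Lemma: Extension over Kmod spilts}, which provides a section $s : \Gal(K^{\text{un}}/K) \to \Gal(K^{\text{mod}}/K)$ of the quotient map $\Gal(K^{\text{mod}}/K) \to \Gal(K^{\text{un}}/K)$. Composing with $\rho^{-1}$ yields a candidate section
\[
\tilde{s} := s \circ \rho^{-1} : \Gal(k^{\text{sep}}/k) \to \Gal(K^{\text{mod}}/K)
\]
of the reduction map $\Gal(K^{\text{mod}}/K) \to \Gal(k^{\text{sep}}/k)$. The verification that $\tilde{s}$ is a section of the sequence in the corollary is then a diagram chase: the naturality of the reduction maps gives a commutative square relating the quotient $\Gal(K^{\text{mod}}/K) \to \Gal(K^{\text{un}}/K)$ with the reduction $\Gal(K^{\text{mod}}/K) \to \Gal(k^{\text{sep}}/k)$ via $\rho$, so $s$ being a section on the top row forces $\tilde{s}$ to be a section on the bottom row.

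The main obstacle, if any, is purely bookkeeping: ensuring that $\rho$ really is an isomorphism when $k$ is an arbitrary perfect field (not just a finite field). I would handle this by noting that the Teichm{\"u}ller-type lifting used throughout the previous lemma depends on nothing more than perfection of $k$, so the standard result that unramified extensions of $K$ correspond bijectively and Galois-equivariantly to separable extensions of $k$ carries over, and $\rho$ is indeed an isomorphism. Given this, the corollary follows immediately from Lemma \ref{Lemma: Extension over Kmod spilts}.
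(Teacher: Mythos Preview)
Your proposal is correct and follows exactly the same approach as the paper: the paper's proof is the single line ``Use the prior lemma with the isomorphism $\Gal(K^{\text{un}}/K) \cong \Gal(\overline{k}/k) = \Gal(k^{\text{sep}}/k)$,'' and you have simply spelled out this deduction in more detail by naming the isomorphism $\rho$ and the composite section $\tilde{s} = s \circ \rho^{-1}$.
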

\begin{proof}
Use the prior lemma with the isomorphism $\Gal(K^{\text{un}}/K) \cong \Gal(\overline{k}/k) = \Gal(k^{\text{sep}}/k).$
\end{proof}

In order to handle the splitting of the morphism $\Gal(\overline{K}/K) \to \Gal(\overline{k}/k)$, we need to know that the group $\Gal(K^{\text{sep}}/K^{\text{mod}})$ is a pro-$p$-group. We do not prove this here, as it is a technical Galois-theoretic result, but instead refer the reader to Exercise 2(b) of Section 4.3 of \cite{SerreGaloisCoh}. It is a straightforward lemma that uses the fact that the resulting extensions classified by $\Gal(K^{\text{sep}}/K^{\text{mod}})$ arise from inertia groups not coprime to the characteristic of $k$. We will also afterwards recall some facts from the Galois cohomology of fields of positive characteristic.

\begin{lemma}\label{Lemma: Pro-p group gal group}
The group $\Gal(K^{\text{sep}}/K^{\text{mod}})$ is a pro-$p$-group.
\end{lemma}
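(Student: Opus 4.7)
The plan is to use the characterization that a profinite group is pro-$p$ if and only if every finite continuous quotient has $p$-power order. Applied to $\Gal(K^{\text{sep}}/K^{\text{mod}})$, it suffices to show that every finite Galois extension $L$ of $K^{\text{mod}}$ inside $K^{\text{sep}}$ satisfies $[L:K^{\text{mod}}] = p^m$ for some $m \geq 0$. The first step is to descend this to a finite level over $K$: using the primitive element theorem, write $L = K^{\text{mod}}(\alpha)$, and note that the (finitely many) coefficients of the minimal polynomial of $\alpha$ over $K^{\text{mod}}$ all lie in a single tamely ramified finite Galois extension $M/K$ because $K^{\text{mod}}$ is the filtered union of such extensions. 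Letting $N$ be the Galois closure of $M(\alpha)$ over $K$, standard Galois theory supplies an isomorphism and a surjection
\[
\Gal\bigl((N \cdot K^{\text{mod}})/K^{\text{mod}}\bigr) \cong \Gal\bigl(N/(N \cap K^{\text{mod}})\bigr) \twoheadrightarrow \Gal(L/K^{\text{mod}}),
\]
so I would reduce to showing that $\Gal(N/(N \cap K^{\text{mod}}))$ is a $p$-group for every finite Galois $N/K$.

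The second step, which is really the heart of the argument, is to identify $N \cap K^{\text{mod}}$ with the maximal tamely ramified subextension of $N/K$. Since $N \cap K^{\text{mod}}$ is a finite subextension of $N$ sitting inside the filtered union $K^{\text{mod}}$, it is itself tame over $K$; conversely any tame Galois subextension of $N/K$ lies in $K^{\text{mod}}$ by the very definition of the latter. By the classical structure theorem for ramification in a finite Galois extension of a complete discretely valued field with perfect residue field of characteristic $p$ (cf.\@ \cite{SerreLocalFields}), the maximal tame subextension is exactly the fixed field $N^P$, where $P \trianglelefteq \Gal(N/K)$ is the wild inertia subgroup, i.e., the unique Sylow $p$-subgroup of $\Iner(N/K)$. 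The perfectness of $k$ enters precisely here to ensure that the quotient $\Iner(N/K)/P$ injects into the group of $e$-th roots of unity in $\overline{k}$ with $(e,p)=1$, exactly as in the proof of Lemma \ref{Lemma: Iso of Gal groups}. Granting this, $\Gal(N/(N \cap K^{\text{mod}})) = \Gal(N/N^P) = P$ is a finite $p$-group, so $\Gal(L/K^{\text{mod}})$ is too.

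Passing to the inverse limit over all such $L$ realizes $\Gal(K^{\text{sep}}/K^{\text{mod}})$ as an inverse limit of finite $p$-groups, completing the proof. The main obstacle is the classical local-fields input identifying the maximal tame subextension of $N/K$ as $N^P$ with $P$ a normal $p$-Sylow of the inertia; I would cite this from \cite{SerreLocalFields} rather than reprove it, since its proof is the same Galois-theoretic computation that underlies Lemma \ref{Lemma: Iso of Gal groups} but carried out at a single finite level. Everything else (primitive element, filtered colimits, and the restriction isomorphism $\Gal(N \cdot K^{\text{mod}}/K^{\text{mod}}) \cong \Gal(N/(N \cap K^{\text{mod}}))$) is formal.
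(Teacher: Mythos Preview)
Your argument is correct. The paper itself does not prove this lemma at all: immediately before stating it, the author writes ``We do not prove this here, as it is a technical Galois-theoretic result, but instead refer the reader to Exercise 2(b) of Section 4.3 of \cite{SerreGaloisCoh}.'' So there is nothing to compare against beyond a bare citation; you have supplied the actual content.

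Your route --- reducing to finite quotients, descending via the primitive element theorem to a finite Galois $N/K$, identifying $N\cap K^{\text{mod}}$ with the fixed field of the wild inertia $P$, and invoking the standard fact from \cite{SerreLocalFields} that $P$ is the unique (normal) $p$-Sylow of $\Iner(N/K)$ --- is exactly the intended unpacking of Serre's exercise. One small remark: you should note that $N\cap K^{\text{mod}}$ is Galois over $K$ (since both $N$ and $K^{\text{mod}}$ are), which is what lets you compare it to the maximal tame subextension $N^{P}$; you use this implicitly but it is worth making explicit given that the paper's Definition of ``tamely ramified'' is only stated for {\Galf} extensions. Otherwise the argument is clean and complete.
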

\begin{definition}[Section I.3.1 of \cite{SerreGaloisCoh}]
If $p \in \Z$ is prime and $G$ is a profinite group, then the {\em $p$-cohomological dimension of $G$}, $\cd_{p}(G)$, is the minimal integer $n$ for which, given every discrete torsion $G$-module $A$ and every $q > n$, the $p$-primary component of $H^{q}(G,A)$ is zero.
\end{definition}
The importance of having $\cd_p(G) \leq 1$ will be seen below, and our proof of the splitting of $\Gal(K^{\text{sep}}/K) \to \Gal(\overline{k}/k)$ will use this crucially. In particular, having this dimension allows one to split all extensions of $G$ by pro-$p$-groups. We give a proposition exploring this, together with the following lifting property: Assume that
\[
\xymatrix{
	1 \ar[r] & P \ar[r] & E \ar[r]^{\pi} & W \ar[r] & 1
}
\]
is an extension of profinite groups. We say $G$ has the lifting property for this extension if for every morphism $f:G \to W$, there exists a morphism $f^{\prime}:G \to E$ making the diagram
\[
\xymatrix{
G \ar[dr]_{f} \ar[r]^{f^{\prime}} & E \ar[d]^{\pi} \\
 & W	
}
\]
commute. In particular, this says that if $E_f$ is the pullback
\[
\xymatrix{
E_f \pullbackcorner \ar[r] \ar[d] & E \ar[d]^{\pi} \\
G \ar[r]_{f} & W
}
\]
then the induced extension
\[
\xymatrix{
1 \ar[r] & P \ar[r] & E_f \ar[r] & G \ar[r] & 1	
}
\]
splits, i.e., there is a section $G \to E_f$ of topological groups.
\begin{proposition}[Section I.3.4 of \cite{SerreGaloisCoh}]\label{Prop: Equiv forms of cdp leq 1}
Let $G$ be a profinite group and $p$ an integer prime. Then the following are equivalent:
\begin{enumerate}
	\item $\cd_p(G) \leq 1$;
	\item Every extension of $G$ by a finite Abelian $p$-group  killed by $p$ splits;
	\item Every extension of $G$ by a pro-$p$-group splits.
\end{enumerate}
\end{proposition}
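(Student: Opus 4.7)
I would establish the cycle $(1) \Rightarrow (3) \Rightarrow (2) \Rightarrow (1)$. The implication $(3) \Rightarrow (2)$ is immediate, since a finite Abelian $p$-group killed by $p$ is in particular a pro-$p$-group.

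For $(1) \Rightarrow (3)$, let $1 \to P \to E \to G \to 1$ be an extension with $P$ a pro-$p$-group. I would filter $P$ by its descending Frattini series $P_0 = P$, $P_{n+1} := \overline{[P_n,P_n]\, P_n^{p}}$; each $P_n$ is a closed characteristic subgroup of $P$, hence normal in $E$, $\bigcap_n P_n = 1$, and each successive quotient $V_n := P_n/P_{n+1}$ is a discrete $p$-primary $G$-module (a topological $\Fp$-vector space with continuous action). Inductively construct compatible continuous sections $s_n : G \to E/P_n$ of the projection: given $s_n$, pulling $E/P_{n+1} \twoheadrightarrow E/P_n$ back along $s_n$ yields a central extension of $G$ by $V_n$ whose equivalence class lives in $H^2(G, V_n)$; after reducing to finite subquotients using that continuous cohomology of profinite groups commutes with filtered colimits of discrete torsion modules, the hypothesis $\cd_p(G) \leq 1$ forces the class to vanish, producing $s_{n+1}$. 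The system $(s_n)$ then assembles into the desired section $G \to E \cong \varprojlim_n E/P_n$.

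For $(2) \Rightarrow (1)$, I would show that the $p$-primary component of $H^q(G,A)$ vanishes for every $q \geq 2$ and every discrete torsion $G$-module $A$. A filtered-colimit argument reduces to finite $A$, and filtering a finite $p$-primary $A$ by its $p$-power subgroups further reduces, via the long exact cohomology sequence, to $A$ a finite $\Fp[G]$-module. The case $q = 2$ is exactly the classical identification of extension classes by $H^2(G,A)$ combined with hypothesis $(2)$. For $q > 2$, embed $A$ into an injective discrete $\Fp[G]$-module $J$; then $H^{q+1}(G,A) \cong H^q(G,J/A)$ for $q \geq 1$, and induction on $q$ closes the argument, again using that cohomology commutes with filtered colimits to control the (possibly infinite) module $J/A$.

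The main technical obstacle is the inductive step together with passage to the limit in $(1) \Rightarrow (3)$: one must ensure the sections $s_n$ are genuinely continuous and compatible so that their limit defines a continuous section of $E \twoheadrightarrow G$. The cleanest way to handle this is to apply Zorn's lemma to the poset of pairs $(N, s)$ where $N \trianglelefteq E$ is closed, $N \subseteq P$, and $s : G \to E/N$ is a continuous section, ordered by refinement; chains have upper bounds via inverse limits of compact profinite groups, and a minimal element must satisfy $N = 1$, for otherwise the Frattini splitting step described above produces a strictly smaller closed normal subgroup together with a continuous section, contradicting minimality.
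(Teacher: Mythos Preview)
The paper does not prove this proposition; it simply cites Section I.3.4 of Serre's \emph{Galois Cohomology} and moves on. So there is no in-paper argument to compare against. Your outline is essentially the standard proof from Serre, and it is correct in substance, with two caveats worth tightening.

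First, in $(1)\Rightarrow(3)$ the extension you obtain by pulling back along $s_n$ is not in general \emph{central}: the conjugation action of $G$ on $V_n$ through the section is typically nontrivial. This does not matter for the argument, since extensions of $G$ by an abelian kernel with a fixed $G$-action are still classified by $H^2(G,V_n)$; just drop the word ``central.'' Second, and more substantively, the Frattini quotients $V_n=P_n/P_{n+1}$ need not be finite, so they are profinite $\Fp$-vector spaces rather than discrete $G$-modules, and the vanishing $H^2(G,V_n)=0$ does not follow directly from $\cd_p(G)\le 1$. Your Zorn's-lemma reformulation is exactly the right repair, but at the minimal pair $(N,s)$ with $N\neq 1$ you should first intersect $N$ with an open normal subgroup of $E$ to produce a \emph{finite} $p$-quotient of $N$, and only then pass to its Frattini quotient; this yields a finite elementary abelian kernel whose $H^2$ genuinely vanishes, contradicting minimality. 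That is precisely Serre's argument.

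Your $(2)\Rightarrow(1)$ is fine. It streamlines slightly if you observe up front that $\cd_p(G)\le 1$ is equivalent to $H^2(G,A)=0$ for every \emph{simple} discrete $p$-primary $G$-module $A$ (such $A$ are automatically finite $\Fp$-vector spaces), which is exactly what (2) gives via the identification of $H^2$ with extension classes; the dimension shift to higher $H^q$ then goes through using coinduced (acyclic) modules in place of abstract injectives.
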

\begin{proposition}[Proposition 3 of Section II.2.2 of \cite{SerreGaloisCoh}]
If $K$ is a field of positive characteristic $q$, then $\cd_q(K) \leq 1$.
\end{proposition}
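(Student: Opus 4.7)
The plan is to deduce this from the \emph{Artin-Schreier} short exact sequence combined with the additive Hilbert~90. Write $G_K := \Gal(K^{\text{sep}}/K)$. In characteristic $q$, the polynomial $T^q - T - a$ is separable for every $a \in K^{\text{sep}}$, so the additive operator $\wp(x) := x^q - x$ is surjective on $K^{\text{sep}}$ with kernel the prime field $\Fbb_q$. This yields the short exact sequence of discrete $G_K$-modules
\[
0 \to \Fbb_q \to K^{\text{sep}} \xrightarrow{\wp} K^{\text{sep}} \to 0.
\]

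Next I would invoke the Normal Basis Theorem: for any finite Galois subextension $L/K$ of $K^{\text{sep}}/K$, the additive group of $L$ is a free $K[\Gal(L/K)]$-module of rank one, hence coinduced from the trivial subgroup, so that $H^n(\Gal(L/K), L) = 0$ for every $n \geq 1$. Passing to the filtered colimit along the poset of finite Galois subextensions, and using that continuous cochain cohomology of profinite groups commutes with filtered colimits of discrete coefficient modules, produces the additive Hilbert~90 vanishing $H^n(G_K, K^{\text{sep}}) = 0$ for every $n \geq 1$. Plugged into the long exact sequence of the Artin-Schreier sequence this immediately forces $H^n(G_K, \Fbb_q) = 0$ for every $n \geq 2$.

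To finish, I would reduce from $\Fbb_q$-coefficients to arbitrary discrete $q$-primary torsion coefficients. It is standard (Serre, I.3.3) that $\cd_q(G) \leq n$ whenever $H^{n+1}(G, A) = 0$ for every simple discrete $G$-module $A$ killed by $q$. Such an $A$ is a finite $\Fbb_q$-vector space on which some open subgroup $G_L = \Gal(K^{\text{sep}}/L)$ acts trivially (with $L/K$ finite separable); Shapiro's lemma then rewrites $H^2(G_K, A)$ as a finite sum of copies of $H^2(G_L, \Fbb_q)$. Since $L$ still has characteristic $q$ and $L^{\text{sep}} = K^{\text{sep}}$, the same Artin-Schreier argument applied over $L$ gives $H^2(G_L, \Fbb_q) = 0$, completing the proof.

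The main obstacle I anticipate is not the cohomological computation proper but rather the bookkeeping surrounding it: checking carefully that additive Hilbert~90 survives the filtered colimit to the absolute Galois group, and verifying that the devissage to simple modules killed by $q$ is compatible with the continuous-cohomology framework in which Proposition~\ref{Prop: Equiv forms of cdp leq 1} was stated.
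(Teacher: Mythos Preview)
The paper does not supply its own proof of this proposition; it merely quotes the result from Serre's \emph{Cohomologie Galoisienne}. Your argument is exactly the standard one found there: the Artin--Schreier sequence plus the additive vanishing $H^n(G_K,K^{\text{sep}})=0$ (via the Normal Basis Theorem) kills $H^n(G_K,\Fbb_q)$ for $n\geq 2$, and one then reduces general $q$-torsion coefficients to $\Fbb_q$-coefficients over finite separable extensions.

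One step is imprecise, however. The sentence ``Shapiro's lemma then rewrites $H^2(G_K,A)$ as a finite sum of copies of $H^2(G_L,\Fbb_q)$'' is not correct as written: a simple $\Fbb_q[G_K]$-module $A$ on which $G_L$ acts trivially is a sub and a quotient of $\mathrm{Ind}_{G_L}^{G_K}(\Fbb_q^{\dim A})$, but it need not be a direct summand when $q$ divides $[G_K:G_L]$, so Shapiro does not produce such an identification. The clean fix is the one Serre actually uses: pass to a pro-$q$ Sylow subgroup $G_q\subseteq G_K$, over which the only simple discrete $\Fbb_q$-module is $\Fbb_q$ itself, so that $\cd_q(G_K)=\cd(G_q)$ is governed by $H^2(G_q,\Fbb_q)=\varinjlim_{L}H^2(G_L,\Fbb_q)=0$; equivalently, cite the criterion that $\cd_q(G)\leq 1$ once $H^2(U,\Fbb_q)=0$ for every open subgroup $U$. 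Your final sentence already provides precisely the input this criterion needs, so the argument closes as soon as the reduction is phrased correctly.
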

\begin{corollary}
If $P$ is a pro-$p$-group and $k$ is a field of characteristic $p$, then any extension of $\Gal(\overline{k}/k)$ by $P$ splits.
\end{corollary}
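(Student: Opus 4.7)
The proposed proof is essentially a direct concatenation of the two immediately preceding propositions, so the plan is to make that chain of implications explicit. First I would unpack the notation: the symbol $\cd_p(K)$, as used in the Galois-cohomological literature and in the Proposition cited from Section II.2.2 of \cite{SerreGaloisCoh}, denotes the $p$-cohomological dimension of the absolute Galois group $\Gal(\overline{k}/k) = \Gal(k^{\text{sep}}/k)$ of $k$. So the statement ``$\cd_q(K) \leq 1$ whenever $K$ has characteristic $q$'' applied with $q = p$ and $K = k$ gives us the inequality
\[
\cd_p(\Gal(\overline{k}/k)) \leq 1.
\]

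Once this inequality is in hand, I would invoke Proposition \ref{Prop: Equiv forms of cdp leq 1} with $G = \Gal(\overline{k}/k)$. The implication $(1) \Rightarrow (3)$ in that proposition says precisely that, under the hypothesis $\cd_p(G) \leq 1$, every extension of $G$ by a pro-$p$-group splits. Applying this with the given pro-$p$-group $P$ yields that any extension
\[
\xymatrix{
1 \ar[r] & P \ar[r] & E \ar[r] & \Gal(\overline{k}/k) \ar[r] & 1
}
\]
admits a continuous section, which is exactly the desired conclusion.

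There is essentially no obstacle here: both nontrivial ingredients have been quoted from \cite{SerreGaloisCoh} and are used only formally. The only point requiring the slightest care is that ``extension'' is being used in the topological/profinite sense, so that the splitting produced by Proposition \ref{Prop: Equiv forms of cdp leq 1} is a section in the category of profinite (topological) groups rather than merely of abstract groups; I would mention this once to avoid any ambiguity, but no further argument is needed.
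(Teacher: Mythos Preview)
Your proposal is correct and matches the paper's approach: the paper gives no explicit proof for this corollary, leaving it as an immediate consequence of the two preceding propositions, and your argument makes exactly that implication chain explicit (indeed, the paper's proof of the very next corollary, Corollary~\ref{Cor:extension of Kun/K by Ksep/Kmod splits}, reproduces precisely your reasoning in the special case $P = \Gal(K^{\text{sep}}/K^{\text{mod}})$). Your remark clarifying that $\cd_p(K)$ means $\cd_p(\Gal(\overline{k}/k))$ and your note about splittings being in the profinite category are both appropriate.
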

\begin{corollary}\label{Cor:extension of Kun/K by Ksep/Kmod splits}
Any extension of $\Gal(K^{\text{un}}/K)$ by $\Gal(K^{\text{sep}}/K^{\text{mod}})$ splits.
\end{corollary}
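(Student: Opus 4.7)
My plan is to observe that this corollary is essentially a two-line deduction from the preceding results, so the content of the proof is really in identifying the correct combination of facts rather than any new construction.

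First I would recall the canonical isomorphism $\Gal(K^{\text{un}}/K) \cong \Gal(\overline{k}/k)$, which comes from the standard fact that the maximal unramified extension corresponds under restriction-to-residue-field to the maximal separable (equivalently, algebraic, since $k$ is perfect) extension of $k$. Since $k$ is a perfect field of characteristic $p$, we have $\Gal(\overline{k}/k) = \Gal(k^{\text{sep}}/k)$ as noted in the discussion preceding Lemma \ref{Lemma: Extension over Kmod spilts}.

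Next I would invoke Lemma \ref{Lemma: Pro-p group gal group}, which tells us that $P := \Gal(K^{\text{sep}}/K^{\text{mod}})$ is a pro-$p$-group. The previous corollary (which itself is the combination of Proposition 3 of Section II.2.2 of \cite{SerreGaloisCoh}, giving $\cd_p(\Gal(\overline{k}/k)) \leq 1$ because $k$ has characteristic $p$, with Proposition \ref{Prop: Equiv forms of cdp leq 1}, which says that $\cd_p(G) \leq 1$ is equivalent to every extension of $G$ by a pro-$p$-group splitting) then directly gives that any extension of $\Gal(\overline{k}/k)$ by $P$ splits. Transporting through the isomorphism $\Gal(K^{\text{un}}/K) \cong \Gal(\overline{k}/k)$ yields the statement for extensions of $\Gal(K^{\text{un}}/K)$ by $\Gal(K^{\text{sep}}/K^{\text{mod}})$.

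There is no genuine obstacle here; the whole point of having set up the lifting-property formulation of $p$-cohomological dimension in Proposition \ref{Prop: Equiv forms of cdp leq 1} and the fact that residue fields of characteristic $p$ have $p$-cohomological dimension at most $1$ was precisely to make this step immediate. The only thing one must be slightly careful about is that \emph{any} extension (in the topological/profinite sense) is handled — but this is exactly what the pullback reformulation before Proposition \ref{Prop: Equiv forms of cdp leq 1} is designed to ensure, so no additional argument is required.
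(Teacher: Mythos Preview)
Your proposal is correct and follows essentially the same approach as the paper's proof: recall that $\Gal(K^{\text{sep}}/K^{\text{mod}})$ is a pro-$p$-group, identify $\Gal(K^{\text{un}}/K) \cong \Gal(\overline{k}/k)$, and apply Proposition \ref{Prop: Equiv forms of cdp leq 1} using $\cd_p(\Gal(\overline{k}/k)) \leq 1$. The paper's version is terser but the content is identical.
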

\begin{proof}
Begin by recalling that $\Gal(K^{\text{sep}}/K^{\text{mod}})$ is a pro-$p$-group and that $\Gal(K^{\text{un}}/K) \cong \Gal(k^{\text{sep}}/k) \cong \Gal(\overline{k}/k)$. Since $\cd_p(\Gal(\overline{k}/k)) \leq 1$, the result follows by Proposition \ref{Prop: Equiv forms of cdp leq 1}.
\end{proof}

Let us see now how to use this. As in in the proof of Lemma \ref{Lemma: Extension over Kmod spilts}, we set $H \leq \Gal(K^{\text{mod}}/K)$ to be the subgroup isomorphic to $\Gal(K^{\text{un}}/K)$ which fixes the sequence $(\pi_n)_{(n,p) = 1}$. If we consider the short exact sequence
\[
\xymatrix{
	1 \ar[r] & \Gal(K^{\text{sep}}/K^{\text{mod}}) \ar[r] & \Gal(K^{\text{sep}}/K) \ar[r] & \Gal(K^{\text{mod}}/K) \ar[r] & 1
}
\]
then because $\Gal(K^{\text{mod}}/K) \cong \Gal(K^{\text{mod}}/K^{\text{un}}) \rtimes H$, we can produce a diagram
\[
\xymatrix{
	1 \ar[r] & \Gal(K^{\text{sep}}/K^{\text{mod}}) \ar[r] & \Gal(K^{\text{sep}}/K) \ar[r] & \Gal(K^{\text{mod}}/K) \ar[r] & 1 \\
 & & & H \ar[u]_{i} &
}
\]
where $i:H \to \Gal(K^{\text{mod}}/K)$ is the splitting guaranteed by Lemma \ref{Lemma: Extension over Kmod spilts}. Then we have that the diagram
\[
\xymatrix{
	1 \ar[r] & \Gal(K^{\text{sep}}/K^{\text{mod}}) \ar[r] & \Gal(K^{\text{sep}}/K) \ar[r] & \Gal(K^{\text{mod}}/K) \ar[r] & 1 \\
	1 \ar[r] & \Gal(K^{\text{sep}}/K^{\text{mod}}) \ar@{=}[u] \ar[r] & \Gal(K^{\text{sep}}/K)_{H} \ar[r] \ar[u] & H \ar[u]_{i} \ar[r] & 1
}
\]
commutes, where $\Gal(K^{\text{sep}}/K)_{H}$ is the pullback
\[
\xymatrix{
\Gal(K^{\text{sep}}/K)_{H} \pullbackcorner \ar[r]^{p_1} \ar[d]_{p_2}	& H \ar[d]^{i} \\
\Gal(K^{\text{sep}}/K) \ar[r] & \Gal(K^{\text{mod}}/K)
}
\]
described above. Note that the canonical map $p_2:\Gal(K^{\text{sep}}/K)_{H} \to \Gal(K^{\text{sep}}/K)$ is an embedding of groups by the fact that $i$ is a section and pulling back against monics preserves monics, and by the fact that there is a section $i^{\prime}:H \to \Gal(K^{\text{sep}}/K)_{H}$ by Corollary \ref{Cor:extension of Kun/K by Ksep/Kmod splits}. Thus the map $s_{\text{mod}}:H \to \Gal(K^{\text{sep}}/K)$ defined by the composite
\[
\xymatrix{
H \ar[r]^-{i^{\prime}} \ar[dr]_{s_{\text{mod}}} & \Gal(K^{\text{sep}}/K)_{H} \ar[d]^{p_2} \\
 & \Gal(K^{\text{sep}}/K)	
}
\]
is an embedding of groups and induces a diagram
\[
\xymatrix{
	1 \ar[r] & \Gal(K^{\text{sep}}/K^{\text{mod}}) \ar[r] & \Gal(K^{\text{sep}}/K) \ar[r] & \Gal(K^{\text{mod}}/K) \ar[r] & 1 \\
	1 \ar[r] & \Gal(K^{\text{sep}}/K^{\text{mod}}) \ar@{=}[u] \ar[r] & \Gal(K^{\text{sep}}/K)_{H} \ar[r] \ar[u] & H \ar[u]_{i} \ar[ul]^{s_{\text{mod}}} \ar[r] & 1
}
\]
which commutes in the category of profinite groups. We will use this to prove the proposition below:
\begin{proposition}\label{Prop: Splitting of GalKbar to Galkbar}
The restriction map $\Gal(K^{\text{sep}}/K) \to \Gal(K^{\text{un}}/K)$ splits.
\end{proposition}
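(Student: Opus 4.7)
The plan is to read off the section directly from the commutative diagram constructed immediately before the proposition. All of the hard work has already been done: Lemma \ref{Lemma: Extension over Kmod spilts} provided a splitting $i\colon H \to \Gal(K^{\text{mod}}/K)$ of the reduction map onto $\Gal(K^{\text{un}}/K)$, and Corollary \ref{Cor:extension of Kun/K by Ksep/Kmod splits} (via the $\cd_p \leq 1$ hypothesis and the fact that $\Gal(K^{\text{sep}}/K^{\text{mod}})$ is pro-$p$) provided a section $i'\colon H \to \Gal(K^{\text{sep}}/K)_H$ of the pulled-back extension. So my job is to show that $s_{\text{mod}} = p_2 \circ i'$ really splits the composite restriction map, up to identifying $H$ with $\Gal(K^{\text{un}}/K)$.

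First, I would note that the restriction map $\Gal(K^{\text{sep}}/K) \to \Gal(K^{\text{un}}/K)$ factors canonically as
\[
\Gal(K^{\text{sep}}/K) \twoheadrightarrow \Gal(K^{\text{mod}}/K) \twoheadrightarrow \Gal(K^{\text{un}}/K),
\]
so it suffices to produce a section of this composite. Let $\varphi\colon H \xrightarrow{\;\cong\;} \Gal(K^{\text{un}}/K)$ denote the isomorphism coming from the proof of Lemma \ref{Lemma: Extension over Kmod spilts}, realized as the composite $H \xrightarrow{i} \Gal(K^{\text{mod}}/K) \twoheadrightarrow \Gal(K^{\text{un}}/K)$; recall this is an isomorphism because $i$ was constructed as a splitting of $\Gal(K^{\text{mod}}/K) \cong \Gal(K^{\text{mod}}/K^{\text{un}}) \rtimes H$.

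Next, I would trace through the pullback square to show that $s_{\text{mod}}$ lifts $i$. Commutativity of the pullback square defining $\Gal(K^{\text{sep}}/K)_H$ says that the quotient $\Gal(K^{\text{sep}}/K) \to \Gal(K^{\text{mod}}/K)$ precomposed with $p_2$ equals $i \circ p_1$. Since $i'$ is a section in the sense that $p_1 \circ i' = \id_H$, one obtains
\[
\bigl(\Gal(K^{\text{sep}}/K) \to \Gal(K^{\text{mod}}/K)\bigr) \circ s_{\text{mod}} = i \circ p_1 \circ i' = i.
\]
Composing further with the quotient $\Gal(K^{\text{mod}}/K) \to \Gal(K^{\text{un}}/K)$ and using the definition of $\varphi$ then yields
\[
\bigl(\Gal(K^{\text{sep}}/K) \to \Gal(K^{\text{un}}/K)\bigr) \circ s_{\text{mod}} = \varphi.
\]
Therefore $s_{\text{mod}} \circ \varphi^{-1}\colon \Gal(K^{\text{un}}/K) \to \Gal(K^{\text{sep}}/K)$ is the desired section, and this section is a continuous homomorphism because each factor is.

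I do not anticipate a genuine obstacle here; the only delicate point is bookkeeping the pullback diagram and making explicit that the map $H \hookrightarrow \Gal(K^{\text{mod}}/K) \twoheadrightarrow \Gal(K^{\text{un}}/K)$ is the isomorphism identified in Lemma \ref{Lemma: Extension over Kmod spilts}, so that composing with $\varphi^{-1}$ produces a bona fide set-theoretic section of the reduction. All of the substantive work (tameness, cohomological dimension, pro-$p$-ness of $\Gal(K^{\text{sep}}/K^{\text{mod}})$) has been absorbed into the previous lemmas and corollaries, so the proposition is essentially a diagram chase.
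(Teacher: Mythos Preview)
Your proposal is correct and is essentially the paper's own argument: both define the section as $s_{\text{mod}}$ composed with the identification $H \cong \Gal(K^{\text{un}}/K)$, verify it splits the restriction by factoring $\Gal(K^{\text{sep}}/K)\to\Gal(K^{\text{un}}/K)$ through $\Gal(K^{\text{mod}}/K)$, and use the pullback square relation $r_{\text{mod}}\circ p_2 = i\circ p_1$ together with $p_1\circ i' = \id_H$. The only cosmetic difference is that the paper writes $\phi\colon \Gal(K^{\text{un}}/K)\to H$ and sets $s_{\text{un}} := s_{\text{mod}}\circ\phi$, whereas you call the inverse isomorphism $\varphi$ and write $s_{\text{mod}}\circ\varphi^{-1}$.
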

\begin{proof}
Consider the short exact sequence
\[
\xymatrix{
1 \ar[r] & \Gal(K^{\text{sep}}/K^{\text{un}}) \ar[r] & \Gal(K^{\text{sep}}/K) \ar[r] & \Gal(K^{\text{un}}/K) \ar[r] & 1	
}
\]
and let $\phi:\Gal(K^{\text{un}}/K) \to H$ be the isomorphism of Lemma \ref{Lemma: Extension over Kmod spilts}. Then since $\Gal(K^{\text{sep}}/K^{\text{mod}})$ is a subgroup of $\Gal(K^{\text{sep}}/K^{\text{un}})$, we can produce the diagram
\[
\xymatrix{
1 \ar[r] & \Gal(K^{\text{sep}}/K^{\text{un}}) \ar[r] & \Gal(K^{\text{sep}}/K) \ar[r]^{r_{\text{un}}} & \Gal(K^{\text{un}}/K) \ar[r] & 1 \\
1 \ar[r] & \Gal(K^{\text{sep}}/K^{\text{mod}}) \ar[r] \ar[u] & \Gal(K^{\text{sep}}/K)_{H} \ar[u]^{p_2} \ar@<.5ex>[r] & H \ar@<.5ex>[l]^-{i^{\prime}} \ar[r] \ar[u]^{\cong}_{\phi^{-1}} \ar[ul]^{s_{\text{mod}}} & 1
}
\]
where the rows are exact, the map $\Gal(K^{\text{sep}}/K^{\text{mod}}) \to \Gal(K^{\text{sep}}/K^{\text{un}})$ is the inclusion, and $s_{\text{mod}}$ the map described above. Define $s_{\text{un}}:\Gal(K^{\text{un}}/K) \to \Gal(K^{\text{sep}}/K)$ by 
\[
s_{\text{un}} := s_{\text{mod}} \circ \phi.
\]
Then since
\[
r_{\text{un}} \circ s_{\text{un}} = r_{\text{un}} \circ s_{\text{mod}} \circ \phi = r_{\text{un}} \circ p_2 \circ i^{\prime} \circ \phi,
\]
and since the restriction $r_{\text{un}}:\Gal(K^{\text{sep}}/K) \to \Gal(K^{\text{un}}/K)$ factors as
\[
\xymatrix{
\Gal(K^{\text{sep}}/K) \ar[rr]^{r_{\text{un}}} \ar[dr]_{r_{\text{mod}}} & & \Gal(K^{\text{un}}/K) \\
 & \Gal(K^{\text{mod}}/K) \ar[ur]_{r_{\text{un}}^{\text{mod}}}	
}
\]
we get further that
\[
r_{\text{un}} \circ p_2 \circ i^{\prime} \circ \phi = r_{\text{un}}^{\text{mod}} \circ r_{\text{mod}} \circ p_2 \circ i^{\prime} \circ \phi.
\]
It is then straightforward to check that
\[
r_{\text{un}}^{\text{mod}} \circ r_{\text{mod}} \circ p_2 \circ i^{\prime} = r_{\text{un}}^{\text{mod}} \circ i \circ p_1 \circ i^{\prime} = r_{\text{un}}^{\text{mod}} \circ i = \phi^{-1}
\]
so
\[
r_{\text{un}}^{\text{mod}} \circ r_{\text{mod}} \circ p_2 \circ i^{\prime} \circ \phi = \phi^{-1} \circ \phi = \id_{\Gal(K^{\text{un}}/K)}.
\]
This proves that $s_{\text{un}}$ is a section of $r_{\text{un}}$ and hence shows that the sequence
\[
\xymatrix{
1 \ar[r] & \Gal(K^{\text{sep}}/K^{\text{un}}) \ar[r] & \Gal(K^{\text{sep}}/K)  \ar[r] & \Gal(K^{\text{un}}/K) \ar[r] & 1	
}
\]
splits.
\end{proof}
\begin{corollary}\label{Cor: splitting of GK to Gk}
The reduction map $\Gal(K^{\text{sep}}/K) \to \Gal(\overline{k}/k)$ splits.
\end{corollary}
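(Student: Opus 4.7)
The plan is to deduce this corollary immediately from Proposition \ref{Prop: Splitting of GalKbar to Galkbar} by post-composing the splitting produced there with the canonical isomorphism $\Gal(K^{\text{un}}/K) \cong \Gal(\overline{k}/k)$. The reduction map from $\Gal(K^{\text{sep}}/K)$ to $\Gal(\overline{k}/k)$ is not a new object: by standard Galois-theoretic considerations (every element of $\Gal(K^{\text{sep}}/K)$ acts on the residue field through its image in the Galois group of the maximal unramified extension, together with the identification $\Gal(\overline{k}/k) = \Gal(k^{\text{sep}}/k)$), it factors as
\[
\Gal(K^{\text{sep}}/K) \xrightarrow{r_{\text{un}}} \Gal(K^{\text{un}}/K) \xrightarrow{\phi_{\text{res}}} \Gal(\overline{k}/k),
\]
where $\phi_{\text{res}}$ is the canonical isomorphism identifying unramified Galois extensions of $K$ with separable extensions of $k$.

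First I would invoke Proposition \ref{Prop: Splitting of GalKbar to Galkbar} to obtain a section $s_{\text{un}}:\Gal(K^{\text{un}}/K) \to \Gal(K^{\text{sep}}/K)$ of the restriction map $r_{\text{un}}$. Then I would define the desired section $s_{\text{red}}:\Gal(\overline{k}/k) \to \Gal(K^{\text{sep}}/K)$ by the composite
\[
s_{\text{red}} := s_{\text{un}} \circ \phi_{\text{res}}^{-1}.
\]
Finally I would verify directly that $s_{\text{red}}$ splits the reduction map by the computation
\[
(\phi_{\text{res}} \circ r_{\text{un}}) \circ (s_{\text{un}} \circ \phi_{\text{res}}^{-1}) = \phi_{\text{res}} \circ \id_{\Gal(K^{\text{un}}/K)} \circ \phi_{\text{res}}^{-1} = \id_{\Gal(\overline{k}/k)}.
\]

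There is no genuine obstacle here: the entire content of the corollary sits inside Proposition \ref{Prop: Splitting of GalKbar to Galkbar}, whose proof in turn rested on the considerably more delicate inputs Lemma \ref{Lemma: Extension over Kmod spilts} (splitting over $K^{\text{mod}}$ via the chosen compatible system $(\pi_n)$) and Corollary \ref{Cor:extension of Kun/K by Ksep/Kmod splits} (killing the pro-$p$ tower above $K^{\text{mod}}$ using $\cd_p(\Gal(\overline{k}/k)) \leq 1$). The corollary itself is a purely formal repackaging of that splitting under the standard identification $\Gal(K^{\text{un}}/K) \cong \Gal(\overline{k}/k)$.
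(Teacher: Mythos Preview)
Your proposal is correct and matches the paper's approach exactly: the paper states this corollary immediately after Proposition \ref{Prop: Splitting of GalKbar to Galkbar} with no proof, relying implicitly on the isomorphism $\Gal(K^{\text{un}}/K) \cong \Gal(\overline{k}/k)$ that it has already used several times. You have simply made explicit the one-line transport along $\phi_{\text{res}}$ that the paper leaves to the reader.
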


We now go back to proving the general class field theory of Hazewinkel. In our ultimate goal of proving Theorem \ref{Theorem: LCFT of Hazewinkel}, we need to know some properties of the proalgebraic group $\mathbf{U}_K$ over $k$ and the profinite group $\pi_1(\mathbf{U}_K)$ when $k$ is not algebraically closed.

Throughout what proceeds, let $L$ be the completion of $K^{\text{un}}$ with respect to the ultarametric topology on $K^{\text{un}}$, i.e., $L = \widehat{K^{\text{un}}}$. It then follows from the limit definition of $\mathbf{U}_K$ that
\[
\mathbf{U}_K(\overline{k}) = \lim_{\longleftarrow} \G_{m,R/\mathfrak{m}^n}^{(\infty)}(\overline{k}) = \lim_{\longleftarrow} \frac{R^{\text{un}}}{\mathfrak{m}_{\text{un}}^n} \cong U_{L}.
\]
Similarly, arguments analogous to those given above imply that base changing $\mathbf{U}_K$ to $\Spec \overline{k}$ gives the isomorphism
\[
\mathbf{U}_K \times_{\Spec k} \Spec \overline{k} \cong \mathbf{U}_L,
\]
and hence gives a $\Gal(\overline{k}/k)$ action on $\mathbf{U}_L$. Explicitly, given an automorphism $\sigma \in \Gal(\overline{k}/k)$, this action is induced from the diagram
\[
\xymatrix{
\mathbf{U}_L \ar[d]_{\tilde{\sigma}} & \ar[l]_-{\cong} \mathbf{U}_K \times_{\Spec k} \Spec \overline{k} \ar[d]^{\id_{\mathbf{U}_K} \times \Spec(\sigma)} \\
\mathbf{U}_L & \mathbf{U}_K \times_{\Spec k} \Spec \overline{k} \ar[l]^-{\cong}
}
\] 
The $\Gal(\overline{k}/k)$ action on $\mathbf{U}_L$ is this action is inherited from the $\Gal(\overline{k}/k)$ action on $\Spec \overline{k}$. This action is compatible with left-derived functors, so the groups $\pi_{i}^{\overline{k}}(\mathbf{U}_L)$ obtain actions of $\Gal(\overline{k}/k)$ as well. This gives a $\Gal(\overline{k}/k)$ action on one side of the isomorphism of Theorem \ref{Theorem: LCFT of Serre}, i.e., on the left-hand side of the isomorphism
\[
\pi_{1}^{\overline{k}}(\mathbf{U}_L) \cong \Iner(L^{\text{Ab}}/L).
\]

On the other side of the isomorphism proposed by Theorem \ref{Theorem: LCFT of Serre}, i.e., on the right-hand side, we note that since the reduction map $\Gal(K^{\text{sep}}/K) \to \Gal(\overline{k}/k)$ admits a section $s:\Gal(\overline{k}/k) \to \Gal(K^{\text{sep}}/K)$, there is a $\Gal(\overline{k}/k)$ action on $\Gal(K^{\text{sep}}/K)$ given by $\sigma\cdot \tau := s(\sigma)\tau$, where $\sigma \in \Gal(\overline{k}/k)$ and $\tau \in \Gal(K^{\text{sep}}/K)$.

In order to use the $\Gal(\overline{k}/k)$-action to prove Theorem \ref{Theorem: LCFT of Hazewinkel} from Theorem \ref{Theorem: LCFT of Serre}, we now need to show how the $\Gal(\overline{k}/k)$-action on $\Gal(K^{\text{sep}}/K)$ reduces to an action on $\Gal(L^{\text{Ab}}/L)$. To see this, note that the action of $\Gal(\overline{k}/k)$ is commutative on $\Gal(K^{\text{sep}}/K)$; consequently, upon taking the Abelianization of $\Gal(K^{\text{sep}}/K)$, we find the action of $\Gal(\overline{k}/k)$ pushes forward along the quotient to an action on $\Gal(K^{\text{sep}}/K)^{\text{Ab}} \cong \Gal(K^{\text{Ab}}/K)$; in particular, this new action is given by following the commuting diagram
\[
\xymatrix{
\Gal(\overline{k}/k) \ar[r] \ar[d]_{s} & \Gal(K^{\text{Ab}}/K)	\\
\Gal(K^{\text{sep}}/K) \ar[r]_{q} & \Gal(K^{\text{sep}}/K)^{\text{Ab}} \ar[u]_{\cong}
}
\]
in $\Grp$. Moreover, tracing through this diagram shows that the action of $\Gal(\overline{k}/k)$ on $\Gal(K^{\text{Ab}}/K)$ takes the following form: First, using the isomorphism $\Gal(\overline{k}/k) \cong \Gal(K^{\text{un}}/K)$, replace $\Gal(\overline{k}/k)$ with $\Gal(K^{\text{un}}/K)$; second, push the action of $\Gal(K^{\text{un}}/K)$ forward by the quotient map to $\Gal(K^{\text{Ab}}/K)$ and note that $\Gal(K^{\text{un}}/K)$ now acts by conjugation because we have quotiented by the derived subgroup. It is then straightforward, but technical, to check that the conjugation action of $\Gal(K^{\text{un}}/K)$ on $\Gal(K^{\text{Ab}}/K)$ restricts to an action on
\[
\Gal(K^{\text{Ab}}/K^{\text{un}});
\]
consequently it follows that there is a $\Gal(K^{\text{un}}/K)$-action on $\Gal(K^{\text{Ab}}/K^{\text{un}})$ which descends along isomorphisms to a $\Gal(\overline{k}/k)$-action on $\Gal(K^{\text{Ab}}/K^{\text{un}})$. Now, because $K^{\text{Ab}} \cong (K^{\text{un}})^{\text{Ab}}$ it follows that
\[
\Gal(K^{\text{Ab}}/K^{\text{un}}) \cong \Gal((K^{\text{un}})^{\text{Ab}}/K^{\text{un}}) \cong \Gal(L^{\text{Ab}}/L)
\]
and hence we have obtained our desired action of $\Gal(\overline{k}/k)$ on $\Gal(L^{\text{Ab}}/L)$. Moreover, the isomorphism
\[
\pi_{1}^{\overline{k}}(\mathbf{U}_L) \cong \Gal(L^{\text{Ab}}/L)
\]
is $\Gal(\overline{k}/k)$-equivariant with respect to these actions, and so we can prove Theorem \ref{Theorem: LCFT of Hazewinkel} by using the category of $\Gal(\overline{k}/k)$-modules. In particular, we provide the followingLemma to illustrate this fact.

\begin{lemma}\label{Lemma: Gal coinvariants are intertia}
There is an isomorphism
\[
\Gal(L^{\text{Ab}}/L)_{\Gal(K^{\text{un}}/K)} \cong \Iner(K^{\text{Ab}}/K).
\]
\end{lemma}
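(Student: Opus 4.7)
The plan is to reduce the statement to an abelianization-of-semidirect-product calculation, applied to the splitting of $\Gal(K^{\text{sep}}/K) \to \Gal(K^{\text{un}}/K)$ established in Proposition \ref{Prop: Splitting of GalKbar to Galkbar}. That splitting yields a decomposition of profinite groups
$$\Gal(K^{\text{sep}}/K) \cong \Gal(K^{\text{sep}}/K^{\text{un}}) \rtimes H,$$
where $H$ denotes the image of the section and $H \cong \Gal(K^{\text{un}}/K)$ via Lemma \ref{Lemma: Extension over Kmod spilts}. This is the structural input the entire argument will rest on.

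Next, I would invoke the standard identity that for any semidirect product $A \rtimes G$ of profinite groups there is a canonical isomorphism
$$(A \rtimes G)^{\text{ab}} \cong (A^{\text{ab}})_{G} \oplus G^{\text{ab}},$$
with the $G$-action on $A^{\text{ab}}$ induced by conjugation inside $A \rtimes G$ and $(A^{\text{ab}})_G$ denoting the (closed) coinvariants. This follows from the commutator identity $[a,g] = a \cdot (g \cdot a^{-1})$ in the semidirect product, which shows that the closure of the commutator subgroup is generated by $[A,A]$, $[G,G]$, and the submodule $(\sigma - 1)A^{\text{ab}}$ for $\sigma \in G$. Applied with $A = \Gal(K^{\text{sep}}/K^{\text{un}})$, whose abelianization is $\Gal((K^{\text{un}})^{\text{Ab}}/K^{\text{un}})$, and $G = H \cong \Gal(K^{\text{un}}/K)$, this produces
$$\Gal(K^{\text{Ab}}/K) = \Gal(K^{\text{sep}}/K)^{\text{ab}} \cong \Gal((K^{\text{un}})^{\text{Ab}}/K^{\text{un}})_{\Gal(K^{\text{un}}/K)} \oplus \Gal(K^{\text{un}}/K).$$

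To finish, I would read off the kernel of the second projection, which identifies
$$\Iner(K^{\text{Ab}}/K) = \Gal(K^{\text{Ab}}/K^{\text{un}}) \cong \Gal((K^{\text{un}})^{\text{Ab}}/K^{\text{un}})_{\Gal(K^{\text{un}}/K)},$$
and then transport along the identification $\Gal((K^{\text{un}})^{\text{Ab}}/K^{\text{un}}) \cong \Gal(L^{\text{Ab}}/L)$ recorded in the paragraph immediately preceding the lemma. The main obstacle will be verifying equivariance: the $\Gal(K^{\text{un}}/K)$-action on $\Gal(L^{\text{Ab}}/L)$ built earlier via the section $s$ must be shown to agree, through all the identifications, with the conjugation action of $H$ on $\Gal(K^{\text{sep}}/K^{\text{un}})^{\text{ab}}$ used in the semidirect-product formula. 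Any discrepancy between the two actions is by an inner automorphism of an abelian target and hence trivial, so this reduces to a diagram chase; every other step is a formal manipulation of split short exact sequences.
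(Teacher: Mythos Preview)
Your proposal is correct, and both your argument and the paper's rest on the splitting from Proposition~\ref{Prop: Splitting of GalKbar to Galkbar}, but they diverge after that. The paper first invokes the identification $K^{\text{Ab}} \cong (K^{\text{un}})^{\text{Ab}}$ (stated in the paragraph preceding the lemma) to replace $\Gal(L^{\text{Ab}}/L)$ by $\Gal(K^{\text{Ab}}/K^{\text{un}})$; since the $\Gal(K^{\text{un}}/K)$-action on the latter is conjugation inside the \emph{abelian} group $\Gal(K^{\text{Ab}}/K)$, it is trivial, so the coinvariants equal the group itself, which is $\Iner(K^{\text{Ab}}/K)$ by definition. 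Your route instead applies the identity $(A \rtimes G)^{\text{ab}} \cong (A^{\text{ab}})_G \times G^{\text{ab}}$ directly to $A = \Gal(K^{\text{sep}}/K^{\text{un}})$ and $G = H$, and reads off the inertia factor as the coinvariants of $A^{\text{ab}} = \Gal(L^{\text{Ab}}/L)$ without ever needing $K^{\text{Ab}} = (K^{\text{un}})^{\text{Ab}}$. This is more self-contained and arguably more robust, since that field identification is not obvious in the generality of an arbitrary perfect residue field. One small slip: in your displayed decomposition the second summand should be $\Gal(K^{\text{un}}/K)^{\text{ab}}$ rather than $\Gal(K^{\text{un}}/K)$, as $\Gal(\overline{k}/k)$ need not be abelian here; this does not affect the kernel you extract.
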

\begin{proof}
We make this calculation in the category of $\Z[\Gal(K^{\text{un}}/K)]$-modules and with $\Gal(K^{\text{Ab}}/K^{\text{un}})$, as $\Gal(K^{\text{Ab}}/K^{\text{un}}) \cong \Gal(L^{\text{Ab}}/L)$. Let
\[
I_{\Gal(K^{\text{un}}/K)} := \langle g - 1 \; | \; g \in \Gal(K^{\text{un}}/K) \rangle \trianglelefteq \Z[\Gal(K^{\text{un}}/K)]
\]
be the augmentation (two-sided) ideal of $\Z[\Gal(K^{\text{un}}/K)]$. Consider that
\[
\Gal(K^{\text{Ab}}/K^{\text{un}})_{\Gal(K^{\text{un}}/K)} = \frac{\Gal(K^{\text{Ab}}/K^{\text{un}})}{I_{\Gal(K^{\text{un}}/K)}\Gal(K^{\text{Ab}}/K^{\text{un}})}.
\]
Since the action of $\Z[\Gal(K^{\text{un}}/K)]$ is determined by the section $s:\Gal(K^{\text{un}}/K) \to \Gal(K^{\text{sep}}/K)$ and the quotient $\Gal(K^{\text{sep}}/K) \to \Gal(K^{\text{Ab}}/K)$, we first note that $\Gal(K^{\text{Ab}}/K^{\text{un}})$ arises as the kernel of the restriction $\Gal(K^{\text{Ab}}/K) \to \Gal(K^{\text{un}}/K)$ as in the commuting diagram
\[
\xymatrix{ 
	1 \ar[r] & \Gal(K^{\text{sep}}/K^{\text{un}})  \ar[r] \ar@{->>}[d] & \Gal(K^{\text{sep}}/K) \ar[r] \ar@{->>}[d] & \Gal(K^{\text{un}}/K) \ar@{=}[d] \ar[r] & 1\\
	1 \ar[r] & \Gal(K^{\text{Ab}}/K^{\text{un}}) \ar[r] & \Gal(K^{\text{Ab}}/K) \ar[r] & \Gal(K^{\text{un}}/K) \ar[r] & 1
}
\]
of $\Z[\Gal(K^{\text{un}}/K)]$-modules; note that the $\Gal(K^{\text{un}}/K)$-action on $\Gal(K^{\text{sep}}/K^{\text{un}})$ comes from the splitting of $\Gal(K^{\text{sep}}/K) \to  \Gal(K^{\text{un}}/K)$. From this it follows that
\begin{align*}
\Gal(K^{\text{Ab}}/K^{\text{un}}) &\cong \frac{\Gal(K^{\text{Ab}}/K^{\text{un}})}{I_{\Gal(K^{\text{un}}/K)}\Gal(K^{\text{Ab}}/K^{\text{un}})} \cong \Ker\big(\Gal(K^{\text{Ab}}/K) \to \Gal(K^{\text{un}}/K)\big) \\
&\cong \Iner(K^{\text{Ab}}/K)
\end{align*}
and we are done.
\end{proof}
\begin{remark}
Alternatively, we can see that $\Ker(\Gal(K^{\text{Ab}}/K) \to \Gal(K^{\text{un}}/K)) \cong \Iner(K^{\text{Ab}}/K) \cong \Gal(K^{\text{Ab}}/K^{\text{un}})$ by considering the commuting diagram
\[
\xymatrix{
1 \ar[r] \ar@{=}[d] & 1 \ar[r] \ar@{=}[d] & \Gal(K^{\text{Ab}}/K^{\text{un}}) \ar@{-->}[d]_{\exists!} \ar[r] & \Gal(K^{\text{Ab}}/K) \ar@{=}[d] \ar[r] & \Gal(K^{\text{un}}/K) \ar[d]^{\cong} \ar[r] & 1 \ar@{=}[d] \\
1 \ar[r] & 1 \ar[r] & \Iner(K^{\text{Ab}}/K) \ar[r] & \Gal(K^{\text{Ab}}/K) \ar[r] & \Gal(\overline{k}/k) \ar[r] & 1	
}
\]
where the top and bottom rows are exact at each spot and the unique map from the Galois group to the Inertia Group is given from the fact that the relative inertia group is the kernel of the reduction map. Appealing to the Five Lemma after truncating the last (right-most) term gives the desired isomorphism.
\end{remark}

Let us to the left-hand side of the isomorphism proposed by Theorem \ref{Theorem: LCFT of Hazewinkel}, i.e., to calculating the $\Gal(\overline{k}/k)$-coinvariants of $\pi_1^{\overline{k}}(\mathbf{U}_L)$. To see this we first consider that if $G$ is a profinite group, we write $\hat{G}$ to be the Pontryagin dual of $G$. Following the theory developed in the theory of proalgebraic groups, we find that for any proalgebraic group $G$ over $k$, there is are isomorphisms
\[
\widehat{\pi_i^{k}(G)} \cong \Ext^i_k(\pi_0(G),\Q/\Z) \cong \lim_{\longrightarrow} \Ext^i_k(\pi_0(G),\Z/n\Z) \cong \lim_{\longrightarrow} \Ext^i_k(G,\Z/n\Z),
\]
where we regard $\Z/n\Z$ as a constant quasialgebraic group. These, together with the spectral sequences
\[
\lim_{\longrightarrow}\Ext^{p}(k,\Ext^{q}_{\overline{k}}(\mathbf{U}_L,\Z/n\Z)) \Rightarrow \lim_{\longrightarrow}\Ext^{p+q}_{k}(\mathbf{U}_K,\Z/n\Z) \cong \Ext^{p+q}_{k}(\mathbf{U}_K,\Q/\Z),
\]
we are able to calculate the desired coninvariants.
\begin{lemma}
There is an isomoprhism
\[
\pi_{1}^{\overline{k}}(\mathbf{U}_L)_{\Gal(\overline{k}/k)} \cong \pi_1(\mathbf{U}_K).
\]
\end{lemma}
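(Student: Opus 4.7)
The plan is to dualize the statement via Pontryagin duality and then read off the desired identification from the spectral sequence displayed immediately before the lemma.

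First, I would invoke the identification $\widehat{\pi_{1}^{k}(G)} \cong \Ext^{1}_{k}(G,\Q/\Z)$ (coming from Theorem \ref{Theorem: Spectral Sequence for fundamental groups} applied to the connected proalgebraic group $G$, together with the passage to the colimit over the finite quotients $\Z/n\Z$ of $\Q/\Z$) applied to both $\mathbf{U}_{K}$ over $k$ and $\mathbf{U}_{L}$ over $\overline{k}$. Because Pontryagin duality is an anti-equivalence on profinite abelian groups that interchanges $H$-coinvariants with $H$-invariants, proving the lemma is equivalent to proving the dual statement
\[
\Ext^{1}_{k}(\mathbf{U}_{K},\Q/\Z) \cong \Ext^{1}_{\overline{k}}(\mathbf{U}_{L},\Q/\Z)^{\Gal(\overline{k}/k)}.
\]

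Next I would read off this isomorphism from the displayed spectral sequence
\[
E_{2}^{p,q} = H^{p}\bigl(\Gal(\overline{k}/k),\Ext^{q}_{\overline{k}}(\mathbf{U}_{L},\Q/\Z)\bigr) \Rightarrow \Ext^{p+q}_{k}(\mathbf{U}_{K},\Q/\Z)
\]
restricted to total degree one. The crucial ingredient is the vanishing $\Hom_{\overline{k}}(\mathbf{U}_{L},\Q/\Z) = 0$. To see this, I would note that $\mathbf{U}_{L}$ is connected (as the base change of the connected proalgebraic group $\mathbf{U}_{K}$ from Proposition \ref{Prop UK is UK}), while $\Q/\Z$ is a zero-dimensional (constant, perfect) proalgebraic group; Lemma \ref{Lemma: Factorizing G,N to pi0G, N} then forces every morphism $\mathbf{U}_{L} \to \Q/\Z$ to factor through $\pi_{0}^{\overline{k}}(\mathbf{U}_{L}) = 0$. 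Consequently $E_{2}^{p,0} = 0$ for all $p \geq 0$, so in particular $E_{2}^{1,0} = 0$ and the only possible outgoing differential $d_{2}\colon E_{2}^{0,1} \to E_{2}^{2,0}$ vanishes. The filtration at total degree one therefore collapses to
\[
\Ext^{1}_{k}(\mathbf{U}_{K},\Q/\Z) \cong E_{\infty}^{0,1} = E_{2}^{0,1} = \Ext^{1}_{\overline{k}}(\mathbf{U}_{L},\Q/\Z)^{\Gal(\overline{k}/k)},
\]
and dualizing once more delivers the lemma.

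The main technical obstacle is ensuring that the edge-map identification produced by the spectral sequence is compatible with the $\Gal(\overline{k}/k)$-equivariant structure on $\pi_{1}^{\overline{k}}(\mathbf{U}_{L})$ described in the paragraph preceding the lemma, i.e. that Pontryagin duality intertwines the geometric Galois action on $\mathbf{U}_{L} = \mathbf{U}_{K} \times_{\Spec k} \Spec \overline{k}$ with the action inherited through the section $s\colon \Gal(\overline{k}/k) \to \Gal(K^{\mathrm{sep}}/K)$ of Corollary \ref{Cor: splitting of GK to Gk}. This compatibility is essentially a naturality statement for the Grothendieck spectral sequence arising from the composition of Galois-fixed-points with $\Hom_{\overline{k}}(\mathbf{U}_{L},-)$, but some care is needed to trace through the limits defining $\mathbf{U}_{K}$ and $\mathbf{U}_{L}$ so that all structures are genuinely equivariant.
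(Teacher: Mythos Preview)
Your proposal is correct and follows essentially the same route as the paper: both arguments use the vanishing $\Hom_{\overline{k}}(\mathbf{U}_L,\Q/\Z)=0$ (from connectedness of $\mathbf{U}_L$) to collapse the displayed spectral sequence in total degree one, yielding $\Ext^{1}_{k}(\mathbf{U}_K,\Q/\Z)\cong \Ext^{1}_{\overline{k}}(\mathbf{U}_L,\Q/\Z)^{\Gal(\overline{k}/k)}$, and then identify both sides with the Pontryagin duals of the relevant $\pi_1$'s. Your write-up is in fact more explicit than the paper's, which leaves the Pontryagin-duality passage from invariants to coinvariants implicit in its final ``Thus it follows'' sentence.
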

\begin{proof}
Consider first that since the group $\mathbf{U}_L$ is connected and the constant group $\Q/\Z$ is not,
\[
\Ext^{0}_{\overline{k}}(\mathbf{U}_L,\Q/\Z) = \PAGrp_{/\overline{k}}(\mathbf{U}_L,\Q/\Z) = 0.
\]
This implies, from the spectral sequence, that
\begin{align*}
\lim_{\longrightarrow}\Ext^{0}_{k}(\mathbf{U}_K,\Ext^{1}_{\overline{k}}(\mathbf{U}_L,\Z/n\Z)) &\cong \lim_{\longrightarrow}\Ext_{k}^{1}(\mathbf{U}_K,\Z/n\Z) \cong \Ext_k^{1}(\mathbf{U}_K,\Q/\Z) \\ 
&\cong \Ext_k^{1}(\pi_0^{k}(\mathbf{U}_K),\Q/\Z)  \cong \widehat{\pi_1^{k}(\mathbf{U}_K)}.
\end{align*}
Thus it follows that
\[
\pi_{1}^{\overline{k}}(\mathbf{U}_L)_{\Gal(\overline{k}/k)} \cong \pi_1(\mathbf{U}_K)
\]
and so we are done.
\end{proof}

With this finally done, we have shown that the Galois coinvariants of both sides of the isomorphism are the same! This allows us to deduce the Local Class Field Theory of Hazewinkel, which we prove below.
\begin{proof}[Proof of Theorem \ref{Theorem: LCFT of Hazewinkel}]
From Theorem \ref{Theorem: LCFT of Serre} we have that
\[
\pi_{1}^{\overline{k}}(\mathbf{U}_L) \cong \Gal(L^{\text{Ab}}/L).
\]
The various lemmas above show that
\[
\pi_1(\mathbf{U}_K) \cong \pi_{1}^{\overline{k}}(\mathbf{U}_L)_{\Gal(\overline{k}/k)} \cong \Gal(L^{\text{Ab}}/L)_{\Gal(\overline{k}/k)} \cong \Iner(K^{\text{Ab}}/K),
\]
which proves the theorem.
\end{proof}

\section{The Local Langlands Correspondence for $\GL_{1,K}$}
We would like, in this final section of this chapter, to explain how the Local Langlands Correspondence for $\GL_{1,K}$, for a $p$-adic field $K$, reduces to LCFT for the local field $K$ (and in particular the LCFT of Hazewinkel presented above). Before we do this, however, we would like to present the Weil group of the local field $K$, as we will need it later. Intuitively, it is a dense subgroup of the absolute Galois group $\Gal(\overline{K}/K)$ of $K$ which is obtained by pulling back against the residue homomorphism $\Gal(\overline{K}/K) \to \Gal(\overline{k}/k)$ and the dense embedding of $\Z$ (regarded as the group $\langle \operatorname{Frob}\rangle$) into $\Gal(\overline{k}/k)$; note that the reduction homomorphism is given through the restriction map $\Gal(\overline{K}/K) \to \Gal(K^{\text{un}}/K)$ and then post-composing with  the isomorphism $\Gal(K^{\text{un}}/U) \cong \Gal(\overline{k}/k)$. In particular, it is reasonable to think of $W_K$ as the topological subgroup of $\Gal(\overline{K}/K)$ induced by pairs of Galois automorphisms $\sigma$ of $\overline{K}/K$ and integers $n$ such that $\sigma$ restricts to the $n$-th power of a lift $\Phi$ of the Frobenius automorphism on $k$ to $\Gal(K^{\text{un}}/K)$. In particular, the {Weil Group of the field $K$} determines a pullback
\[
\xymatrix{
W_K \pullbackcorner \ar[r] 	\ar[d] & \langle \operatorname{Frob} \rangle \ar[d]\\
\Gal(\overline{K}/K) \ar[r] & \Gal(\overline{k}/k)
}
\]
in $\Grp(\Top)$.

The construction above implies that $W_K$ obtains the subspace topology from $\Gal(\overline{K}/K)$, and the density of the map $\Z\to \Gal(\overline{k}/k)$ implies that $W_K$ is dense in $\Gal(\overline{K}/K)$. It also follows from this definition that since the inertia subgroup $\Iner(\overline{K}/K)$ is the kernel of the map $\Gal(\overline{K}/K) \to \Gal(\overline{k}/k)$, that $\Iner(\overline{K}/K) \hookrightarrow W_K$ is a topological subgroup. In particular, $W_K$ fits into the following diagram
\[
\xymatrix{
0 \ar[r] & \Iner(\overline{K}/K) \ar[r] & \Gal(\overline{K}/K) \ar[r] & \Gal(\overline{k}/k) \ar[r] & 0 \\
0 \ar[r] & \Iner(\overline{K}/K) \ar@{=}[u] \ar[r] & W_K \ar[u] \ar[r] & \langle \operatorname{Frob} \rangle \ar[r] \ar[u] & 0	
}
\]
where both rows are short exact sequences of topological groups. In fact, we can also prove the following useful lemma which says that the Abelianization of $W_K$, the group $W_K^{\text{Ab}} = W_K/[W_K,W_K]$, arises as a pullback of $\langle \Frob \rangle \to \Gal(\overline{k}/k)$ against the map $\Gal(K^{\text{Ab}}/K)$.
\begin{lemma}\label{Lemma: WKAb is pullback against Ab ext}
There is an isomorphism of $W_{K}^{\text{Ab}}$ with the pullback $P = \Gal(K^{\text{Ab}}/K) \times_{\Gal(\overline{k}/k)} \langle \Frob\rangle$ in the category of topological groups.
\end{lemma}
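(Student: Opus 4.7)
The plan is to produce a canonical comparison map $\varphi \colon W_K^{\text{Ab}} \to P$, where $P := \Gal(K^{\text{Ab}}/K) \times_{\Gal(\overline{k}/k)} \langle \Frob \rangle$, and then to verify that $\varphi$ is an isomorphism using the Five Lemma applied to the inertia-Frobenius extensions of both $W_K^{\text{Ab}}$ and $P$.

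First observe that $P$ is abelian: it is a closed subgroup of the product $\Gal(K^{\text{Ab}}/K) \times \langle \Frob \rangle$ of abelian topological groups. The quotient $\Gal(\overline{K}/K) \to \Gal(K^{\text{Ab}}/K)$ commutes with the reduction $\Gal(\overline{K}/K) \to \Gal(\overline{k}/k)$, because the reduction map is already abelian and hence factors through the abelianization. Composing with the inclusion $W_K \hookrightarrow \Gal(\overline{K}/K)$ and invoking the universal property of the pullback $P$ yields a continuous homomorphism $\Phi \colon W_K \to P$. Since $P$ is abelian, $\Phi$ factors uniquely through $\varphi \colon W_K^{\text{Ab}} \to P$.

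Next, the short exact sequence $1 \to \Iner(\overline{K}/K) \to W_K \to \langle \Frob \rangle \to 1$ recorded in the excerpt and the pullback description of $P$ give rise to a commuting pair of short exact sequences
\[
1 \to \overline{I} \to W_K^{\text{Ab}} \to \langle \Frob \rangle \to 1, \qquad 1 \to \Iner(K^{\text{Ab}}/K) \to P \to \langle \Frob \rangle \to 1,
\]
where $\overline{I}$ denotes the image of $\Iner(\overline{K}/K)$ in $W_K^{\text{Ab}}$; exactness of the top row uses that $[W_K, W_K] \subseteq \Iner(\overline{K}/K)$ since $\langle \Frob \rangle$ is abelian, while the bottom is exact by construction of the pullback. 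The map $\varphi$ is the identity on the $\langle \Frob \rangle$ factor and restricts on kernels to the arrow $\overline{I} \to \Iner(K^{\text{Ab}}/K)$ induced by the abelianization $\Gal(\overline{K}/K) \to \Gal(K^{\text{Ab}}/K)$.

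The main obstacle is showing that $\overline{I} \to \Iner(K^{\text{Ab}}/K)$ is an isomorphism, for then the Five Lemma yields that $\varphi$ itself is an isomorphism. Surjectivity is direct: the surjection $\Gal(\overline{K}/K) \to \Gal(K^{\text{Ab}}/K)$ restricts, by compatibility with the reduction to $\Gal(\overline{k}/k)$, to a surjection on inertia subgroups, which descends through the quotient to $\overline{I}$. Injectivity is the delicate step: one must show that any $i \in \Iner(\overline{K}/K)$ which dies in $\Gal(K^{\text{Ab}}/K)$ (equivalently, which lies in $\overline{[\Gal(\overline{K}/K), \Gal(\overline{K}/K)]}$) already lies in the closure of $[W_K, W_K]$ inside $W_K$. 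Here the density of $W_K$ in $\Gal(\overline{K}/K)$ noted in the excerpt is essential: it implies that $[W_K, W_K]$ is dense in $[\Gal(\overline{K}/K), \Gal(\overline{K}/K)]$, and a careful intersection of closures with the closed subgroup $\Iner(\overline{K}/K) \subseteq W_K$ yields the required equality of kernels. With both outer vertical arrows confirmed as isomorphisms, the Five Lemma completes the proof that $\varphi$ is an isomorphism of topological groups.
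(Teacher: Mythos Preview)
Your approach is correct and, in fact, considerably more thorough than the paper's own argument. The paper's proof is essentially a two-line assertion: it simply declares that $W_K^{\text{Ab}}$, being the maximal abelian quotient of $W_K$, ``can be viewed as'' the set of compatible pairs $(\sigma,n)$ with $\sigma \in \Gal(K^{\text{Ab}}/K)$ and $n \in \Z$, and then observes that this is exactly the description of the pullback $P$. No comparison map is written down and no verification is made that the two sides actually coincide; the crucial identification of $\overline{[W_K,W_K]}$ with $\overline{[\Gal(\overline{K}/K),\Gal(\overline{K}/K)]}$ is entirely suppressed. By contrast, you construct the comparison map explicitly via the universal property, set up the two inertia--Frobenius extensions, and reduce to the Five Lemma, which is precisely the style of argument the paper uses elsewhere (e.g.\ in the proof that $W_K^{\text{Ab}} \cong K^{\ast}$).

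One point worth tightening: your claim that density of $W_K$ in $G := \Gal(\overline{K}/K)$ forces $[W_K,W_K]$ to be dense in $\overline{[G,G]}$ is true, but it is not a formal consequence of density alone and deserves a sentence of justification. The clean argument passes through finite quotients: for every open normal $N \trianglelefteq G$ the map $W_K \to G/N$ is surjective, hence $[W_K,W_K]$ surjects onto $[G/N,G/N] = [G,G]N/N$, so $[W_K,W_K]N = [G,G]N$ for all such $N$, giving equality of closures in $G$. Since both commutator subgroups lie in $\Iner(\overline{K}/K)$, which is closed in both $W_K$ and $G$ with the same induced (profinite) topology, the closures taken in $W_K$ and in $G$ agree. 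With that in place, your Five Lemma argument goes through cleanly.
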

\begin{proof}
Since $W_{K}^{\text{Ab}}$ is the Abelianization of $W_K$, we can realize $W_K^{\text{Ab}}$ as the maximal commutative quotient of $W_K$. However, this implies that we can view $W_K^{\text{Ab}}$ as the subgroup of pairs $(\sigma, n)$ where $\sigma \in \Gal(K^{\text{Ab}}/K)$ and $n \in \Z$; the pairing is then given by identifying $\sigma$ with $\operatorname{Frob}^n$ in the image of the restriction map $\Gal(K^{\text{Ab}}/K) \to \Gal(\overline{k}/k)$. However, this description of $W_K^{\text{Ab}}$ is exactly the one implied by the pullback $P$ above, which implies that $P \cong W_{K}^{\text{Ab}}$, completing the proof of the lemma.
\end{proof}
Using the same reasoning as in the construction of $W_K$, we can see that $W_K^{\text{Ab}}$ is a dense topological subgroup of $\Gal(K^{\text{Ab}}/K)$ containing the inertia group $\Iner(K^{\text{Ab}}/K)$. In particular, the construction of $W_K^{\text{Ab}}$ as a pullback shows that the exactness relations of $W_K^{\text{Ab}}$ follow mutatis mutandis from those that $W_K$ satisfies.
\begin{corollary}
There is a short exact sequence of topological groups of the form:
\[
\xymatrix{
0 \ar[r] & \Iner(K^{\text{Ab}}/K) \ar[r] & \Gal(K^{\text{Ab}}/K) \ar[r] & \Gal(\overline{k}/k) \ar[r] & 0 \\
0 \ar[r] & \Iner(K^{\text{Ab}}/K) \ar[r] \ar@{=}[u] & W_K^{\text{Ab}} \ar[r] \ar[u] & \langle \operatorname{Frob} \rangle \ar[r] \ar[u] & 0
}
\]
\end{corollary}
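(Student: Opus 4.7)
The plan is to derive both rows and the compatibility between them directly from Lemma \ref{Lemma: WKAb is pullback against Ab ext}, which identifies $W_K^{\text{Ab}}$ with the pullback $\Gal(K^{\text{Ab}}/K) \times_{\Gal(\overline{k}/k)} \langle \Frob\rangle$ in the category of topological groups. The top row requires essentially no work: by definition the relative inertia group $\Iner(K^{\text{Ab}}/K)$ is the kernel of the reduction homomorphism $\Gal(K^{\text{Ab}}/K) \to \Gal(\overline{k}/k)$, and surjectivity of that reduction is standard, since every element of $\Gal(\overline{k}/k)$ lifts through the unramified tower via the canonical isomorphism $\Gal(K^{\text{un}}/K) \cong \Gal(\overline{k}/k)$.

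For the bottom row I would use the pullback description to identify $W_K^{\text{Ab}}$ concretely with the set of pairs $(\sigma,n) \in \Gal(K^{\text{Ab}}/K) \times \langle \Frob\rangle$ such that $\sigma$ reduces to $\Frob^n$ in $\Gal(\overline{k}/k)$. Surjectivity of the projection $p_2\colon W_K^{\text{Ab}} \to \langle \Frob\rangle$ is then immediate: for each $n \in \Z$, surjectivity of the top-row reduction provides some $\sigma_n \in \Gal(K^{\text{Ab}}/K)$ with image $\Frob^n$, so $(\sigma_n,n)$ lies in $W_K^{\text{Ab}}$ and projects to $n$. For the kernel I would observe that $(\sigma,n) \in \Ker p_2$ forces $n = 0$ and $\sigma|_{\overline{k}} = \id$, which is precisely the condition $\sigma \in \Iner(K^{\text{Ab}}/K)$; this yields $\Ker p_2 \cong \Iner(K^{\text{Ab}}/K)$ as a topological subgroup, since the pullback inherits its topology from the subspace topology on the product.

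To assemble the full diagram, I would take the middle vertical arrow to be the first projection $p_1\colon W_K^{\text{Ab}} \to \Gal(K^{\text{Ab}}/K)$ and the right-hand vertical arrow to be the dense embedding $\langle \Frob\rangle \hookrightarrow \Gal(\overline{k}/k)$ already used in constructing $W_K$. Commutativity of the right-hand square is then the defining compatibility of the pullback, while commutativity of the left-hand square amounts to showing that $p_1$ restricts to the identity on $\Iner(K^{\text{Ab}}/K)$; this holds because an element $\sigma \in \Iner(K^{\text{Ab}}/K)$ corresponds under the pullback bijection to the unique pair $(\sigma,0)$, and $p_1(\sigma,0) = \sigma$. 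The only genuine content has already been absorbed into Lemma \ref{Lemma: WKAb is pullback against Ab ext}; once the pullback description of $W_K^{\text{Ab}}$ is in hand, the corollary reduces to formal diagram chasing in $\Grp(\Top)$, and the main (mild) obstacle is merely confirming that the subspace topology on $\Ker p_2$ coincides with that on $\Iner(K^{\text{Ab}}/K)$ inherited from the top row.
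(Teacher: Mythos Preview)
Your proposal is correct and matches the paper's approach: the paper states the corollary without proof, noting only that the exactness relations for $W_K^{\text{Ab}}$ follow \emph{mutatis mutandis} from those for $W_K$ via the pullback description of Lemma~\ref{Lemma: WKAb is pullback against Ab ext}. You have simply spelled out that diagram chase in detail, which is exactly what the paper intends the reader to do.
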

\begin{proposition}\label{Prop: WKAb is Kast}
	There is an isomorphism of topological groups
	\[
	W_K^{\text{Ab}} \cong K^{\ast}.
	\]
\end{proposition}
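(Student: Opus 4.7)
The plan is to compare two split short exact sequences of topological groups, reducing the claim to Hazewinkel's Theorem \ref{Theorem: LCFT of Hazewinkel} together with an identification of $\pi_1(\mathbf{U}_K)$ with $U_K$ in the $p$-adic setting. Fix a uniformizer $\pi \in K$; the valuation $v \colon K^{\ast} \to \Z$ has kernel $U_K$, yielding a split short exact sequence
\[
0 \to U_K \to K^{\ast} \xrightarrow{v} \Z \to 0
\]
(split by $n \mapsto \pi^n$), so that $K^{\ast} \cong U_K \times \Z$ as topological groups, with $U_K$ carrying the profinite topology and $\Z$ the discrete one. The preceding Corollary provides the parallel sequence
\[
0 \to \Iner(K^{\text{Ab}}/K) \to W_K^{\text{Ab}} \to \langle \Frob \rangle \to 0;
\]
since $W_K^{\text{Ab}}$ is abelian and $\langle \Frob \rangle \cong \Z$ is free, this extension also splits (by sending the generator to any preimage of $\Frob$), giving $W_K^{\text{Ab}} \cong \Iner(K^{\text{Ab}}/K) \times \Z$.

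Next I would produce a canonical isomorphism $\alpha \colon U_K \xrightarrow{\cong} \Iner(K^{\text{Ab}}/K)$ of topological groups. Hazewinkel's Theorem \ref{Theorem: LCFT of Hazewinkel} already gives $\pi_1(\mathbf{U}_K) \cong \Iner(K^{\text{Ab}}/K)$, so what remains is to identify $\pi_1(\mathbf{U}_K)$ with $U_K$. Since $K$ is $p$-adic, the residue field $k = \Fbb_q$ is finite, so both groups are profinite. Using Proposition \ref{Prop UK is UK} (which gives $\mathbf{U}_K(k) = U_K$) together with a Lang--Steinberg style argument interpreting $\pi_1(\mathbf{U}_K)$ as the Frobenius-fixed points in the universal covering of $\mathbf{U}_K$ over $\overline{k}$, one extracts the desired topological isomorphism $\alpha$.

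Combining these ingredients, the composite
\[
K^{\ast} \cong U_K \times \Z \xrightarrow{\alpha \times \mathrm{id}} \Iner(K^{\text{Ab}}/K) \times \Z \cong W_K^{\text{Ab}}
\]
is a continuous group isomorphism with continuous inverse, proving the proposition. The main obstacle is the identification $\pi_1(\mathbf{U}_K) \cong U_K$: morally it asserts that the Serre--Hazewinkel fundamental group recovers the classical local Artin map on units, but making the statement rigorous requires an analog of the Lang--Steinberg theorem applied to the infinite-dimensional proalgebraic group $\mathbf{U}_K$ over the finite field $k$. Once this identification is in hand, the remaining comparison of split extensions is a routine five-lemma argument, and the topological conclusion is automatic since both sides decompose as the product of a profinite group with the discrete group $\Z$.
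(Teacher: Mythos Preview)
Your skeleton---two parallel short exact sequences and a Five-Lemma/product comparison---is exactly what the paper does. The paper writes down
\[
0 \to U_K \to K^{\ast} \to \Z \to 0
\quad\text{and}\quad
0 \to \Iner(K^{\text{Ab}}/K) \to W_K^{\text{Ab}} \to \langle\Frob\rangle \to 0,
\]
builds the map $K^{\ast}\to W_K^{\text{Ab}}$ by sending a uniformizer to a chosen lift of Frobenius, and applies the Short Five Lemma. So at the level of the comparison, you and the paper agree.

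The divergence is in how the outer isomorphism $U_K\cong\Iner(K^{\text{Ab}}/K)$ is obtained. The paper simply \emph{recalls} it: this is the classical local class field theory statement announced in the introduction, and the author treats it as a known input at this point rather than rederiving it from the Serre--Hazewinkel machinery. You, by contrast, try to manufacture it internally from Theorem~\ref{Theorem: LCFT of Hazewinkel}, which only yields $\pi_1(\mathbf{U}_K)\cong\Iner(K^{\text{Ab}}/K)$, and then you propose to close the gap with an identification $\pi_1(\mathbf{U}_K)\cong U_K$ via a Lang--Steinberg argument.

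That last identification is where your proposal is genuinely incomplete, and you are right to flag it. Nothing in the paper proves $\pi_1^{k}(\mathbf{U}_K)\cong U_K$; Proposition~\ref{Prop UK is UK} gives $\mathbf{U}_K(k)=U_K$, but $\pi_1$ is a derived functor of $\pi_0$, not a functor of rational points. A Lang-isogeny argument over $\overline{k}$ produces an exact sequence relating $\pi_1^{\overline{k}}(\mathbf{U}_L)$ to $U_K=\mathbf{U}_K(k)$, but turning that into an isomorphism $\pi_1^{k}(\mathbf{U}_K)\cong U_K$ requires exactly the coinvariant comparison of Section~4 together with the classical reciprocity on units---so you would be reproving the very statement the paper is content to quote. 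In short: your route is more faithful to the paper's advertised ``geometric'' philosophy, but the paper itself takes the shortcut of invoking the classical isomorphism, and your alternative path needs substantially more work than a one-line appeal to Lang--Steinberg.
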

\begin{proof}
Begin by recalling the isomorphism $U_K \cong \Iner(K^{\text{Ab}}/K)$. Now consider the short exact sequence of topological groups
\[
\xymatrix{
0 \ar[r] & U_K \ar[r] & K^{\ast} \ar[r] & \Z \ar[r] & 0
}
\]
and note that we can induce a map $K^{\ast} \to W_{K}^{\text{Ab}}$ by sending a fixed uniformizer $\pi$ of $K$ to a fixed lift of Frobenius $\Phi$ from $k$ to $K$. We then produce the following commuting diagram whose rows are short exact sequences:
\[
\xymatrix{
0 \ar[r] & U_K \ar[r] \ar[d]_{\cong} & K^{\ast} \ar[d] \ar[r] & \Z \ar[d]^{\cong} \ar[r] & 0 \\
0 \ar[r] & \Iner(K^{\text{Ab}}/K) \ar[r] & W_{K}^{\text{Ab}} \ar[r] & \langle \operatorname{Frob} \rangle \ar[r] & 0	
}
\]
Applying the Short Five Lemma then implies that $K^{\ast} \cong W_{K}^{\text{Ab}}$ and we are done. 
\end{proof}
\begin{corollary}
The isomorphism $K^{\ast} \to W_K^{\text{Ab}}$ is the local Artin reciprocity map.
\end{corollary}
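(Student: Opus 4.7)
The plan is to deduce the corollary essentially from the characterization of the local Artin reciprocity map together with the construction used in Proposition \ref{Prop: WKAb is Kast}. Recall that the local Artin reciprocity map $\operatorname{rec}_K : K^{\ast} \to W_K^{\text{Ab}}$ (equivalently, after composing with the dense inclusion, the map into $\Gal(K^{\text{Ab}}/K)$) is uniquely characterized by the requirements that (i) its restriction to $U_K$ equals the Hazewinkel LCFT isomorphism $U_K \xrightarrow{\cong} \Iner(K^{\text{Ab}}/K)$ of Theorem \ref{Theorem: LCFT of Hazewinkel}, and (ii) a chosen uniformizer $\pi$ is sent to a lift $\Phi$ of the geometric Frobenius in $\langle\operatorname{Frob}\rangle \subseteq \Gal(\overline{k}/k)$; uniqueness follows from the fact that $K^{\ast} = U_K \cdot \pi^{\Z}$ and the resulting map must respect the short exact sequence $0 \to U_K \to K^{\ast} \to \Z \to 0$.

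First I would observe that the isomorphism constructed in the proof of Proposition \ref{Prop: WKAb is Kast} is by construction defined precisely to make the diagram
\[
\xymatrix{
0 \ar[r] & U_K \ar[r] \ar[d]_{\cong} & K^{\ast} \ar[d] \ar[r] & \Z \ar[d]^{\cong} \ar[r] & 0 \\
0 \ar[r] & \Iner(K^{\text{Ab}}/K) \ar[r] & W_{K}^{\text{Ab}} \ar[r] & \langle \operatorname{Frob} \rangle \ar[r] & 0
}
\]
commute, where the left vertical arrow is exactly the Hazewinkel isomorphism of Theorem \ref{Theorem: LCFT of Hazewinkel} and the right vertical arrow identifies $1 \in \Z$ with $\Phi$, the chosen lift of Frobenius. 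Since the Short Five Lemma argument only established the existence of some isomorphism completing the diagram, I would next remark that this isomorphism is in fact uniquely determined by its behaviour on $U_K$ and on the chosen uniformizer $\pi$: any other homomorphism fitting into this diagram would have to agree with ours both on $U_K$ (since the restriction is prescribed) and on $\pi$ (since $\pi \mapsto \Phi$ by commutativity modulo $\Iner(K^{\text{Ab}}/K)$, and the ambiguity in lifting $\Phi$ back to $W_K^{\text{Ab}}$ is killed precisely by the requirement that the diagram commute).

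Putting these two observations together, the isomorphism $K^{\ast} \cong W_K^{\text{Ab}}$ constructed in Proposition \ref{Prop: WKAb is Kast} coincides with the local Artin reciprocity map, as both are the unique homomorphism making the above diagram commute with the prescribed behaviour on units (via Hazewinkel) and on uniformizers (via a chosen Frobenius lift). The main (and essentially only) subtlety will be carefully justifying the uniqueness clause, since different classical presentations of $\operatorname{rec}_K$ choose different normalizations (geometric versus arithmetic Frobenius, and positive versus negative valuation conventions); I would fix the geometric Frobenius normalization consistent with the identifications used throughout the paper so that the comparison is unambiguous, after which the corollary follows immediately.
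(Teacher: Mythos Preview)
Your proposal is correct and follows essentially the same approach as the paper: both argue that the isomorphism of Proposition \ref{Prop: WKAb is Kast} coincides with the Artin map because it sends a uniformizer to a lift of Frobenius, which is the characterizing property (the paper simply cites \cite{NeukirchANT} for this). Your version is considerably more detailed—you spell out the two-part characterization, address uniqueness explicitly, and flag the normalization ambiguity—whereas the paper's proof is a one-line appeal to the uniformizer/Frobenius correspondence.
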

\begin{proof}
This is immediate from the fact that each unique uniformizer $\pi \in K^{\ast}$ maps to a unique lift of Frobenius $\Phi$; cf.\@ \cite{NeukirchANT} for details on the local Artin reciprocity map.
\end{proof}

We now move to discuss the Local Langlands Correspondence for $\GL_{1,K}$, and in doing so show that it is equivalent to the Local Class Field Theory we have developed up to this point. However, we have to show how to connect two pieces of information: One from the representation theoretic side of things, and the other from the geometric side of things.

The object we study on the representation theoretic side of things is simple enough to describe: We want to describe/parametrize equivalence classes of all admissible irreducible representations of the group $\GL_{1,K}(K) = \G_{m,K}(K) = K^{\ast}$ by continuous group homomorphisms of the Weil group $W_K$ of $K$. In order to go about explaining explicitly how this works, we will describe both the (basic) theory of admissible representations of $\GL_{1,K}$, together with a basic lemma on irreducible (complex) representations of Abelian groups, as well as some of the theory of continuous group homomorphisms from $W_K \to \C^{\ast}$. For one familiar with category theory, our notation will look a little strange in this section; however, we use it to match with what is used in the literature regarding the Local Langlands Correspondence.

\begin{definition}
Let $G$ be an algebraic group over $K$. We say that a representation $\rho:G(K) \to \GL(V)$, for $V$ a complex vector space, is {\em smooth} if for all $v \in V$ the stabilizer (isotropy) subgroup
\[
\operatorname{Stab}_G(v) := \lbrace g \in G \; | \; \rho(g)v = v \rbrace
\]
is an open subgroup of $G(K)$. A smooth representation is moreover said to be {\em admissible} if for all compact open subsets $C$ of $G(K)$, the fix subspace
\[
V^{C} := \lbrace v \in V \; | \; \forall\,c \in C, \rho(c)v = v \rbrace
\]
is finite dimensional.
\end{definition}
\begin{lemma}\label{Lemma: Smooth Irrep for AB group is 1 dim}
Let $G$ be a group  and let $\rho:G \to \GL(V)$ be an irreducible representation, for $V$ a vector space over an algebraically closed field. Then if $G$ is Abelian, $\rho$ is one-dimensional.
\end{lemma}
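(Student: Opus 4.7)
The plan is to deduce this from Schur's lemma applied to the commuting family $\rho(G)$. First I would invoke the algebraic closedness of the base field to get the standard form of Schur's lemma: for any irreducible representation $\rho \colon G \to \GL(V)$ with $V$ over an algebraically closed field $k$, one has $\End_{G}(V) = k \cdot \id_V$. (In the setting of interest to this paper $V$ is finite dimensional or at least an admissible smooth representation, so Schur applies; if one wants the statement in full generality one invokes the Dixmier--Jacquet version of Schur for representations of countable dimension over an uncountable algebraically closed field, but the argument below is then word-for-word the same.)

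Next I would fix an arbitrary $g \in G$ and observe that, because $G$ is abelian, $\rho(g)$ commutes with $\rho(h)$ for every $h \in G$; hence $\rho(g)$ is a $G$-equivariant endomorphism of $V$. Applying Schur's lemma yields a scalar $\chi(g) \in k$ such that
\[
\rho(g) = \chi(g) \cdot \id_V.
\]
One checks (though this is not strictly needed for the current lemma) that the assignment $g \mapsto \chi(g)$ defines a character $\chi \colon G \to k^{\ast}$.

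Finally, I would use this to conclude the dimension statement. Since every operator $\rho(g)$ acts on $V$ by a scalar, every $k$-linear subspace of $V$ is $G$-stable; in particular, for any nonzero $v \in V$, the line $kv$ is a nontrivial $G$-subrepresentation of $V$. Irreducibility of $V$ forces $V = kv$, so $\dim_k V = 1$.

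The only genuine subtlety is Schur's lemma itself: in the naive form it requires finite dimension, but the representations relevant to the Langlands correspondence for $\GL_1$ are one-dimensional \emph{a posteriori} and the argument is applied to admissible smooth representations where the Dixmier--Jacquet extension of Schur is available. Beyond that caveat, the proof is a direct application of Schur, with no real obstacle.
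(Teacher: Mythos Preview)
Your proof is correct and follows essentially the same route as the paper: apply Schur's lemma to each $\rho(g)$ (which is $G$-equivariant by commutativity) to get scalars, then observe that every subspace is invariant so irreducibility forces $\dim V = 1$. Your added caveat about the finite-dimensionality hypothesis in Schur's lemma (and the Dixmier extension) is a point the paper's proof glosses over entirely, so if anything your write-up is more careful on this issue.
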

\begin{proof}
Recall that from Schur's Lemma, because $K$ is algberaically closed, if there exists a $K$-linear endomorphism $T:V \to V$ such that for all $g \in G$, $\rho(g) \circ T = T \circ \rho(g)$, then for all $g \in G$ there is a scalar $\lambda \in K^{\ast}$ such that $\rho(g) = \lambda\id_{V}$, i.e., $\rho(g)$ is a homothety for all $g \in G$. However, since $G$ is Abelian, we have that for all $g,h \in G$ that for all $v \in V$,
\[
(\rho(g) \circ \rho(h))(v) =\rho(g)(\rho(h)v) = \rho(gh)v = \rho(hg)v = \rho(h)(\rho(g)v) = (\rho(h) \circ \rho(g))(v).
\]
But then it follows that we can find, for all $g \in G$, scalars $\lambda \in K^{\ast}$ for which $\rho(g) = \lambda\id_{V}$ and so $\rho$ stabilizes every subspace of $V$. However, since $\rho$ was assumed to be irreducible, it follows that $\dim V = 1$, which proves the lemma.
\end{proof}
\begin{corollary}
If $G$ is a commutative algebraic group over $K$, any admissible irreducible complex representation $\rho:G(K) \to \GL(V)$ is isomorphic to a quasicharacter $\chi:G(K) \to \C^{\ast}$.
\end{corollary}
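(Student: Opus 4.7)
The plan is to reduce this corollary directly to Lemma \ref{Lemma: Smooth Irrep for AB group is 1 dim}, since the hypotheses of that lemma are automatic here. First I would note that because $G$ is commutative as an algebraic group over $K$, the group of $K$-points $G(K)$ is a commutative (abstract) group; this is the only structural use of the commutativity of $G$. The field $\C$ is algebraically closed, so the hypotheses of Lemma \ref{Lemma: Smooth Irrep for AB group is 1 dim} are satisfied by the representation $\rho:G(K) \to \GL(V)$.

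Applying that lemma then yields $\dim_{\C} V = 1$. Fixing any nonzero $v \in V$ gives an isomorphism $V \cong \C$ of $\C$-vector spaces, under which $\GL(V) \cong \GL_{1}(\C) = \C^{\ast}$. Through this isomorphism the representation $\rho$ becomes a group homomorphism $\chi:G(K) \to \C^{\ast}$, and any two choices of $v$ differ by a scalar, which in $\GL_1(\C)$ is central and hence gives an isomorphic representation. Thus $\rho \cong \chi$ as representations of $G(K)$.

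Finally, I would observe that admissibility plays no further role in the argument beyond ensuring that $\rho$ is a genuine complex representation to which Schur's Lemma (as invoked in Lemma \ref{Lemma: Smooth Irrep for AB group is 1 dim}) applies; the full strength of admissibility becomes relevant only in the parametrization statement of the Local Langlands Correspondence for $\GL_{1,K}$ that will follow. There is essentially no obstacle here: the corollary is just the specialization of the preceding lemma to the group $G(K)$ with target field $\C$, followed by the trivial identification $\GL_1(\C) = \C^{\ast}$.
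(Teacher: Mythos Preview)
Your proposal is correct and matches the paper's intended argument: the paper states this corollary without proof, treating it as an immediate consequence of Lemma~\ref{Lemma: Smooth Irrep for AB group is 1 dim} applied to the abelian group $G(K)$ with target field $\C$, followed by the identification $\GL(V)\cong\C^{\ast}$ once $\dim_{\C}V=1$. Your remark that admissibility is not really used here is in line with the paper, which likewise invokes Schur's Lemma at the level of the preceding lemma without appealing to admissibility.
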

The corollary above is very important to the Local Langlands Correspondence, as it allows us to work, up to isomorphism of representations, with a much smaller class of representations than we had before. In particular, because any irreducible admissible complex representation of $\GL_{1,K}(K)$ is isomorphic to a $\C^{\ast}$-valued homomorphism of $\GL_{1,K}(K)$, in order to understand the structure of complex admissible irreducible representations of $\GL_{1,K}(K)$, it suffices to understand the group of $\C^{\ast}$-valued homomorphisms of $\GL_{1,K}(K)$. We use this observation to define the class of irreducible admissible representations we will study.
\begin{definition}
We define the set $\mathcal{A}_1(K)$ to be given as
\[
\mathcal{A}_1(K) := \Grp(K^{\ast},\C^{\ast}).
\]
\end{definition}
\begin{remark}
It is not difficult to show that any group homomorphism $K^{\ast} \to \C^{\ast}$ is continuous for the usual topology of $K$ (the one induced by basis $\lbrace 1 +  \mathfrak{m}^{n}K^{\ast} \; | \; n \in \N\rbrace$) when $\C^{\ast}$ has the discrete topology. Thus there is the identification
\[
\mathcal{A}_1(K) \cong \Top(\Grp)(K^{\ast},(\C^{\ast},{\text{Discrete}})),
\]
and it is in this form that we will use $\mathcal{A}_1(K)$ to prove the Local Langlands Correspondence for $\GL_{1,K}$.
\end{remark}

If the purpose of the Local Langlands Programme is to connect the representation-theoretic side of things with the Galois-theoretic/Geometric side of things, we need to understand how to move between the two sides. Explicitly, we need to have an object $\mathcal{G}_1(K)$ with which to compare and contrast $\mathcal{A}_1(K)$.

The object $\mathcal{G}_1(K)$ can be defined by desiring to compare the representation theory of $\GL_{1,K}(K)$ induced by the topological group structure and the Galois-theoretic representation theory. However, we would like to work with a group that always behaves well with respect to the actions on ramification, and in $\Gal(\overline{K}/K)$ it is possible to construct representations which have infinite image induced by the ramification and having a non-finite restriction to Frobenius. But we can get around this --- if we instead restrict to the Weil group, no issue arises, and the density of $W_K \to \Gal(\overline{K}/K)$ allows us to push a continuous representation on $W_K$ forward uniquely to a continuous representation on $\Gal(\overline{K}/K)$. This brings us to define the set $\mathcal{G}_1(K)$.
\begin{definition}
We define the set $\mathcal{G}_1(K)$ to be
\[
\mathcal{G}_1(K) := \Top(\Grp)(W_K, \C^{\ast}).
\]
\end{definition}
\begin{remark}
Note that the set $\mathcal{G}_1(K)$ is the set of Langlands parameters for $\GL_{1,K}$, as the Langlands dual of $\GL_{1,K}$ is $\Gbb_{m,\C}$.
\end{remark}
\begin{remark}
Our notation for the set $\Acal_1(K)$ and $\Gcal_1(K)$ follow the notation and exposition of \cite{VogtLLC}.
\end{remark}
\begin{remark}
Because $\C^{\ast}$ is Abelian, any continuous representation $\rho:W_K \to \C^{\ast}$ factors as
\[
\xymatrix{
W_K \ar[rr]^{\rho} \ar[dr]_{q} & & \C^{\ast} \\	
 & W_K^{\text{Ab}} \ar@{-->}[ur]_{\exists!\widetilde{\rho}}
}
\]
and does so uniquely. Thus we obtain an isomorphism
\[
\mathcal{G}_{1}(K) \cong \Top(\Grp)(W_K^{\text{Ab}},\C^{\ast}).
\]
\end{remark}
The above isomorphism is the final ingredient we need to prove the Theorem which allows us to deduce the Local Langlands Correspondence for $\GL_{1,K}(K)$.

\begin{Theorem}[Local Langlands Correspondence for $\GL_{1,K}$]\label{Cor: LLC for padic GL1}
There is a canonical bijection
\[
\mathcal{A}_1(K) \cong \mathcal{G}_1(K)
\]
which is natural in the sense that if $L/K$ is any algebraic extension of $K$, there is a commuting diagram
\[
\xymatrix{
\Acal_1(K) \ar[r]^{\cong} \ar[d] & \Gcal_1(K) \ar[d] \\
\Acal_1(L) \ar[r]_{\cong} & \Gcal_1(L)
}
\]
\end{Theorem}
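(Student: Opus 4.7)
The plan is to obtain the bijection as an essentially formal consequence of Proposition \ref{Prop: WKAb is Kast} together with the remark immediately preceding the theorem. First, I would unpack the definitions: $\Acal_1(K) = \Grp(K^\ast,\C^\ast)$ (equivalently $\Top(\Grp)(K^\ast,(\C^\ast,\text{Discrete}))$ by the remark after Definition of $\Acal_1(K)$), and $\Gcal_1(K) = \Top(\Grp)(W_K,\C^\ast)$, which by the final remark before the theorem coincides with $\Top(\Grp)(W_K^{\text{Ab}},\C^\ast)$ since $\C^\ast$ is Abelian and hence every such $\rho$ factors uniquely through $W_K^{\text{Ab}}$. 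Now Proposition \ref{Prop: WKAb is Kast} supplies a canonical isomorphism of topological groups $\alpha_K:K^\ast \xrightarrow{\cong} W_K^{\text{Ab}}$ (the local Artin reciprocity map, per the corollary following that proposition). Applying the contravariant functor $\Top(\Grp)(-,\C^\ast)$ to $\alpha_K$ yields the desired bijection
\[
\Gcal_1(K) \cong \Top(\Grp)(W_K^{\text{Ab}},\C^\ast) \xrightarrow{\alpha_K^\ast} \Top(\Grp)(K^\ast,\C^\ast) \cong \Acal_1(K).
\]

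Next I would address naturality. Given an algebraic extension $L/K$, the vertical map $\Acal_1(K)\to \Acal_1(L)$ is given by precomposition with the norm $N_{L/K}:L^\ast \to K^\ast$, i.e.\@ $\chi \mapsto \chi \circ N_{L/K}$, while the vertical map $\Gcal_1(K)\to \Gcal_1(L)$ is given by precomposition with the inclusion $W_L \hookrightarrow W_K$ (arising from $\Gal(\overline{K}/L)\hookrightarrow \Gal(\overline{K}/K)$ and the pullback definition of the Weil group). Passing to abelianizations yields a commutative square
\[
\xymatrix{
L^\ast \ar[r]^-{\alpha_L} \ar[d]_{N_{L/K}} & W_L^{\text{Ab}} \ar[d] \\
K^\ast \ar[r]_-{\alpha_K} & W_K^{\text{Ab}}
}
\]
whose commutativity is the content of the naturality of the local Artin reciprocity map. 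Applying $\Top(\Grp)(-,\C^\ast)$ to this square then produces the naturality diagram of the theorem.

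The main technical obstacle is verifying commutativity of the square relating $\alpha_K$ and $\alpha_L$ via the norm and restriction. One way I would handle this is to trace through the explicit construction of $\alpha_K$ built in Proposition \ref{Prop: WKAb is Kast}: the uniformizer $\pi \in K^\ast$ is sent to a lift of Frobenius $\Phi_K$ of $k$, and on $U_K$ the map is the isomorphism $U_K\cong \Iner(K^{\text{Ab}}/K)$ obtained from Theorem \ref{Theorem: LCFT of Hazewinkel}. Commutativity on the unit part reduces to showing that $\Iner(L^{\text{Ab}}/L)\to \Iner(K^{\text{Ab}}/K)$ induced by the Weil-group inclusion coincides, under the LCFT isomorphisms, with the restriction of $N_{L/K}$ to $U_L\to U_K$; this follows from functoriality of the Hazewinkel construction of $\mathbf{U}_{(-)}$ and the compatibility of $\mathbf{N}_{L/K}$ with the LCFT pairing (cf.\@ Lemma \ref{Lemma: UL is proalg group and norm map is morphisme}). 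Commutativity on the uniformizer part reduces to the observation that a uniformizer $\pi_L \in L^\ast$ has $N_{L/K}(\pi_L) = \pi_K^{f}\cdot u$ for some $u\in U_K$, where $f$ is the residue degree, and a corresponding lift $\Phi_L$ of $\Frob_\ell$ restricts in $W_K$ to $\Phi_K^f$ times an inertia element, matching exactly.

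Once these compatibilities are verified, both the bijection and its naturality are immediate, and we deduce the Local Langlands Correspondence for $\GL_{1,K}$ as a direct categorical consequence of the Serre-Hazewinkel LCFT developed in the previous sections.
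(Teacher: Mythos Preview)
Your overall strategy matches the paper's: reduce everything to the isomorphism $K^\ast\cong W_K^{\text{Ab}}$ of Proposition~\ref{Prop: WKAb is Kast} and then verify norm-compatibility of the Artin map for naturality. There is, however, one genuine point you elide which the paper treats explicitly. In the chain
\[
\Gcal_1(K)\;\cong\;\Top(\Grp)(W_K^{\text{Ab}},\C^\ast)\;\xrightarrow{\alpha_K^\ast}\;\Top(\Grp)(K^\ast,\C^\ast)\;\cong\;\Acal_1(K),
\]
the left-hand hom-set carries the \emph{usual Archimedean} topology on $\C^\ast$ (this is how $\Gcal_1(K)$ is defined in the paper), whereas the identification $\Top(\Grp)(K^\ast,\C^\ast)\cong\Grp(K^\ast,\C^\ast)=\Acal_1(K)$ in the remark you invoke uses the \emph{discrete} topology on $\C^\ast$. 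Precomposition by the homeomorphism $\alpha_K$ only gives a bijection between continuous hom-sets for the \emph{same} target topology, so your functorial argument does not yet yield the stated bijection. The paper closes this gap by showing that a homomorphism $W_K^{\text{Ab}}\to\C^\ast$ is continuous for the Archimedean topology if and only if it is continuous for the discrete one: restrict to the compact, totally disconnected subgroup $\Iner(K^{\text{Ab}}/K)$, observe that its image must then be finite (hence the map is locally constant there), and extend along the $\Z$-quotient. You should insert this argument before declaring the bijection.

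For naturality, your reduction to the commutativity of the square with $N_{L/K}$ and the transfer $W_L^{\text{Ab}}\to W_K^{\text{Ab}}$ is exactly what the paper does, though the paper verifies it by a direct element-by-element computation (writing $N_{L/K}(x)$ as a product of Galois conjugates and checking it agrees with $\operatorname{Ar}_K^{-1}\circ\alpha$) rather than by your decomposition into units and uniformizers. Your decomposition is arguably cleaner and more conceptual; just be aware that invoking ``functoriality of the Hazewinkel construction'' for the unit part is a statement that itself requires proof and is not established earlier in the paper, so you would need to supply that argument or fall back on the paper's direct computation.
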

\begin{proof}
Let us begin by proving the isomorphism $\Grp(K^{\ast},\C^{\ast}) = \Acal_1(K) \cong \Gcal_1(K) = \Top(\Grp)(W_K^{\text{Ab}},\C^{\ast})$. Since $K^{\ast} \cong W_K^{\text{Ab}}$ by Proposition \ref{Prop: WKAb is Kast}, it suffices to prove that a representation $\pi:K^{\ast} \to \C^{\ast}$ is continuous with respect to the discrete topology on $\C^{\ast}$ if and only if its corresponding $\rho:W_{K}^{\text{Ab}} \to \C^{\ast}$ is continuous in the usual sense. In particular, it suffices to prove that the Artin reciprocity map
\[
\operatorname{Ar}_K:K^{\ast} \to W_K^{\text{Ab}}
\]
gives rise to an isomorphism (via pre-composition) $\Acal_1(K) \cong \Gcal_1(K)$. Note that this in turn amounts to showing that the induced map $\rho$ is locally constant on $W_{K}^{\text{Ab}}$.

To see this, consider that any representation $\pi:K^{\ast} \to \C^{\ast}$ is continuous in the discrete topology on $\C^{\ast}$. Since $W_{K}^{\text{Ab}} \cong K^{\ast}$ is an isomorphism of topological groups by Proposition \ref{Prop: WKAb is Kast}, the corresponding map $\rho$ is discrete continuous, as homeomorphisms are both open and closed bijections. Now, it is straightforward to check that a map $\rho:W_K^{\text{Ab}} \to \C^{\ast}$ is continuous if and only if it is continuous upon restriction to the inertia subgroup $\Iner(K^{\text{Ab}}/K)$. Proceeding from the restricted map $\tilde{\rho}:\Iner(K^{\text{Ab}}/K) \to \C^{\ast}$, we observe that $\tilde{\rho}$ is continuous and $\Iner(K^{\text{Ab}}/K)$ is a compact, totally disconnected space. Thus, by the continuity of $\tilde{\rho}$, it follows that the image of $\tilde{\rho}$ is a compact and totally disconnected subspace of $(\C^{\ast},{\text{Discrete}})$; from the fact that $\C^{\ast}$ in this case has the discrete topology, it follows that the image of $\tilde{\rho}$ is finite. In particular, it follows that $\tilde{\rho}$ is locally constant on $\Iner(K^{\text{Ab}}/K)$, and so is its extension to $\hat{\rho}:W_{K}^{\text{Ab}} \to \C^{\ast}$. However, $\hat{\rho}$ being locally constant is equivalent to $\rho$ being continuous a continuous map, where $\C^{\ast}$ inherits the usual Archimedian topology. Running this same argument in reverse shows the converse direction.
	
We now prove the naturality of the isomorphisms. For any algebraic field extension $L/K$, note that the norm map $N_{L/K}:L^{\ast} \to K^{\ast}$ induces a morphism
\[
\Acal_1(K) = \Grp(K^{\ast},\C^{\ast}) \xrightarrow{N_{L/K}^{\ast}} \Grp(L^{\ast},\C^{\ast}) = \Acal_1(L)
\]
by pre-composition. Similarly, there is a unique morphism $W_L \to W_K$ given essentially via the restriction $\Gal(\overline{L}/L) \to \Gal(\overline{K}/K)$ and noticing this is compatible with the restriction $\Gal(\overline{\ell}/\ell) \to \Gal(\overline{k}/k)$. It is the dashed arrow $\alpha$ in the commuting cube (where the bottom arrows going inwards exist becuase $\overline{L} = \overline{K}$ and $\overline{\ell} = \overline{k}$)
\[
\begin{tikzcd}
 & W_K \ar[dd] \ar[rr] & & \langle \operatorname{Fr}_k \rangle \ar[dd] \\
W_L \ar[dd] \ar[rr, crossing over] \ar[ur, dashed]{}{\exists!\alpha} & & \langle \operatorname{Fr}_{\ell} \rangle  \ar[ur, swap]{}{\cong} \\
 & \Gal(\overline{K}/K) \ar[rr] & & \Gal(\overline{k}/k) \\
\Gal(\overline{L}/L) \ar[ur] \ar[rr] & & \Gal(\overline{\ell}/\ell) \ar[ur] \ar[from = 2-3, to = 4-3, crossing over]
\end{tikzcd}
\]
where $k$ and $\ell$ are the residue fields of $K$ and $L$, respectively, and where both the front and back-most faces of the cube are pullback squares. This impiles that there is a similar map (also called $\alpha$ by abuse of notation) $\alpha:W_L^{\text{Ab}} \to W_K^{\text{Ab}}$. These functions work by taking a pair $(\sigma,n)$ in $W_L$ (or the Abelianization) and sending it to the pair $(\tilde{\sigma},n)$ where
\[
\tilde{\sigma} = \sigma|_{K}
\]
and where
\[
(\operatorname{Fr}_{\ell})|_{k} = \operatorname{Fr}_k.
\]
This gives rise to the map
\[
\Gcal_1(K) = \Top(\Grp)(W_K^{\text{Ab}},\C^{\ast}) \xrightarrow{\alpha^{\ast}} \Top(\Grp)(W_L^{\text{Ab}},\C^{\ast}) = \Gcal_1(L).
\]

For what follows, let $\theta_K:\Acal_1(K) \to \Gcal_1(K)$ and $\theta_L:\Acal_1(L) \to \Gcal_1(L)$ be the isomorphisms constructed in the first half of the proof. To show that the diagram
\[
\xymatrix{
	\Acal_1(K) \ar[r]^{\theta_K} \ar[d]_{N_{L/K}^{\ast}} & \Gcal_1(K) \ar[d]^{\alpha^{\ast}} \\
	\Acal_1(L) \ar[r]_{\theta_L} & \Gcal_1(L)
}
\]
commutes , we will show that for any representation $\rho \in \Acal_1(K)$, both $(\alpha \circ \theta_K)(\rho)$ and $(\theta_L \circ N_{L/K}^{\ast})(\rho)$ have the same action on $W_{L}^{\text{Ab}}$. For this fix some $(\sigma,n) \in W_L^{\text{Ab}}$ and note that on one hand
\[
((\theta_L \circ N_{L/K}^{\ast})(\rho))(\sigma,n) = (\rho \circ N_{L/K} \circ \operatorname{Ar}_L^{-1})(\sigma,n) = (\rho \circ N_{L/K})(x),
\]
where $x \in L^{\ast}$ is the element corresponding to $(\sigma,n)$. This means that if $\pi_L \in L$ is a uniformizer in $L$, $x$ is in an $n$-infinitesimal neighborhood of $\pi_L$ (as $(\sigma,n)$ restricts to the $n$-th power of a lift of Frobenius). Because we only need to consider a single $x \in L^{\ast}$ at a time, we can wolog assume that $x \in F$, where $F$ is a finite intermediate extension $L/F/K$. We then find that
\[
N_{L/K}(x) = \left(\prod_{\tau \in \Gal(F/K(x))} \tau(x)\right)^{[F:K(x)]}
\]
so
\begin{align*}
(\rho \circ N_{L/K})(x) &= \rho\big(N_{L/K}(x)\big) = \rho\left(\left(\prod_{\tau \in \Gal(F/K(x))}\tau(x)\right)^{[F:K(x)]}\right) = \rho\left(\prod_{\tau \in \Gal(F/K(x))}\tau(x)\right)^{[F:K(x)]} \\
&= \prod_{\tau \in \Gal(F/K(x))} \rho\big(\tau(x)\big)^{[F:K(x)]}.
\end{align*}
We now compute the other composition. Note that
\[
\big((\alpha^{\ast} \circ \theta_K)(\rho)\big)(\sigma,n) = \big(\alpha^{\ast} \circ\rho \circ \operatorname{Ar}_K^{-1})(\sigma,n) = (\rho \circ \operatorname{Ar}_K^{-1} \circ \alpha)(\sigma,n) = (\rho \circ \operatorname{Ar}_K^{-1})(\tilde{\sigma},n) = \rho(\tilde{x}).
\]

We now must show that $\rho(\tilde{x})$ is the same as the product above it. Now let $e_x$ and $f_x$ be the ramification and residual index of $K(x)/K$, respectively. Note that this implies that
\[
[K(x):K] = e_xf_x
\]
because each of the fields are themselves local fields. A routine check shows that conditions on the restriction of $W_L^{\text{Ab}} \to W_K^{\text{Ab}}$ being simultaneously compatible with restriction $\Gal(\overline{L}/L) \to \Gal(\overline{K}/K)$ (on the ramified side) and $\langle \operatorname{Fr}_{\ell} \rangle \to \langle \operatorname{Fr}_{k}\rangle$ (on the unramified side) implies that
\begin{align*}
\tilde{x} &= \operatorname{Ar}_K^{-1}(\tilde{\sigma},n) = \operatorname{Ar}_K^{-1}(\alpha(\sigma,n)) = \left(\prod_{\tau \in \Gal(F/K(x))} \tau(x)^{1/e_xf_x}\right)^{[F:K]}
= \left(\prod_{\tau \in \Gal(F/K(x))} \tau(x)^{1/[K(x):K]} \right)^{[F:K]} \\ 
&= \left(\prod_{\tau \in \Gal(F/K(x))}\right)^{[F:K]/[K(x):K]} = \left(\prod_{\tau \in \Gal(F/K(x))} \tau(x) \right)^{[F:K(x)]} = N_{L/K}(x) = N_{L/K}(\alpha(\sigma,n)).
\end{align*}
Because of this it follows that
\[
(\theta_L \circ N_{L/K}^{\ast})(\rho) = (\alpha^{\ast} \circ \theta_K)(\rho),
\]
which completes the proof of the Local Langlands Correspondence for $\GL_{1,K}$.
\end{proof}
\begin{corollary}\label{Theorem: Equiv of Rep Cats gives LLC for GL(1)}
	There is an equivalence of categories
	\[
	\mathbf{SRep}(K^{\ast}) \simeq \mathbf{CRep}(W_K^{\text{Ab}}),
	\]
	where $\mathbf{SRep}(K^{\ast})$ is the category of smooth (complex) representations of $K^{\ast}$ and $\mathbf{CRep}(W_K^{\text{Ab}})$ is the category of continuous (complex) representations of $W_K^{\text{Ab}}$.
\end{corollary}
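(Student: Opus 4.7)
The plan is to produce mutually inverse functors between $\mathbf{SRep}(K^{\ast})$ and $\mathbf{CRep}(W_K^{\text{Ab}})$ by pulling back along the local Artin reciprocity isomorphism $\operatorname{Ar}_K: K^{\ast} \to W_K^{\text{Ab}}$, whose topological-group structure is given by Proposition \ref{Prop: WKAb is Kast} and the corollary identifying it with local Artin reciprocity. Concretely, define
\[
F: \mathbf{SRep}(K^{\ast}) \to \mathbf{CRep}(W_K^{\text{Ab}}), \qquad (\pi, V) \mapsto (\pi \circ \operatorname{Ar}_K^{-1}, V),
\]
and the analogous $G: \mathbf{CRep}(W_K^{\text{Ab}}) \to \mathbf{SRep}(K^{\ast})$ via precomposition with $\operatorname{Ar}_K$, both acting as the identity on intertwining operators. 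Since $\operatorname{Ar}_K \circ \operatorname{Ar}_K^{-1}$ and $\operatorname{Ar}_K^{-1} \circ \operatorname{Ar}_K$ are identity maps, once the functors are well-defined on objects and morphisms, the identities $F \circ G = \id$ and $G \circ F = \id$ follow automatically, so the two categories will be isomorphic (and in particular equivalent).

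The main step is to verify well-definedness of $F$ (and symmetrically of $G$). A smooth representation $\pi: K^{\ast} \to \GL(V)$ is, by definition, one with every stabilizer $\operatorname{Stab}_{\pi}(v)$ open in $K^{\ast}$; equivalently, $\pi$ is continuous when $\GL(V)$ carries the discrete topology. A continuous representation $\rho: W_K^{\text{Ab}} \to \GL(V)$ in $\mathbf{CRep}$ is, as in the statement and proof of Theorem \ref{Cor: LLC for padic GL1}, required to have the analogous local-constancy property; by the same dictionary this is equivalent to every vector having an open stabilizer in $W_K^{\text{Ab}}$. Because $\operatorname{Ar}_K$ is an isomorphism of topological groups, open subgroups (and in particular open stabilizers) correspond bijectively under $\operatorname{Ar}_K$, so $\pi$ is smooth if and only if $\pi \circ \operatorname{Ar}_K^{-1}$ is continuous. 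Functoriality on morphisms is immediate: an intertwiner $T: V \to W$ satisfies $T \circ \pi(x) = \pi'(x) \circ T$ for all $x \in K^{\ast}$ if and only if it satisfies $T \circ (\pi \circ \operatorname{Ar}_K^{-1})(y) = (\pi' \circ \operatorname{Ar}_K^{-1})(y) \circ T$ for all $y \in W_K^{\text{Ab}}$, since $\operatorname{Ar}_K^{-1}$ is a bijection.

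The main obstacle, such as it is, lies not in the categorical formalism but in ensuring that the topological equivalence proved pointwise for quasicharacters in Theorem \ref{Cor: LLC for padic GL1} propagates to representations on arbitrary complex vector spaces. This reduces, via Lemma \ref{Lemma: Smooth Irrep for AB group is 1 dim} and the general fact that smoothness and continuity are checked one vector at a time, to the case already handled: for a single $v \in V$, the orbit map $g \mapsto \pi(g)v$ is locally constant on $K^{\ast}$ precisely when the corresponding orbit map $g \mapsto (\pi \circ \operatorname{Ar}_K^{-1})(g) v$ is locally constant on $W_K^{\text{Ab}}$, by the compactness and total disconnectedness arguments applied to $\Iner(K^{\text{Ab}}/K)$ in the proof of Theorem \ref{Cor: LLC for padic GL1}. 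Applying these observations vectorwise completes the verification and yields the asserted equivalence (in fact, isomorphism) of categories.
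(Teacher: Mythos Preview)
Your proof is correct, and in fact takes a more direct route than the paper's. The paper argues via semisimplicity: it asserts that both $\mathbf{SRep}(K^{\ast})$ and $\mathbf{CRep}(W_K^{\text{Ab}})$ are semisimple, reduces to matching isomorphism classes of simple objects, invokes Lemma~\ref{Lemma: Smooth Irrep for AB group is 1 dim} to identify these with $\mathcal{A}_1(K)$ and $\mathcal{G}_1(K)$ respectively, and then cites the bijection of Theorem~\ref{Cor: LLC for padic GL1}. You instead transport the entire representation along the topological group isomorphism $\operatorname{Ar}_K$ of Proposition~\ref{Prop: WKAb is Kast}, checking that open stabilizers correspond under a homeomorphism; this yields an isomorphism of categories outright, with no appeal to semisimplicity and no need to decompose into irreducibles. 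Your approach is cleaner precisely because it sidesteps the (unproved in the paper) claim that these representation categories are semisimple, which for possibly infinite-dimensional $V$ would itself require some care. One small remark: your final paragraph invokes Lemma~\ref{Lemma: Smooth Irrep for AB group is 1 dim} and the compactness argument on $\Iner(K^{\text{Ab}}/K)$, but neither is really needed once you have already observed that smoothness and continuity are both vectorwise open-stabilizer conditions preserved by the homeomorphism $\operatorname{Ar}_K$; that observation alone finishes the job.
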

\begin{proof}
	We begin by recalling that the categories $\mathbf{SRep}(K^{\ast})$ and $\mathbf{CRep}(W_K^{\text{Ab}})$ are semisimple categories. As such, any object $\rho \in \mathbf{SRep}(K^{\ast})_0$ is a direct sum of simple subobjects; however, these simple subobjects are isomorphic to the $1$-dimensional smooth representations of $K^{\ast}$ by Lemma \ref{Lemma: Smooth Irrep for AB group is 1 dim} because $K^{\ast}$ is Abelian. Similarly, any continuous representation $W_K^{\text{Ab}}$ is isomorphic to a direct sum of simple subobjects, which by Lemma \ref{Lemma: Smooth Irrep for AB group is 1 dim} as well corresponds to a $1$-dimensional continuous representation. However, this says that to determine an equivalence of categories as above, we need only establish an equivalence between the categories of isomorphism classes of simple objects of $\mathbf{SRep}(K^{\ast})$ and $\mathbf{CRep}(W_{K}^{\text{Ab}})$. Note that given such a functorial isomorphism, we can extend this to the whole Abelian category $\mathbf{SRep}(K^{\ast})$ (or $\mathbf{CRep}(W_K^{\text{Ab}})$) by extending this assignment linearly through direct sums and then functorially along the isomorphisms.
	
	We now proceed to give a more explicit description of the isomorphism classes of the simple objects of each category. Using Lemma \ref{Lemma: Smooth Irrep for AB group is 1 dim} on $\mathbf{SRep}(K^{\ast})$ gives that
	\[
	\mathbf{SRep}(K^{\ast})_{/\text{iso}}^{\text{simple}} \cong \Grp(K^{\ast},\C^{\ast}) = \Acal_1(K)
	\]
	while using it on $\mathbf{CRep}(W_K^{\text{Ab}})$ gives
	\[
	\mathbf{CRep}(W_K^{\text{Ab}}, \C^{\ast})_{/\text{iso}}^{\text{simple}} \cong \Top(\Grp)(W_K^{\text{Ab}},\C^{\ast}) = \Gcal_1(K).
	\]
	
	We now use the isomorphism of sets $\Acal_1(K) \cong \Gcal_1(K)$ described in Theorem \ref{Cor: LLC for padic GL1}. This gives the desired isomorphism
	\[
	\mathbf{SRep}(K^{\ast})_{/\text{iso}}^{\text{simple}} \cong \mathbf{CRep}(W_K^{\text{Ab}})_{/\text{iso}}^{\text{simple}}
	\]
	which may then be lifted to the desired equivalence $\mathbf{SRep}(K^{\ast}) \simeq \mathbf{CRep}(W_K^{\text{Ab}})$. This completes the proof of the theorem.
\end{proof}

\bibliographystyle{amsplain}
\bibliography{LCFTbib}
\nocite{*}

\end{document}